\newtheorem{prop}{Proposition}[section]
\newtheorem*{defi*}{Definition}
\newtheorem{defi}[prop]{Definition}
\newtheorem{lem}[prop]{Lemma}
\newtheorem{rem}[prop]{Remark}
\newtheorem{thm}[prop]{Theorem}
\newtheorem{coro}[prop]{Corollary}
\newcommand{\cb}{{\mathcal B}}
\newcommand{\ce}{{\mathcal E}}
\newcommand{\cp}{{\mathcal P}}
\newcommand{\calr}{{\mathcal R}}  
\newcommand{\ct}{{\mathcal T}}
\newcommand{\cu}{{\mathcal U}}   
\newcommand{\cv}{{\mathcal V}}
\newcommand{\cw}{{\mathcal W}}
\newcommand{\R}{\mathbb{R}}
\newcommand{\al}{{\alpha}}
\newcommand{\ex}{\mbox{\sf Exc}_{\cal U}}
\numberwithin{equation}{section}
\begin{document}

\noindent
{\Large\bf Path continuity of Markov processes and locality of\\[1mm] 
Kolmogorov operators
}\\

\noindent
Lucian Beznea\footnote{Simion Stoilow Institute of Mathematics  of the Romanian Academy,
 Research unit No. 2, 
P.O. Box \mbox{1-764,} RO-014700 Bucharest, Romania, and  University of Bucharest, 
Faculty of Mathematics and Computer Science (e-mail: lucian.beznea@imar.ro)},
Iulian C\^{i}mpean\footnote{ University of Bucharest, Faculty of Mathematics and Computer Science, and Simion Stoilow Institute of Mathematics of the Romanian Academy, Research unit No. 2, P.O. Box \mbox{1-764,} RO-014700 Bucharest Romania (e-mail: iulian.cimpean@unibuc.ro; iulian.cimpean@imar.ro)}
and 
Michael R\"ockner\footnote{Fakult\"at f\"ur Mathematik, Universit\"at Bielefeld,
Postfach 100 131, D-33501 Bielefeld, Germany, and Academy for Mathematics and Systems Science, CAS, Beijing 
(e-mail: roeckner@mathematik.uni-bielefeld.de)}
\\[5mm]

\paragraph{Abstract.} We prove that if we are given a generator of a c\` adl\` ag Markov process 
and an open domain $G$ in the state space, on which the generator has the local property expressed in a suitable way on a class $\mathcal{C}$ of test functions that is sufficiently rich,  
then the Markov process  has continuous paths when it passes through $G$. 
The result holds for any Markov process which is associated with the generator merely on $\mathcal{C}$. 
This points out that the path continuity of the process is an a priori property encrypted by the generator acting on enough test functions, and this property can be easily checked in many situations. 
The approach uses potential theoretic tools and covers Markov processes associated with (possibly time-dependent) second order integro-differential operators (e.g., through the martingale problem)  defined on domains in Hilbert spaces or on spaces of measures.
\\[2mm]

\noindent
{\bf Keywords:} 
Diffusion; 
Local operator; 
Right process; 
Fine topology;
Markov semigroup;
Dirichlet form;
Branching process.
\\

\noindent
{\bf Mathematics Subject Classification (2010):} 
60J45,  	
60J35,    	
60J40,  	
60J57,  	
31C25,      
47D07,  	
60J25,      
60J60.      

\tableofcontents

\section{Introduction}

Since Fick and Einstein, diffusion has been for
more than a century and a half a key concept in many fields, from physics to biology or finance. 
It is used to describe the transition of anything like atoms, molecules, particles etc, from a region of higher concentration to a region of lower concentration, in a continuous way; see the “celebration” paper \cite{Ph05}. 
Formally, in this paper, by a diffusion process we understand a Markov process with almost surely
continuous trajectories. 

In general, diffusion Markov processes correspond to second order elliptic
differential operators (see \cite{Fe54} or the monograph \cite{Ba98}). 
However, starting from a given differential operator or a semigroup of Markov operators, it is a highly non-trivial task to rigorously show that one can construct an associated Markov process with continuous paths. 
A seminal contribution to this subject was given by D. Ray in \cite{Ra56}, proving a conjecture of W. Feller stated in \cite{Fe54} which
asserts that on the real line, if $(P_t)_{t\geq 0}$ is a Feller transition semigroup of a (c\'adl\'ag) strong Markov
process, then the Lindeberg-type condition $P_t(x,\mathbb{R}\setminus(x-\varepsilon,x+\varepsilon))=o(t)$ uniformly in $x$ on compact sets for each $\varepsilon>0$ ensures the a.s. continuity of the trajectories (see also \cite{He81}). 
This result has also been extended to locally compact spaces, as e.g. in \cite[Proposition 9.10]{BlGe68}; see also \Cref{prop:Getoor} below for the precise statement.
A characterisation of the Lindeberg-type condition in terms of the associated "superharmonic function" (the local truncation property)
was given in \cite{BlHa86}, Proposition 8.2, in the frame of the balayage spaces.
Anyway, there are many situations where such a result can not be
applied, e.g. when the underlying space in not locally compact or the semigroup is not Feller. 
Another typical and general situation where more sophisticated tools have to be used in order to construct an associated Markov process with
continuous paths occurs when we start from the Kolmogorov operator defined merely on a class of test functions, e.g. when the latter is associated with a stochastic (partial)
differential equations with singular coefficients (e.g. like those considered in \cite{DaRo02} or \cite{BeCiRo20}). 
When a convenient duality measure exits, the theory of Dirichlet form offers powerful tools in this respect.
More precisely, it is well known that in the case of (non-symmetric) Dirichlet forms, the associated Markov process is a diffusion if and only if the form is local, i.e. the relative energy of any two elements from the energy space with disjoint compact supports is zero, see e.g. the monographs \cite{FuOsTa11} or \cite{MaRo92}.
This result was further extended to generalized (non-sectorial) Dirichlet forms in \cite[Proposition 1.10]{St99a}, \cite[Theorem 3.3]{Tr03}, and \cite[Proposition 3.6]{LeStTr22} under suitable conditions. 
At the core of the Dirichlet forms approach stand the orthogonal decomposition of elements from the domain
of the form using the hitting distribution (the balayage operator, in potential theoretic terms), as well as the strong duality theory existing in such energetic spaces.

Having in mind the above mentioned results, it seems somehow frustrating that, on the one hand, just by looking at a second order operator defined on a class of test functions one can easily guess if an associated c\`adl\`ag process should have continuous paths, and on the other hand, the above established results in this direction require a lot of structure and regularity for the semigroup or the generator to ensure what at an intuitive level might seem obvious.
As a matter of fact, following another approach, namely the one developed by D. Bakry and M. \'Emery in \cite{BaEm85}, it is indeed possible to analyse the path continuity of a Markov process in a more direct way, as soon as it corresponds to an operator (through the martingale problem) defined on a class of test functions whose square field operator satisfies the so called {\it derivation property}.
However, the existence of the square field operator requires a specific algebraic structure. 
In particular, the class of test functions needs to be an algebra which is invariant under the action of the operator, and this usually leads to restrictions on the coefficients.

The previous discussion raises the following fundamental problem which we address in this paper: Suppose that we are given a second order operator ${\sf L}$ defined merely on a class of test functions $\mathcal{D}_0$, which is associated to a c\`adl\`ag Markov process $X$, on a general topological space, with transition function $(P_t)_{t\geqslant 0}$ e.g.,  
through the corresponding martingale problem.
Further, suppose that $({\sf L},\mathcal{D}_0)$ exhibits locally a local character in the informal sense that ${\sf L}u$ vanishes where $u$ vanishes for $u\in \mathcal{D}_0$, but merely on a fixed open set $G\subset E$, hence $\sf L$ could also be an integro-differential operator whose non-local part acts only outside $G$. 
Given $({\sf L}, \mathcal{D}_0,X,G)$ as above, can we decide that $X$ has continuous paths when it lies in $G$?
Note that in the above context it is not assumed that $\mathcal{D}_0$ is a core or that $L$ is associated to a nice Dirichlet space. 
Also, $\sf L$ is allowed to have irregular coefficients, so that $\mathcal{D}_0$ is not necessarily invariant under the action of $\sf L$. 
In a nutshell, we show that if $\mathcal{D}_0$ is rich enough (yet not necessarily a core) and $(P_t)_{t\geqslant 0}$ has some minimal regularity so that $X$ is at least strong Markov, then the answer is affirmative.
To this end, we first adopt a general $L^p$-approach (see the main result \Cref{thm 2.2} below), and then we show that, in fact, this approach leads to a similar result \Cref{coro 2.6} which does not require an $L^p$-context, but only the process, its transition function, and a choice of the underlying topology.
We show that the obtained theoretical results are applicable to large classes of examples that cover Markov processes associated (e.g. through the martingale problem) to (possibly time-dependent) second order integro-differential operators defined on domains in Hilbert spaces or on spaces of measures.

The structure of the paper is the following: In \Cref{s:preliminaries} we introduce the context and present some preliminary results, most of them being of self interest. 
More precisely, we discuss the problem of existence of a regular copy of a given simple c\`adl\`ag Markov process, the concept of diffusion and related potential theoretic tools, and some considerations on infinitesimal generators for resolvents on $L^p$-spaces.
\Cref{s:main} is devoted to the main theoretical results, namely \Cref{thm 2.2}, \Cref{coro 2.6}, \Cref{coro:nonh}, and \Cref{coro:D_m}.
Then, in \Cref{s:applications} we apply the theoretical results to a large class of examples: Jump-Diffusions on domains in Hilbert spaces, measure-valued branching processes, diffusions associated with generalized Dirichlet forms, and even diffusions associated with not necessarily quasi-regular semigroups.
Finally, we also included an Appendix aimed to rapidly introduce the reader to the main potential theoretical notions and results that are employed in the main body of this paper.

We would like to end the introduction by pointing out a subtle aspect: In contrast to the framework of Dirichlet forms which are first order objects and cover operators in both divergence and non-divergence form, the present analysis is designed mainly for operators ${\sf L}$ in non-divergence form. 
In favour of the large class of herein considered applications, this is a small price we accepted to pay. 
Nevertheless, our results definitely apply also to generalized Dirichlet forms rendering simple conditions to check the path continuity of the associated process, as done in \Cref{coro:GDF}

\section{The framework and preliminary results}\label{s:preliminaries}

To achieve our main results from \Cref{s:main}, we need several results of general nature, which in our opinion are of self-interest. 
More precisely, let us address in the sequel three preliminary topics: \;1) \textit{the problem of existence of a regular copy of a given simple c\`adl\`ag Markov process;} \; 2) \textit{the concept of diffusion and related potential theoretic tools;} \; 3) \textit{resolvents of kernels and some considerations on their infinitesimal generators on $L^p$-spaces.} 

\paragraph{From Markov processes with c\`adl\`ag paths to right processes.} 
Let $(E,\tau)$ be a Lusin topological spaces whose Borel $\sigma$-algebra is denoted by $\mathcal{B}:=\mathcal{B}(E)$.
For each $x\in E$, let $(X_t^x)_{t\geq 0}$ be a c\`adl\`ag temporally homogeneous Markov process defined on $(\Omega, \mathcal{F},\mathcal{F}_t,\mathbb{P}^{x})$, with transition function $(P_t)_{t\geq 0}$ and initial distribution $\delta_x$; just for the sake of generality, here $\Omega, \mathcal{F} \mbox{ or } \mathcal{F}_t$ may as well depend on $x$. 
We denote by $\mathcal{U}=(U_\alpha)_{\alpha>0}$ the corresponding resolvent family of Markov kernels, namely
\begin{equation*}
    U_\alpha f(x) = \int_0^\infty e^{-\alpha t} P_tf(x)\;dt \quad \mbox{for all }f\in b\mathcal{B}, x\in E \mbox{ and } \alpha>0.
\end{equation*}
As announced, the main goal of this paper is to study when $X:=((X^x_t)_{t\geq 0}, x\in E)$ has continuous paths. 
However, a preliminary question we wish to address here is whether one can construct (and hence work with) a more regular copy of $X$, namely a right process sharing the same finite dimensional distributions, or equivalently, having resolvent $\mathcal{U}$. 
If this can be done, the probabilistic potential theory would then be in force. 
In particular, the measurability properties of hitting times, excessive functions, and hitting distributions (or balayage operators) would be guaranteed, as well as the strong Markov property.
To do so, we consider the following hypothesis, which is quite natural to impose in order to guarantee the strong Markov property, especially if one reads it as {\it generalized Feller property}; in fact, it is essentially the one from \cite{BeRo11a}, page 846.

\vspace{0.2 cm}
\noindent
{$\mathbf{(H_0)}$}  There exists a vector lattice $\mathcal{C}\subset C_b(E)$ such that
\begin{enumerate}[(i)]
\item $1 \in \mathcal{C}$ and there exists a countable subset in $\mathcal{C}$ which separates the points of $E$.
\item We have ${U}_\beta f \in \mathcal{C}$  for all $f \in \mathcal{C}$ and $\beta >0$.
\end{enumerate}

\begin{prop} \label{prop2.1}
If $X$ and $\mathcal{U}$ are as above and $\mathbf{(H_0)}$ is satisfied then there exists a right process on $X'$ on $E$ which is c\`adl\`ag with respect to $\tau$, sharing the same resolvent $\mathcal{U}$. 
In particular, $X$ and $X'$ have the same law on the Skorokhod space of all c\`adl\`ag paths from $[0,\infty)$ to $E$.

\medskip
\noindent{Proof in \Cref{pf:1}.}
\end{prop}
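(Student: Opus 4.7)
The plan is to produce $X'$ by a Ray--Knight type compactification driven by the vector lattice $\mathcal{C}$, as in \cite{BeRo11a}, and then to check that the resulting process actually lives inside $E$ and has the same law on path space as $X$. Hypothesis $\mathbf{(H_0)}$ is tailor-made for this: (i) provides a countable family of continuous functions separating the points of $E$, while (ii) says that the resolvent $\mathcal{U}$ leaves $\mathcal{C}$ invariant. After enlarging $\mathcal{C}$ to the smallest convex cone $\mathcal{R}\subset C_b(E)$ that contains $\mathcal{C}\cup\{1\}$, is stable under $\vee$ and $\wedge$ and under positive linear combinations, and is $U_\beta$-invariant for every $\beta>0$, the cone $\mathcal{R}$ plays the role of a Ray cone that still countably separates the points of $E$.

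Picking a countable separating family inside $\mathcal{R}$ realises $E$ as a Borel subset of a metrisable compactification $\bar{E}$, on which $\mathcal{U}$ extends uniquely by continuity to a Ray resolvent $\bar{\mathcal{U}}$. Classical Ray theory, in the form used in \cite{BeRo11a}, then yields a right process $\bar{X}$ on $\bar{E}$ with resolvent $\bar{\mathcal{U}}$, whose paths are càdlàg for the Ray topology. Since $X$ and $\bar{X}$ have the same resolvent, for every $x\in E$ and every $t>0$ one has $\bar{X}^x_t\in E$ almost surely; combining this with (i) and with the fact that the original $X$ is a $\tau$-càdlàg modification one upgrades this statement to the pathwise one, namely that $\bar{X}^x$ never visits $\bar{E}\setminus E$. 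Restricting $\bar{X}$ to $E$ then produces the desired right process $X'$, and unwinding the Ray topology back to $\tau$, possible because $\mathcal{C}\subset C_b(E,\tau)$, shows that $X'$ is $\tau$-càdlàg.

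Equality of laws on the Skorokhod space $D([0,\infty),E)$ is then nearly automatic: both $X^x$ and ${X'}^x$ are $\tau$-càdlàg processes starting at $x$ with transition function $(P_t)$, so a standard monotone class argument over cylinder sets identifies their finite-dimensional distributions, and these determine the laws on $D([0,\infty),E)$.

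The step I expect to be the main obstacle is precisely the confinement of the Ray process $\bar{X}$ to $E$ together with the upgrade from Ray-càdlàg to $\tau$-càdlàg paths. Having $\bar{X}^x_t\in E$ for each fixed $t$ is cheap, but the pathwise statement requires exploiting both the countable separation in (i) and the a priori $\tau$-regularity of the given $X$; this is exactly the configuration encoded in \cite{BeRo11a}, and invoking that reference (or replaying the short monotone-class / negligible set argument behind it) should be the cleanest route.
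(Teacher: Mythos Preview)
Your overall strategy matches the paper's: both build a right process on an enlarged space via the machinery of \cite{BeRo11a} (the paper invokes \cite[Proposition 2.1]{BeRo11a} together with the saturation construction, Theorem~4.15 of the Appendix), and then argue that this process is actually confined to $E$ with $\tau$-c\`adl\`ag paths.

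The difference lies precisely in the step you flagged as the main obstacle. You propose to first get $\bar{X}^x_t\in E$ for each fixed $t$ and then ``upgrade'' to the pathwise statement. The paper avoids this two-step route entirely and argues directly at the level of path laws: restrict both $X$ and $X'$ to the countable set $D$ of dyadic times and view them as random elements of the product space $(E')^D$. By \cite[Chap.~IV, pp.~91--92]{DeMe78}, the set $D_E$ of restrictions to $D$ of $\tau$-c\`adl\`ag $E$-valued paths is \emph{measurable} in $(E')^D$. Finite-dimensional distributions determine laws on $(E')^D$, and since $X$ and $X'$ share the same resolvent (hence the same transition function on $E$), their laws there coincide. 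As the law of $X$ is supported on $D_E$ by assumption, so is that of $X'$. Finally, since the larger-space topology satisfies $\tau'|_E\subset\tau$ and $X'$ is $\tau'$-right-continuous, its full paths must agree with the $\tau$-c\`adl\`ag extensions from dyadics, forcing $X'$ to live in $E$ and be $\tau$-c\`adl\`ag. This bypasses any fixed-$t$/negligible-set gymnastics, and the Dellacherie--Meyer measurability result is the key technical input you were missing.
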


\begin{rem}
\begin{enumerate}[(i)]
\item If $\mathcal{U}$ is (Lipschitz) Feller, i.e. for some (hence all) $\alpha >0$ it holds that $U_\alpha$ maps bounded (Lipschitz) continuous functions to (Lipschitz) continuous functions on $E$, then $\mathbf{(H_0)}$ is clearly satisfied. 
\item If $\mathbf{(H_0)}$ is fulfilled, a major benefit of Proposition \ref{prop2.1} is that the path continuity properties for a simple c\`adl\`ag Markov process can be in fact studied for a more regular version of it, for which potential theoretical tools are available. 
\item It is possible to ensure directly the existence of a c\'adl\'ag right process given a transition function, by means of potential theoretic tools. 
For the reader convenience we included one such result in Appendix, \Cref{thm:cadlag}.
\end{enumerate}
\end{rem}


\paragraph{Right processes and diffusions.}
From now on, throughout this section, we assume that $X=(\Omega, \mathcal{F}, \mathcal{F}_t, X_t, \theta_t, \mathbb{P}^x)$ is a right Markov process on a Lusin topological space $(E,\tau)$ with Borel $\sigma$-algebra $\mathcal{B}$, which is  c\`adl\`ag  w.r.t. $\tau$; in particular, $\tau$ is a natural topology on $E$. 
The lifetime and cemetry point of the process are denoted by $\zeta$ and $\Delta$, respectively; we extend any function $u:E\rightarrow \mathbb{R}$ to $E\cup \{\Delta\}$ by setting $u(\Delta)=0$.
The resolvent of $X$ is denoted by $\mathcal{U}=(U_\alpha)_{\alpha>0}$; if $\beta>0$, we set $\mathcal{U}_\beta:=(U_{\alpha+\beta})_{\alpha>0}$.
The first hitting time of a set $A \in \mathcal{B}$ by the process $X$ is defined by 
$$T_A := \inf \{ t > 0 : X_t \in A \}.$$

Recall that $m$ is called a {\it reference} measure if $m(A)=0$ implies that $A$ is $\mathcal{U}$-negligible, i.e. $U_1(1_A)\equiv 0$.
It is easy to check that if $\mathcal{U}$ is {\it strong Feller} (i.e. $U_\alpha (b\mathcal{B})\subset C_b(E)$ for one (hence all) $\alpha>0$) and $supp(m)=E$, then $m$ is a reference measure.

The following result is a main tool to show that if a process has continuous paths except some $m$-negligible set, then it has automatically continuous paths except some $m$-innesential set.

\begin{prop} \label{prop 2.4} 
The following assertions hold.
\begin{enumerate}[(i)]
\item The function $v(x):= \mathbb{P}^{x} (\{\omega : [0, \zeta(\omega)) \ni t \longmapsto X_t(\omega) \mbox{ is not continuous} \} )$, $x\in E$, is  $\mathcal{U}$-excessive.
\item If
$\mathbb{P}^{x}(\{\omega : [0, \zeta(\omega)) \ni t\mapsto X_t(\omega) \mbox{ is continuous}\})=1 \;\; \mathcal{U}\mbox{-a.e.},
$
then the equality holds for all $x\in E$.

In particular, if $m$ is a reference measure and the above equality holds $m$-a.e., then it holds for all $x\in E$.
\item If $m$ is a $\sigma$-finite measure on $E$ and
$$
\mathbb{P}^{x}(\{\omega : [0, \zeta(\omega)) \ni t \longmapsto X_t(\omega) \mbox{ is continuous}\})=1 \;\; m\mbox{-a.e.},
$$
then the equality holds $m$-q.e. 
\end{enumerate}

\medskip
\noindent{Proof in \Cref{pf:1}.}
\end{prop}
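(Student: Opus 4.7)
The plan is to verify the three required properties of $v$ in turn, exploiting the Markov/strong Markov property together with fine-topological tools from the Appendix. For part (i), the key observation comes from conditioning on $\mathcal{F}_t$: since $v(y)=\mathbb{P}^y(\{[0,\zeta)\ni s\mapsto X_s\text{ is not continuous}\})$, the Markov property (applied to the shifted path after time $t$) identifies
\[
P_t v(x) \;=\; \mathbb{E}^x\bigl[v(X_t)\bigr] \;=\; \mathbb{P}^x\bigl(\{\omega:\,[t,\zeta(\omega))\ni s\mapsto X_s(\omega)\text{ is not continuous}\}\bigr),
\]
which is trivially $\leq v(x)$. As $t\downarrow 0$ these events increase to the event of non-continuity on $(0,\zeta)$, and this differs from non-continuity on $[0,\zeta)$ only via possible failure of right-continuity at $0$, which is $\mathbb{P}^x$-a.s.\ excluded by the c\`adl\`ag assumption. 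This gives simultaneously the supermedian inequality and $P_t v\uparrow v$, hence $\alpha U_\alpha v\uparrow v$ by Laplace inversion, so $v$ is $\mathcal{U}$-excessive.

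Part (ii) is then an instance of the general fact that an excessive function vanishing $\mathcal{U}$-a.e.\ vanishes identically: if $N:=\{v>0\}$ is $\mathcal{U}$-negligible, then $U_\alpha(x,N)=0$ for every $x\in E$ and every $\alpha>0$, so $\alpha U_\alpha v(x)=0$ pointwise, and letting $\alpha\to\infty$ in $\alpha U_\alpha v\uparrow v$ yields $v\equiv 0$. The reference-measure statement is then immediate from the very definition: $m$-null sets are $\mathcal{U}$-negligible.

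The genuinely subtle case is part (iii), where only $m$-a.e.\ vanishing is available and potential theory must do real work. The strategy is to show that $\{v>0\}$ is $m$-polar. Writing $N_n:=\{v>1/n\}$, each $N_n$ is nearly Borel and finely open by the excessivity of $v$, so $X_{T_{N_n}}\in N_n$ on $\{T_{N_n}<\zeta\}$, and the supermartingale property of $(v(X_t))$ under $\mathbb{P}^x$ combined with optional sampling yields
\[
v(x)\;\geq\;\mathbb{E}^x\!\left[v(X_{T_{N_n}});\,T_{N_n}<\zeta\right]\;\geq\;\tfrac{1}{n}\,\mathbb{P}^x(T_{N_n}<\zeta).
\]
Integrating against $m$ and using the hypothesis $\int v\,dm = 0$ forces $\mathbb{P}^x(T_{N_n}<\zeta)=0$ for $m$-a.e.\ $x$, and taking countable union over $n$ identifies $\{v>0\}$ as $m$-polar, which is the desired $m$-q.e.\ statement.

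I expect the main technical obstacle to lie in part (iii), specifically in justifying the pointwise lower bound $v(X_{T_{N_n}})\geq 1/n$ on $\{T_{N_n}<\zeta\}$: because $N_n$ is only finely open and not topologically open in general, one cannot argue by elementary path inspection, and must instead invoke the fine lower semi-continuity of excessive functions together with the behaviour of a right process at hitting times of finely open nearly Borel sets, as collected in the Appendix. Once this is in hand, the remainder is a Fubini-plus-optional-sampling computation with no further subtleties.
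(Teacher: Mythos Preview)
Your argument is correct in all three parts, and for (i) and (ii) it is exactly the approach the paper uses (the paper simply cites \cite{BeRo11a}, Proposition~5.1 for (i), and invokes Remark~\ref{rem 2.2}\,(i) for (ii), which is precisely your observation that a $\mathcal{U}$-negligible level set of an excessive function must be empty).

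For (iii) your route is somewhat more direct than the paper's. The paper argues in one line: since $v$ is excessive, the set $[v>0]$ is finely open and $m$-negligible, hence $m$-polar by Remark~\ref{rem 2.2}\,(ii). You instead prove $m$-polarity by hand via the supermartingale inequality $v(x)\ge \tfrac1n\,\mathbb{P}^x(T_{N_n}<\zeta)$ on the level sets $N_n=[v>1/n]$. Your approach has the advantage of being entirely self-contained and of not requiring the absolute-continuity hypothesis on $m$ that appears in the statement of Remark~\ref{rem 2.2}\,(ii); in exchange you have to verify $v(X_{T_{N_n}})\ge 1/n$ on $\{T_{N_n}<\zeta\}$, which as you correctly anticipate follows from the right-continuity of $t\mapsto v(X_t)$ for excessive $v$. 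One small point: concluding ``$m$-polar'' is not literally the same as ``$m$-q.e.''; you should add that $[v>0]$ is also $m$-negligible and then invoke Remark~\ref{rem 2.2}\,(iii) to pass to an $m$-inessential set.
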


\begin{defi} \label{defi:IG}
Let $G \subset E$ be an open set.
For a sequence $G_n \subset G, n \geq 1$ of open subsets such that $\overline{G}_n\subset G_{n+1}, n\geq 1$, consider the following sequence of (pairs of) stopping times $(S_n^{k},T_n^{k})_{k\geq 1}$:
\begin{enumerate}
\item[] $S_n^1:=T_{G_n}, \;\;\mbox{the first hitting time of } G_n$, 
\item[] $T_n^1 := S_n^1+T_{E\setminus G}\circ \theta_{S_n^1}, \; \; \mbox{the first exit time from } G \mbox{ after } S_n^1$,
\item[] $S_n^k:=T_n^{k-1}+T_{G_n}\circ \theta_{T_n^{k-1}},\;\;\mbox{the first hitting time of } G_n \mbox{ afer } T_n^{k-1}$,
\item[] $T_n^k := S_n^k+T_{E\setminus G}\circ \theta_{S_n^k}  \; \; \mbox{the first exit time from } G \mbox{ after } S_n^k$.
\end{enumerate}
We define
\begin{align*}
I^n_G&:=(S_n^{1}, T_n^{1}) \cup (S_n^{2}, T_n^{2}) \cup \cdots \cup (S_n^{k}, T_n^{k})\cup \cdots\\
I_G&:=\mathop{\cup}\limits_n I^n_G.
\end{align*}
One can easily check that the definition of $I_G$ does not depend on the choice of the covering $G_n,n\geq 1$.
\end{defi}

\begin{rem} 
\begin{enumerate}[(i)]
    \item Note that $I^n_G\subset I^{n+1}_G, n\geq 1$ and that for each $\omega\in \Omega$, $I_G(\omega)$ is an open set in $[0,\infty)$ representing the union of all \textit{intervals of time on which the trajectory $X_{\cdot}(\omega)$ lies in $G$}. 
    \item Clearly, if $G=E$ then $I_G=(0,\infty)$.
    \item The reason of approximating $G$ from inside by $G_n,n\geq 1$ is the following: Suppose for simplicity that $G$ is a ball in $\mathbb{R}^d$ and $X$ is a $\mathbb{R}^d$-valued Brownian motion. Then $\partial G$ is made up of points which are regular for both $G$ and $\mathbb{R}^d \setminus G$, so that $T_{\mathbb{R}^d\setminus G} \circ \theta_{T_G}=0$ and the interval $(T_G,T_G+T_{\mathbb{R}^d\setminus G} \circ \theta_{T_G} )$ would be empty. However, this degeneracy can be avoided by counting the time spent in $G$ after the process (re)enters $G$, strictly, as when considering the above approximation of $G$ by $G_n,n\geq 1$. 
\end{enumerate}
\end{rem}

Further, by $G^r$ we be denote the set of all {\it regular} points of $G$, i.e. $x\in G^r$ if $\mathbb{P}^{x}(T_G=0)=1$.

The following definition settles the notion of diffusion which we shall use for the rest of the paper.
\begin{defi} \label{defi:diffusion}
Let $G\subset E$ be open. 
\begin{enumerate}[(i)]
\item For $x\in E$, we say that $X$ is a diffusion in $G$ under $\mathbb{P}^{x}$ if
\begin{equation} \label{diffusion}
\mathbb{P}^{x}(\{\omega : \emptyset \neq I_G(\omega)\cap [0, \zeta(\omega)) \ni t \longmapsto X_t(\omega) \mbox{ is continuous}\})=1.
\end{equation}
\item If (\ref{diffusion}) holds for all $x \in E$ ($\mathcal{U}$-a.e., $m$-a.e, resp. $m$-q.e.) then we say that $X$ is a diffusion in $G$ ($\mathcal{U}$-a.e., $m$-a.e, resp. $m$-q.e.).
\item When $G=E$ then in (i) and (ii) we simply say \say{diffusion} instead of \say{diffusion in $E$}.
\end{enumerate}
\end{defi}

If $G\subset E$ is (finely) open, let $X^G:=((X^G_t)_{t\geq 0},\mathbb{P}^x,x\in G)$ denote the right process on $G$ obtained by {\it killing} $X$ upon leaving the set $G$, given by
$$
X^G_t:=
\begin{cases}
X_t, &\mbox{if } t<T_{E\setminus G}\wedge \zeta\\
\Delta, & \mbox{otherwise}
\end{cases}.
$$
We remark that if we set 
$$G':=G^r\setminus (E\setminus G)^r,$$
then $G'$ is the largest finely open set such that $G\subset G'$ densely (w.r.t. the fine topology), in particular $G\subset G'\subset G^r$ and $T_{E\setminus G}=T_{E\setminus G'}$, hence $X^{G'}$ can be regarded as the {\it completion} (or {\it saturation}, in potential theoretic terms) of $X^G$ in $E$ by fine density.

We denote the resolvent of the killed process $X^G$ on $G$ by $\mathcal{U}^G:=(U^G_\alpha)_{\alpha>0}$, hence for all $f\in b\mathcal{B}$
\begin{equation} \label{eq:resolventG}
    U^G_\alpha f(x)=\mathbb{E}^x\left\{\int_0^{T_{E\setminus G}} f(X_t)\; dt \right\}, \quad x\in G.
\end{equation}

\begin{rem}
The right-hand side of \eqref{eq:resolventG} makes sense for all $x\in E$, hence we extend $U^G_\alpha$ to $E$ accordingly. In fact, $U^G_\alpha f$ extends from $G$ to $G'$ by fine continuity, whilst on $E\setminus G'$ it vanishes. Moreover, the following relation holds for all $f\in b\mathcal{B}$ and $\alpha>0$:
    \begin{equation} \label{eq:dynkin}
        U^G_\alpha f=U_\alpha f - B_{E\setminus G}^\alpha U_\alpha f \quad \mbox{ on }E.
    \end{equation}
\end{rem}

The following result is a key tool that one allows to reduce the study the diffusion property of $X$ in $G$, to the study of the diffusion property for the killed process $X^G$.
\begin{prop} \label{prop 2.7}
Let $G\subset E$ be open.
The following assertions are equivalent:
\begin{enumerate}
\item[(i)] The process $X$ is a diffusion in $G$ ($m$-q.e. on $E$ w.r.t. $\mathcal{U}$).
\item[(ii)] The killed process $X^{G}$ is a diffusion ($m$-q.e. on $G$ w.r.t. $\mathcal{U}^G$).
\item[(iii)] The killed process $X^{G'}$ is a diffusion ($m$-q.e. on $G'$ w.r.t. $\mathcal{U}^{G'}$).
\end{enumerate} 

\medskip
\noindent{Proof in \Cref{pf:1}.}
\end{prop}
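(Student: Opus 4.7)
\medskip
\noindent
\textbf{Proof plan.} The strategy is to prove $(\mathrm{i})\Leftrightarrow(\mathrm{ii})$ via the excursion-interval decomposition of $I_G$ combined with the strong Markov property applied at the successive entry times $S_n^k$, and then to derive $(\mathrm{ii})\Leftrightarrow(\mathrm{iii})$ from the fact that $X^{G'}$ is merely the fine-density saturation of $X^G$. In each step, the $m$-q.e.\ upgrade will be obtained by an appropriate invocation of \Cref{prop 2.4} applied to the relevant killed process.

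For $(\mathrm{i})\Rightarrow(\mathrm{ii})$ I would fix $x\in G$ and pick $n$ large enough that $x\in G_n$; then $S_n^1=0$ and $T_n^1=T_{E\setminus G}$ under $\mathbb{P}^x$, and the event ``$t\mapsto X_t$ is continuous on $(S_n^1,T_n^1)\cap [0,\zeta)$'' coincides with the diffusion event for $X^G$ starting from $x$, so (i) forces (ii) pointwise at such $x$. For the converse $(\mathrm{ii})\Rightarrow(\mathrm{i})$ I would apply the strong Markov property at each $S_n^k$: conditionally on $\mathcal{F}_{S_n^k}$ on $\{S_n^k<\infty\}$, the shifted process $(X_{S_n^k+t})_{t\geqslant 0}$ has the law of $X$ starting from $X_{S_n^k}\in G_n\subset G$, so its restriction to $[0,T_{E\setminus G})$ inherits the law of $X^G$ issued from $X_{S_n^k}$; (ii) then makes the trajectory continuous on each $(S_n^k,T_n^k)$ almost surely, and since $I_G$ is the countable union of these intervals, one obtains continuity on $I_G\cap[0,\zeta)$. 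The $m$-q.e.\ refinement is obtained by applying (the analogue of) \Cref{prop 2.4}(iii) to $X^G$, noting that $v^G(y):=\mathbb{P}^y(X^G\ \text{is not a diffusion})$ is $\mathcal{U}^G$-excessive.

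For $(\mathrm{ii})\Leftrightarrow(\mathrm{iii})$ I would observe that $X^G$ and $X^{G'}$ obey the same defining formula and share the common lifetime $T_{E\setminus G}\wedge\zeta=T_{E\setminus G'}\wedge\zeta$, hence for $x\in G$ the two $\mathbb{P}^x$-laws coincide and the two assertions agree on $G$. The remaining points $x\in G'\setminus G$ are fine limits of points from $G$ (as $G\subset G'$ is fine-dense by construction), and the non-diffusion function of $X^{G'}$ is $\mathcal{U}^{G'}$-excessive, hence finely continuous, so its $m$-q.e.\ vanishing on $G$ propagates to $G'$. The main technical obstacle I expect will be the $m$-q.e.\ bookkeeping, in particular the precise comparison of $m$-polar sets with respect to $\mathcal{U}$, $\mathcal{U}^G$ and $\mathcal{U}^{G'}$, and the verification that the excursion-by-excursion argument does not aggregate into an uncountable union of $m$-null events.
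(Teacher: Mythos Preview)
Your plan matches the paper's proof essentially step for step: the pointwise equivalence $(\mathrm{i})\Leftrightarrow(\mathrm{ii})$ via the strong Markov property at the entry times $S_n^k$ (with $X_{S_n^k}\in\overline{G_n}\subset G$ by right continuity), and $(\mathrm{ii})\Leftrightarrow(\mathrm{iii})$ via fine density of $G$ in $G'$ together with fine continuity of the $\mathcal{U}^{G'}$-excessive non-diffusion function $v_{G'}$.

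The one technical point you flag but do not resolve is exactly the crux of the $m$-q.e.\ direction $(\mathrm{ii})\Rightarrow(\mathrm{i})$: an $m$-inessential set for $\mathcal{U}^G$ need not be $m$-inessential for $\mathcal{U}$, so one cannot simply transport the exceptional set. The paper handles this as follows. Since $v_G$ is $\mathcal{U}^G$-excessive, the set $[v_G>0]$ is finely open for $\mathcal{U}^G$; but any subset of $G$ that is finely open for $\mathcal{U}^G$ is also finely open for $\mathcal{U}$ (the fine topology on $G$ induced by the killed process agrees with the trace of the $\mathcal{U}$-fine topology). Hence $[v_G>0]$ is $\mathcal{U}$-finely open and $m$-negligible, therefore $m$-polar for $\mathcal{U}$ by \Cref{rem 2.2}(ii), and thus contained in an $m$-inessential set for $\mathcal{U}$. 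This is precisely the ``comparison of $m$-polar sets'' step you anticipated; once you insert it, your argument is complete. Your worry about an uncountable union of null events is unfounded: the excursion intervals $(S_n^k,T_n^k)$ are countably indexed, so a countable union suffices.
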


Let us conclude this paragraph with the following useful result. It follows by combining \Cref{prop 2.7} with \Cref{prop 2.4}, (iii), so we skip its formal proof.
\begin{coro} \label{coro:m-ae}
Let $G\subset E$ be open.
The following assertions are equivalent:
\begin{enumerate}
\item[(i)] The process $X$ is a diffusion in $G$ $m$-q.e. w.r.t. $\mathcal{U}$.
\item[(ii)] The killed process $X^G$ is a diffusion (on $G$) $m$-a.e.
\end{enumerate}
\end{coro}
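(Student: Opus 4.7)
The plan is to chain together the two preceding results that the statement itself points to, with \Cref{prop 2.7} handling the transition between the original process on $E$ and the killed process on $G$, and \Cref{prop 2.4}(iii) handling the transition between the $m$-a.e.\ and $m$-q.e.\ notions of diffusion. The direction (i) $\Rightarrow$ (ii) is essentially trivial once (i) is reformulated via \Cref{prop 2.7}, since $m$-q.e.\ obviously implies $m$-a.e. The substantive direction is (ii) $\Rightarrow$ (i), where one promotes an $m$-a.e.\ property of $X^G$ to an $m$-q.e.\ property and then transfers it back to $X$ on $G$.

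First, I would invoke \Cref{prop 2.7} to replace (i) by the equivalent statement that $X^G$ is a diffusion $m$-q.e.\ on $G$ with respect to $\mathcal{U}^G$. This reduces the whole question to a statement about the single right process $X^G$ living on $G$ (or on $G'$, by fine saturation), whose paths are killed at $T_{E\setminus G}\wedge \zeta$; in particular, the interval $I_G\cap[0,\zeta)$ for $X$ corresponds exactly to $[0,\zeta^G)$ for $X^G$, so the diffusion property of $X$ in $G$ coincides with the ordinary diffusion property of $X^G$.

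Next, I would apply \Cref{prop 2.4}(iii) to the right process $X^G$, with the reference $\sigma$-finite measure being the restriction $m|_G$, which is again $\sigma$-finite. This is precisely the step that upgrades the hypothesis in (ii), namely that
\[
\mathbb{P}^x\bigl(\{\omega : [0,\zeta^G(\omega))\ni t\mapsto X^G_t(\omega) \text{ is continuous}\}\bigr)=1 \quad m\text{-a.e.\ on } G,
\]
to the same equality holding $m$-q.e.\ on $G$, i.e.\ to $X^G$ being a diffusion $m$-q.e.\ on $G$ with respect to $\mathcal{U}^G$. Combining with the equivalence from \Cref{prop 2.7}, this yields (i), closing the loop.

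The only point that requires a bit of care, and which I would consider the main (minor) obstacle, is to verify that the hypotheses of \Cref{prop 2.4}(iii) are indeed available for $X^G$: one needs $X^G$ to be a right process on a Lusin space (guaranteed because restriction by killing preserves the right property), and the auxiliary measure must be $\sigma$-finite on the new state space (immediate from $\sigma$-finiteness of $m$). Once these are in place, no further computation is needed, and the direction (ii) $\Rightarrow$ (i) follows by the two-step reduction above; the reverse implication follows because the $m$-q.e.\ diffusion property of $X^G$ trivially implies its $m$-a.e.\ counterpart.
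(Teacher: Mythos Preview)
Your proposal is correct and follows exactly the route the paper indicates: combine \Cref{prop 2.7} (to pass between the diffusion property of $X$ in $G$ and that of $X^G$) with \Cref{prop 2.4}(iii) applied to the right process $X^G$ on $G$ (to upgrade $m$-a.e.\ to $m$-q.e.). The only extra care you take, namely checking that $X^G$ is again a right process on a Lusin space and that $m|_G$ is $\sigma$-finite, is precisely what is needed and is implicit in the paper's one-line remark.
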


\begin{rem}
Let us emphasize that the proof of \Cref{coro:m-ae} required more effort in comparison to that of \Cref{prop2.6} mainly because in contrast to \Cref{prop 2.4}, (i), the function 
\begin{equation*}
    E\ni x\mapsto \mathbb{P}^{x}(\{\omega : \emptyset \neq I_G(\omega)\cap [0, \zeta(\omega)) \ni t \longmapsto X_t(\omega) \mbox{ is continuous}\})\in [0,1]
\end{equation*} 
is not necessarily excessive.
\end{rem}

\paragraph{Resolvents,  generators, and martingale problems.}
In this paragraph we discuss several useful connections between resolvents of Markov kernels, generators, and corresponding martingale problems. 

Let us first give the following general definition which is going to be used throughout the entire paper:

\begin{defi} \label{defi:martingalepb}
Let ${\sf L}$ be a linear operator acting on a class $\mathcal{D}_0$ of real-valued $\mathcal{B}(E)$-measurable test functions such that for each $f\in\mathcal{D}_0$
\begin{equation*}
E \ni x\mapsto {\sf L} f(x)\in \mathbb{R} \mbox{ is } \mathcal{B}(E)\mbox{-measurable}.
\end{equation*}
We say that a Markov process $\left(\Omega, \mathcal{F}_t, X_t, \mathbb{P}^{x},x\in E\right)$ with lifetime $\zeta$ on $E$ solves the martingale problem associated ($m$-a.e., if $m$ is a given $\sigma$-finite measure on $E$) to $({\sf L},\mathcal{D}_0)$ if for each $f\in \mathcal{D}_0$
\begin{equation} \label{2.4}
f(X_{t\wedge \zeta})-\int_0^{t\wedge \zeta}{\sf L}f(X_r) \;dr, \quad t\geq 0
\end{equation}
is a $\mathcal{F}_t$-martingale w.r.t. $\mathbb{P}^{x}$, ($m$-a.e.) $x\in E$.

\end{defi}

Now, let $(U_\alpha)_{\alpha>0}$ be a resolvent of Markov kernels on $E$. 
Let $\alpha_0 >0$ such that the kernels $U_{\alpha}, \alpha>\alpha_0$ can be extended to bounded linear operators on $L^p(m)$ for some $1\leq p<\infty$; in particular, it is necessary that $m(A)=0$ implies $U_\alpha(1_A)=0$ $m$-a.e., $\alpha\geq 0$.
In this situation, we say that the resolvent of kernels $\mathcal{U}_{\alpha_0}$ can be extended to a resolvent on $L^p(m)$.

If there exists $\alpha_0 >0$ such that $\mathcal{U}_{\alpha_0}$ can be extended to a resolvent on $L^{p}(E, m)$ 
for some $p\geq 1$, we denote by $({\sf L}_p^m, D({\sf L}_p^m))$ the corresponding {\it generator} on $L^{p}(E,m)$ given by
\begin{align} \label{2.2}
& D({\sf L}_p^m):=\{U_\alpha f : f\in L^{p}(E,m)\}\\
& {\sf L}_p^m U_\alpha f:=\alpha U_\alpha f-f \mbox{ for al } f\in L^{p}(E,m), \quad \alpha > \alpha_0; \nonumber
\end{align}
recall that by the resolvent equation, the above definition does not depend on $\alpha$.

Concerning the existence of a measure $m$ such that $\mathcal{U}$ can be extended to a resolvent on $L^p(m)$, we can always rely on {\it potential measures} (i.e. measures of the type $\mu\circ U_\alpha$), employing the following known result (see e.g. \cite{RoTr07} and \cite{BeCiRo19}).

\begin{prop} \label{prop 2.10}
For any $\sigma$-finite measure $\mu$ on $E$ and $\alpha_0 >0$, $\mathcal{U}_{\alpha_0}$ extends to a strongly continuous resolvent on $L^p(E, \mu \circ U_{\alpha_0})$ for each $1\leq p < \infty$; in fact, if $(P_t)_{t\geq 0}$ denotes the corresponding semigroup regarded on $L^1(E, \mu \circ U_{\alpha_0})$, then $\|e^{-\alpha_0 t}P_t\|_{L^1}\leq 1, t\geq 0$. 
Moreover, if $\tau$ is a topology on $E$ which generates $\mathcal{B}$ and $\lim\limits_{\alpha\to\infty}\alpha U_\alpha f=f$ point-wise on $E$ for all $f\in bC(E)$, then choosing $(x_n)_{n\geq 1}$ to be a dense subset in $E$ and setting $\mu:=\mathop{\sum}\limits_{1\leq n<\infty} \frac{1}{2^n}\delta_{x_n}$, we additionally have that $\mu\circ U_{\alpha_0}$ has full topological support.
\end{prop}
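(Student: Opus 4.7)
The plan is to reduce everything to algebraic manipulations based on the defining identity $\int g\,d\nu = \int U_{\alpha_0}g\,d\mu$ (for $\nu := \mu\circ U_{\alpha_0}$ and $g\ge 0$ Borel), combined with Jensen's inequality and the resolvent equation. First note that $\nu$ is $\sigma$-finite: writing $\mu=\sum_n\mu_n$ with $\mu_n$ finite, each $\mu_n\circ U_{\alpha_0}$ has total mass at most $\mu_n(E)/\alpha_0$ by sub-Markovianity of $\alpha_0 U_{\alpha_0}$. Next, for $\beta>\alpha_0$ the kernel $\beta U_\beta(x,\cdot)$ is a sub-probability, so Jensen's inequality gives $|\beta U_\beta f|^p\le \beta U_\beta|f|^p$ pointwise and hence $|U_\beta f|^p\le \beta^{-(p-1)}U_\beta|f|^p$. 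Integrating against $\nu$ and using $U_{\alpha_0}U_\beta = (U_{\alpha_0}-U_\beta)/(\beta-\alpha_0)\le U_{\alpha_0}/(\beta-\alpha_0)$ yields
\begin{equation*}
    \int |U_\beta f|^p\,d\nu \;\le\; \frac{\beta^{-(p-1)}}{\beta-\alpha_0}\int |f|^p\,d\nu,
\end{equation*}
so $U_\beta$ extends boundedly to $L^p(\nu)$ and $\{\beta U_\beta\}_{\beta>2\alpha_0}$ is uniformly bounded. The $L^1$-contraction $\|e^{-\alpha_0 t}P_t\|_{L^1}\le 1$ follows from a Fubini computation: for $f\ge 0$, $U_{\alpha_0}P_t f = e^{\alpha_0 t}\int_t^\infty e^{-\alpha_0 s}P_s f\,ds \le e^{\alpha_0 t}U_{\alpha_0}f$, and integrating against $\mu$ gives $\int P_t f\,d\nu\le e^{\alpha_0 t}\int f\,d\nu$.

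For strong continuity I would first establish $\beta U_\beta g\to g$ in $L^p(\nu)$ on the range $D:=\{U_\alpha h : \alpha>\alpha_0,\ h\in L^p(\nu)\}$: the resolvent equation gives
\begin{equation*}
    \beta U_\beta U_\alpha h - U_\alpha h \;=\; \frac{\alpha\,U_\alpha h - \beta\,U_\beta h}{\beta-\alpha},
\end{equation*}
and both summands on the right tend to $0$ in $L^p(\nu)$ as $\beta\to\infty$ by the uniform bound above. Upgrading this to convergence on all of $L^p(\nu)$ requires the density of $D$ in $L^p(\nu)$; I regard this as the main technical obstacle. A clean way to handle it is by Hahn--Banach duality: any $\phi\in L^{p'}(\nu)$ annihilating $D$ satisfies $U_\alpha^{*}\phi=0$ in $L^{p'}(\nu)$ for every $\alpha>\alpha_0$, and one then exploits the specific structure $\nu = \mu\circ U_{\alpha_0}$ to conclude $\phi=0$, as carried out in \cite{RoTr07,BeCiRo19}. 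An equivalent, more direct route is to verify $\alpha U_\alpha f\to f$ in $L^p(\nu)$ for bounded $f$ via the identity $\int\alpha U_\alpha f\,d\nu = \alpha/(\alpha-\alpha_0)\int(U_{\alpha_0}f-U_\alpha f)\,d\mu$ together with a Vitali-type argument.

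For the full topological support claim, let $O\subset E$ be a non-empty open set. The Lusin space $E$ is metrizable, so Urysohn yields an index $m$ with $x_m\in O$ and a function $f\in bC(E)$ satisfying $0\le f\le 1_O$ and $f(x_m)>0$. The standing hypothesis $\alpha U_\alpha f(x_m)\to f(x_m)$ supplies some $\alpha>\alpha_0$ with $U_\alpha f(x_m)>0$; since $U_{\alpha_0}\ge U_\alpha$ on non-negative functions by the resolvent equation, we obtain
\begin{equation*}
    \bigl(\mu\circ U_{\alpha_0}\bigr)(O) \;\ge\; 2^{-m}\,U_{\alpha_0}1_O(x_m) \;\ge\; 2^{-m}\,U_\alpha f(x_m) \;>\; 0,
\end{equation*}
so $\nu$ charges every non-empty open set. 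Apart from the density of $D$, every step is a formal consequence of the resolvent equation and Jensen's inequality.
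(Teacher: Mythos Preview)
The paper does not supply its own proof of this proposition; it is stated as a known result with a reference to \cite{RoTr07} and \cite{BeCiRo19}. Your write-up is a correct and essentially standard reconstruction of that argument: the Jensen/resolvent-equation bound giving $\|\beta U_\beta\|_{L^p(\nu)}\le(\beta/(\beta-\alpha_0))^{1/p}$, the Fubini computation $U_{\alpha_0}P_t f\le e^{\alpha_0 t}U_{\alpha_0}f$ for the $L^1$-quasi-contraction, the resolvent identity to obtain $\beta U_\beta g\to g$ on the range, and the Urysohn/pointwise-convergence argument for full support are all in order. You correctly isolate the density of $D=\bigcup_{\alpha>\alpha_0}U_\alpha(L^p(\nu))$ as the only non-formal step and defer it to the same references the paper cites, so your treatment is at least as complete as the paper's.
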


Describing the domain of the generator on $L^p (\mu\circ U_\alpha)$ in the sense of \eqref{2.2} is not always an easy task.
A situation when we can easily get such information is when one starts from the martingale problem:
\begin{prop} \label{prop 2.11}
Suppose that $X$ is a Markov process with resolvent $\mathcal{U}$ such that $\mathcal{U}_{\alpha_0}$ can be extended to a strongly continuous resolvent of bounded operators on $L^{ p}(E, m)$, with generator $({\sf L}_p^{m}, D({\sf L}_p^{m}))$ given by \eqref{2.2}. 
If $X$ solves the martingale problem associated $m$-a.e. to $({\sf L}_0,\mathcal{D}_0)$ in the sense of \Cref{defi:martingalepb} and  $\mathcal{D}_0 \cup {\sf L}_0(\mathcal{D}_0)\subset L^{ p}(E, m)$, then
\begin{align*}
&\mathcal{D}_0 \subset D({\sf L}_p^{m})\\
&{\sf L}_p^{m} f= {\sf L}_0f \quad \mbox{ for all } f\in \mathcal{D}_0,
\end{align*}
i.e. $({\sf L}_p^{m}, D({\sf L}_p^{m}))$ is a (closed) extension of $({\sf L}_0,\mathcal{D}_0)$ on $L^{ p}(E, m)$.

\medskip
\noindent{Proof in \Cref{pf:1}.}
\end{prop}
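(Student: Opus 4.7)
The plan is to pass from the martingale identity to Dynkin's formula at the level of the semigroup, then to take the Laplace transform to land inside the domain $D({\sf L}_p^m)$ as defined in \eqref{2.2}. Fix $f \in \mathcal{D}_0$. Since $X$ solves the martingale problem associated $m$-a.e. to $({\sf L}_0,\mathcal{D}_0)$, taking expectations in \eqref{2.4} and using the convention that $f$ and ${\sf L}_0 f$ vanish at $\Delta$, we obtain for $m$-a.e.\ $x \in E$ and every $t \geq 0$
\begin{equation*}
P_t f(x) \;=\; f(x) + \int_0^t P_r\, {\sf L}_0 f(x)\, dr.
\end{equation*}
Here I would check Fubini's applicability using that ${\sf L}_0 f \in L^p(E,m)$ (hence $P_r {\sf L}_0 f$ is $m$-a.e.\ finite and jointly measurable in $(r,x)$).

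Next, multiply by $e^{-\alpha t}$ for $\alpha > \alpha_0$ and integrate over $t \in (0,\infty)$. On the left we get $U_\alpha f$; on the right, Fubini in the double integral $\int_0^\infty e^{-\alpha t}\int_0^t P_r {\sf L}_0 f\, dr\, dt = \int_0^\infty P_r {\sf L}_0 f \int_r^\infty e^{-\alpha t}\, dt\, dr$ yields $\tfrac{1}{\alpha} U_\alpha {\sf L}_0 f$. All this is valid in $L^p(E,m)$ thanks to the hypothesis $\mathcal{D}_0 \cup {\sf L}_0(\mathcal{D}_0) \subset L^p(E,m)$ and the bounded-operator extension of $U_\alpha$. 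Multiplying by $\alpha$ and rearranging,
\begin{equation*}
U_\alpha\bigl(\alpha f - {\sf L}_0 f\bigr) \;=\; f \qquad \text{in } L^p(E,m).
\end{equation*}

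Setting $g := \alpha f - {\sf L}_0 f \in L^p(E,m)$, this identity says exactly that $f = U_\alpha g$ lies in $D({\sf L}_p^m)$ as defined in \eqref{2.2}, and
\begin{equation*}
{\sf L}_p^m f \;=\; {\sf L}_p^m U_\alpha g \;=\; \alpha U_\alpha g - g \;=\; \alpha f - (\alpha f - {\sf L}_0 f) \;=\; {\sf L}_0 f,
\end{equation*}
which proves the claim. Closedness of $({\sf L}_p^m, D({\sf L}_p^m))$ is standard from strong continuity of $\mathcal{U}_{\alpha_0}$ on $L^p(E,m)$, so the extension assertion follows.

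The only delicate step I expect is the passage from the $\mathbb{P}^x$-a.s.\ martingale identity, valid for $m$-a.e.\ $x$, to the clean $L^p(m)$-identity above: one must carefully justify that the integrand $P_r{\sf L}_0 f(x)$ is jointly measurable and the iterated integrals are well-defined in $L^p(m)$, and that the exceptional $m$-null set does not interfere after taking the Laplace transform (which is harmless because $U_\alpha$ is defined on $L^p$-equivalence classes). Everything else is the standard Dynkin-formula / resolvent identity bookkeeping.
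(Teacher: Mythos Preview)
Your proof is correct and follows essentially the same line as the paper's: both take expectations in the martingale identity to obtain Dynkin's formula $P_t f - f = \int_0^t P_s {\sf L}_0 f\, ds$ $m$-a.e., and then read off that $f \in D({\sf L}_p^m)$ with ${\sf L}_p^m f = {\sf L}_0 f$. The only cosmetic difference is in the last step: the paper invokes the semigroup characterization of the generator and divides by $t$ to get $\lim_{t\to 0}\frac{1}{t}\int_0^t P_s {\sf L}_0 f\, ds = {\sf L}_0 f$ in $L^p$, whereas you take the Laplace transform to land directly in the resolvent description \eqref{2.2}. Your route is arguably more in keeping with the paper's own definition of $D({\sf L}_p^m)$, since it avoids appealing to the (standard but unstated) equivalence between the resolvent and semigroup definitions of the generator.
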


\subparagraph{A change-of-measure lemma.}
For technical reasons regarding the proof of \Cref{coro:nonqr} below, it will be useful to be able to replace $m$ with some equivalent finite measure, without loosing information about the domain $D({\sf L}_p^m)$ of the generator ${\sf L}_p^m$ on $L^p (m)$.  
Fortunately, this is always possible due to the following simple yet general result, which will be employed several times later on.

\begin{lem} \label{lem 2.11} Let $0<\rho\in L^1(m)\cap L^\infty(m)$, $\alpha >\alpha_0$, and consider the measure $m_\alpha^\rho:=(\rho\cdot m)\circ U_\alpha$.

If $\mathcal{U}_{\alpha_0}$ extends to a strongly continuous resolvent on $L^p(m)$ for some $1\leq p < \infty$, then $m_\alpha^\rho$ is equivalent to $m$ and

\begin{align*}
D({\sf L}_p^m) &\subset D({\sf L}_1^{m_{\alpha}^\rho}),\\
{\sf L}_1^{m_{\alpha}^\rho} f&={\sf L}_p^m f \quad \mbox{ for all } f\in D({\sf L}_p^m).
\end{align*}

\medskip
\noindent{Proof in \Cref{pf:1}.}
\end{lem}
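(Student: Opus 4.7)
The plan is threefold: first establish the equivalence $m \sim m_\alpha^\rho$; next, choosing for each $f\in D({\sf L}_p^m)$ a representation $f=U_\beta g$ with $g\in L^p(m)$ and $\beta$ in a range where the resolvent extends on $L^1(m_\alpha^\rho)$, verify $g\in L^1(m_\alpha^\rho)$; and finally read off $f\in D({\sf L}_1^{m_\alpha^\rho})$ together with the coincidence of the two generators directly from the defining formula \eqref{2.2}.

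For the equivalence, $m_\alpha^\rho\ll m$ is immediate: if $m(A)=0$ then $1_A=0$ in $L^p(m)$, hence $U_\alpha 1_A=0$ $m$-a.e.\ by the extension hypothesis, and so $m_\alpha^\rho(A)=\int \rho\,U_\alpha 1_A\,dm=0$. For the converse $m\ll m_\alpha^\rho$, starting from $m_\alpha^\rho(A)=0$ and using $\rho>0$ one gets $U_\alpha 1_A=0$ $m$-a.e.; by $\sigma$-finiteness of $m$ one may assume $m(A)<\infty$, so $1_A\in L^p(m)$, and the resolvent equation in $L^p(m)$ propagates the vanishing to all $\beta>\alpha_0$, after which strong continuity $\beta U_\beta 1_A\to 1_A$ in $L^p(m)$ yields $1_A = 0$ in $L^p(m)$, i.e.\ $m(A)=0$.

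Next, I would invoke \Cref{prop 2.10} applied to the finite measure $\mu:=\rho\cdot m$ (with the role of $\alpha_0$ in that statement played by $\alpha$) to ensure that the tail $(U_{\alpha+s})_{s>0}$ extends to a strongly continuous resolvent on $L^1(m_\alpha^\rho)$, which is enough (by the resolvent equation) to define $({\sf L}_1^{m_\alpha^\rho},D({\sf L}_1^{m_\alpha^\rho}))$ via \eqref{2.2}. The key integrability step is
\[
\int |g|\,dm_\alpha^\rho \;=\; \int \rho\,U_\alpha|g|\,dm \;<\;\infty \qquad\text{for every }g\in L^p(m),
\]
which follows by combining $U_\alpha|g|\in L^p(m)$ (since $U_\alpha$ is bounded on $L^p(m)$) with $\rho\in L^q(m)$ for the conjugate exponent $q\in[1,\infty]$ (using $\rho\in L^1(m)\cap L^\infty(m)$), via H\"older. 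Given $f\in D({\sf L}_p^m)$, writing $f=U_\beta g$ $m$-a.e.\ with $g\in L^p(m)\subset L^1(m_\alpha^\rho)$ and lifting this identity to an $m_\alpha^\rho$-a.e.\ identity by the equivalence established above yields $f\in D({\sf L}_1^{m_\alpha^\rho})$ with ${\sf L}_1^{m_\alpha^\rho}f=\beta f-g={\sf L}_p^m f$.

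I expect the main obstacle to be the reverse absolute continuity $m\ll m_\alpha^\rho$: the hypothesis gives only the $m$-a.e.\ information $U_\alpha 1_A=0$, and one has to recover $1_A=0$ itself, forcing the combined use of $\sigma$-finiteness (to place $1_A$ inside $L^p(m)$), the resolvent equation (to propagate the vanishing across the spectral parameter), and strong continuity (to take the limit as $\beta\to\infty$). All other pieces reduce to bookkeeping with H\"older and the defining formula for the generator once this equivalence is secured.
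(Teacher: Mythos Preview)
Your proof is correct and follows essentially the same route as the paper: H\"older for $L^p(m)\subset L^1(m_\alpha^\rho)$, the resolvent equation plus strong continuity for $m\ll m_\alpha^\rho$, and \Cref{prop 2.10} together with the defining formula \eqref{2.2} for the generator inclusion. You are in fact slightly more careful than the paper on two minor points: you explicitly invoke $\sigma$-finiteness to place $1_A$ in $L^p(m)$, and you distinguish the parameter $\beta>\alpha$ needed for the $L^1(m_\alpha^\rho)$-resolvent from the fixed $\alpha$ defining the measure.
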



\section{The main results}\label{s:main}

Having in mind \Cref{prop2.1}, throughout this section we assume that $X=(\Omega, \mathcal{F}, \mathcal{F}_t,\\ X_t, \theta_t, \mathbb{P}^x)$ is a right Markov process on a Lusin topological space $(E,\tau)$, with lifetime $\zeta$, which is c\` adl\` ag with respect to $\tau$; in particular, $\tau$ is a natural topology. 
Further, let $m$ be a $\sigma$-finite measure on $E$ such that the resolvent $\mathcal{U}$ of $X$ is strongly continuous on $L^{p}(m)$ for some $1\leq p<\infty$.
Also, we keep all the notations introduced in Section 2.

Let us first introduce some notions which are slight modifications of the usual ones.
\begin{defi} \label{defi:nest}
\begin{enumerate}[(i)]
\item An increasing sequence $(F_n)_{n\geq 1}$ of closed (respectively open) subsets of $E$ is called an 
{\rm $m$-nest}  of closed (respectively open) sets if
$$
\mathbb{P}^{x}\{\mathop{\sup}\limits_{n}T_{E\setminus {F}_{n}}\geq \zeta\}=1 \;\;m\mbox{-a.e.}
$$
\item A function $u:E\rightarrow \mathbb{R}$ is called {\rm $m$-quasi-continuous} (on short, $m$-q.c.) if there exists an $m$-nest of closed (or open) sets $(F_n)_{n\geq 1}$ such that $u|_{\overline{F_n}}$ is continuous for each $n\geq 1$.
\end{enumerate}
\end{defi}

Note that if $(F_n)_{n\geq 1}$ is an $m$-nest of open sets, then $(\overline{F_n})_{n\geq 1}$ becomes a $m$-nest of closed sets.
Also, let us give here two general lemmas that are going to be employed in the proof of the main result, namely \Cref{thm 2.2}.

\begin{lem} \label{lem:nest}
Suppose that $(F_n)_{n\geq 1}$ is an $m$-nest of closed (or open) sets and $G$ is a $\mathcal{B}$-measurable subset in $E$. 
If $G_n:= F_n \cap G$, then
\begin{equation*}
    \mathbb{P}^x\left\{\sup\limits_n T_{E\setminus G_n} \geq  T_{E\setminus G}\wedge \zeta \right\}=1, \quad m\mbox{-a.e. } x\in E.
\end{equation*}

\medskip
\noindent{Proof in \Cref{pf:2}.}
\end{lem}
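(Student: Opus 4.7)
}

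The plan is to reduce the statement to a pathwise identity for hitting times combined with the defining property of the $m$-nest. First, I would observe the set-theoretic decomposition
\begin{equation*}
    E\setminus G_n \;=\; E\setminus (F_n\cap G) \;=\; (E\setminus F_n)\cup (E\setminus G),
\end{equation*}
which converts the hitting time $T_{E\setminus G_n}$ into the hitting time of a union of two Borel sets.

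Next, I would invoke the standard pathwise identity for c\`adl\`ag processes
\begin{equation*}
    T_{A\cup B}\;=\;T_A\wedge T_B \qquad \text{for all } A,B\in\mathcal{B}.
\end{equation*}
The inequality $T_{A\cup B}\le T_A\wedge T_B$ is immediate from $A,B\subset A\cup B$. For the reverse direction, if $T_{A\cup B}(\omega)=t_0<\infty$, then by definition of the infimum one may extract a sequence $t_k\downarrow t_0$ with $t_k>0$ and $X_{t_k}(\omega)\in A\cup B$; passing to a subsequence one gets either infinitely many $t_k$ with $X_{t_k}\in A$ or infinitely many with $X_{t_k}\in B$, which forces $T_A(\omega)\le t_0$ or $T_B(\omega)\le t_0$. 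Applying this to $A:=E\setminus F_n$ and $B:=E\setminus G$ yields, pathwise,
\begin{equation*}
    T_{E\setminus G_n}\;=\;T_{E\setminus F_n}\wedge T_{E\setminus G}.
\end{equation*}

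Now I would exploit monotonicity: since $(F_n)_{n\ge 1}$ is increasing, $(E\setminus F_n)_{n\ge 1}$ is decreasing, so $T_{E\setminus F_n}$ is nondecreasing in $n$. For any increasing sequence $(a_n)$ and any $b$, one has $\sup_n(a_n\wedge b)=(\sup_n a_n)\wedge b$. Therefore
\begin{equation*}
    \sup_{n} T_{E\setminus G_n} \;=\; \Bigl(\sup_{n} T_{E\setminus F_n}\Bigr)\wedge T_{E\setminus G}.
\end{equation*}

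Finally, the defining property of an $m$-nest (Definition \ref{defi:nest}(i)) gives
$\mathbb{P}^x\{\sup_n T_{E\setminus F_n}\ge \zeta\}=1$ for $m$-a.e.\ $x\in E$, and on this event
\begin{equation*}
    \sup_n T_{E\setminus G_n}\;\ge\;\zeta\wedge T_{E\setminus G}\;=\;T_{E\setminus G}\wedge \zeta,
\end{equation*}
which is exactly the claimed identity. I do not anticipate a genuine obstacle here; the only point that merits a brief verification is the pathwise identity $T_{A\cup B}=T_A\wedge T_B$, which is where the c\`adl\`ag assumption on $X$ is used.
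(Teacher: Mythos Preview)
Your argument is correct and follows essentially the same route as the paper: both use that $E\setminus G_n=(E\setminus F_n)\cup(E\setminus G)$, reduce to comparing $\sup_n T_{E\setminus F_n}$ with $T_{E\setminus G}$, and finish with the $m$-nest property. One small remark: the identity $T_{A\cup B}=T_A\wedge T_B$ is a pure fact about infima (since $\{t>0:X_t\in A\cup B\}=\{t>0:X_t\in A\}\cup\{t>0:X_t\in B\}$) and requires no path regularity, so your invocation of the c\`adl\`ag hypothesis there is unnecessary.
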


\begin{lem} \label{lem:nestqe}
Suppose that $(F_n)_{n\geq 1}$ is an $m$-nest of closed (or open) sets, and in either situation set 
\begin{equation}
    v:=\inf\limits_n B^1_{E\setminus \overline{F_n}} \quad \mbox{on }E.
\end{equation}
Then the set $[v>0]$ is $m$-innesential.

\medskip
\noindent{Proof in \Cref{pf:2}.}
\end{lem}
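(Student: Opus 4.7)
The plan is to combine the decreasing-limit-of-$1$-excessive-functions structure of $v$ with the $m$-nest property: first I would establish $v=0$ $m$-a.e., then upgrade this via fine-topology arguments to the full innesentiality. Since $(\overline{F_n})$ is an $m$-nest of closed sets whenever $(F_n)$ is one of open sets (as remarked right before the statement), I first reduce to the case $F_n = \overline{F_n}$. Setting $v_n := B^{1}_{E\setminus F_n}(1)$, each $v_n$ is $1$-excessive, and since $E\setminus F_n \supset E\setminus F_{n+1}$ the hitting times $T_{E\setminus F_n}$ are non-decreasing in $n$, so $(v_n)$ is pointwise decreasing to $v$.

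To obtain $v = 0$ $m$-a.e., I use that, with the convention $1(\Delta) = 0$ fixed in the preliminaries, $v_n(x) = \mathbb{E}^{x}[e^{-T_{E\setminus F_n}};\, T_{E\setminus F_n} < \zeta]$; the nest condition yields $\sup_n T_{E\setminus F_n} \geq \zeta$ $\mathbb{P}^{x}$-a.s.\ for $m$-a.e.\ $x$, and dominated convergence (together with the cemetery convention, which handles trajectories whose exit times accumulate at $\zeta$) gives $v_n(x) \downarrow 0$ for $m$-a.e.\ $x$. In particular $m(\{v > 0\}) = 0$.

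The key step is to promote this to the statement that $\{v > 0\}$ is not hit by $X$ from any point where $v$ vanishes. For $k \geq 1$, since each $v_n$ is $1$-finely continuous (being $1$-excessive on a right process), every $\{v_n \geq 1/k\}$ is finely closed; the identity $\{v \geq 1/k\} = \bigcap_n \{v_n \geq 1/k\}$ (valid because $v_n \downarrow v$) then makes $\{v \geq 1/k\}$ a nearly-Borel finely closed set. Writing $T_k := T_{\{v \geq 1/k\}}$, the standard landing property of hitting times of finely closed sets gives $X_{T_k} \in \{v \geq 1/k\}$ on $\{T_k < \infty\}$. Combining $1$-excessivity of $v_n$ with the strong Markov property yields
\[
v_n(x) \;\geq\; \mathbb{E}^{x}\!\left[e^{-T_k}\, v_n(X_{T_k});\, T_k < \infty\right] \;\geq\; \tfrac{1}{k}\,\mathbb{E}^{x}\!\left[e^{-T_k};\, T_k < \infty\right],
\]
and letting $n \to \infty$ delivers $v(x) \geq \tfrac{1}{k}\,\mathbb{E}^{x}[e^{-T_k};\, T_k < \infty]$. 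Hence $v(x) = 0$ forces $\mathbb{P}^{x}[T_k < \infty] = 0$ for every $k$; right-continuity of paths together with $\{v > 0\} = \bigcup_k \{v \geq 1/k\}$ identifies $\{T_{\{v > 0\}} < \infty\} = \bigcup_k \{T_k < \infty\}$, so $\mathbb{P}^{x}[T_{\{v > 0\}} < \infty] = 0$ whenever $v(x) = 0$, in particular for $m$-a.e.\ $x$. Combined with $m(\{v > 0\}) = 0$, this is exactly the definition of $m$-innesentiality.

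The main obstacles I expect are the fine-topological facts used in the third step --- countable-intersection stability of fine closedness and the landing of first-hitting times in nearly-Borel finely closed sets --- which I would import from the right-process machinery collected in the appendix. A subtler bookkeeping issue arises in the second step when $\zeta < \infty$ and the exit times $T_{E\setminus F_n}$ accumulate strictly at $\zeta$; the cemetery convention $1(\Delta) = 0$ is designed precisely so that dominated convergence closes up in this case.
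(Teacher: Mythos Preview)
Your proof is correct and follows essentially the same route as the paper's: both show $m([v>0])=0$ from the nest condition and then use that the $1$-excessive functions $kv_n$ dominate $1$ on $[v>1/k]$ to kill the hitting probability of $[v>0]$ from $[v=0]$. The only difference is packaging --- the paper bounds the reduced function $R^1_{[v>1/k]}1 \leq kv_n$ directly from its definition as an infimum over $1$-excessive majorants, whereas you obtain the equivalent inequality $v_n(x) \geq \tfrac{1}{k}\,\mathbb{E}^{x}[e^{-T_k};T_k<\infty]$ via the strong Markov property and the landing of $X_{T_k}$ in the finely closed set $\{v\geq 1/k\}$.
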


The following definition settles the condition which is at the core of our main result.

\begin{defi}[$\rm{Loc}_m(G)$] \label{defi:locG}
For an open set $G\subset E$ we say that condition $\rm{Loc}_m(G)$ holds if there exist a sequence of functions $(f_n)_n\subset bp\mathcal{B}$ which are $m$-q.c. with some common $m$-nest of open (respectively closed) sets $(F_n)_{n\geq 1}$, and $(\varphi_n)_n\subset C(\mathbb{R}, \mathbb{R}_+)$ with the following properties:
\begin{enumerate}[(i)]
\item $(f_n)_n$ separates the points of $\mathop{\bigcup}\limits_{n \geq 1} F_n\cap \overline{G}$ in the sense that for every $G \ni y\neq x\in \mathop{\bigcup}\limits_{n \geq 1} F_n\cap \overline{G}$ there exists $n$ such that $f_n(x)<f_n(y)$.
\item $0\leq \varphi_k(x) \mathop{\nearrow}\limits_{k}x1_{[x>0]}=:x^{+}$ for all $x\in \mathbb{R}$ and
\begin{equation}
\mathcal{C}:=\{\varphi_k(f_n - \varepsilon) : n,k\geq 1, \varepsilon \in \mathbb{R}_+\}\subset D({\sf L}_p^m).
\end{equation}
\item For all $u\in \mathcal{C}$ we have ${\sf L}_p^m u=0$ $m$-a.e. on $\mathop{\wideparen{[u=0]}}\limits^\circ \cap G \cap F_n$ (respectively on $[u=0]\cap G\cap F_n$) for all $n\geq 1$.
\end{enumerate} 
\end{defi}
 
\begin{rem} \label{rem 3.2}
\begin{enumerate}[(i)]
\item If the functions $f_n,n\geq 1$ from \Cref{defi:locG} are continuous, the $m$-nest can simply be taken $F_n:=E,n\geq 1$. Also, we emphasize that the elements of the $m$-nest are not required to be compact, because the compactness is actually related to the fact that the process is c\` adl\` ag (which we already assumed), and not with the continuity of the trajectories. 
\item As already observed right after \Cref{defi:locG}, an $m$-nest of open sets generates an $m$-nest of closed sets if we take the closure of its elements. 
So working directly with $m$-nests of closed sets sounds more convenient. However, condition (iii) from \Cref{defi:locG} is sensitive to the two cases, being more relaxed in the case of an $m$-nest of open sets; in fact, in the case of an $m$-nest of closed sets, condition (iii) could pose difficulties if the topological boundary of $[u=0]\cap G \cap F_n$ is not negligible with respect to $m$. However, in our opinion this possible inconvenience is mostly theoretical; in fact, in practice we can frequently choose the functions $f_n,n\geq 1$, to be continuous on $E$, as detailed in the following two points.
\item Suppose we are in the case $E=\mathbb{R}^{d}$ and $G$ is an open subset of $E$.
For a function $f\in C_c^{\infty}(\mathbb{R}^d)$ such that $f(0)>0$, and a sequence $(x_n)_{n\geq 1} \subset G$ which is dense in $G$, set
\begin{equation*}
f_{n,k}(x):=f(n(x-x_k)), \; \; n,k\geq 1, x\in \mathbb{R}^{d}.
\end{equation*}
It is easy to see that for each $k\geq 1$ there exists $1\leq n(k)<\infty$ such that $f_{n,k}\in C_c^\infty(G)$ for all $n\geq n(k)$, and if $(f_n)_{n\geq 1}$ is a renumbering of $(f_{n,k})_{k\geq 1}^{n\geq n(k)}$, then $(f_n)_{n\geq 1}$ satisfies $\rm{Loc}_m(G)$, (i), for any $m$-nest.
\item If $E$ is a separable Banach space and $M \subset E$ is the state space of the Markov process under consideration, we can mimic the construction from (i) as follows: Let $(l_n)_{n\geq 1}\subset E'$ be total, i.e., if $x\in E$, $l_n(x)=0$ for all $n\geq 1$ implies $x=0$, and $(x_k)_{k\geq 1}$ be a dense subset in $M$.
For each $n\geq 1$, let $f_n\in C_c^{\infty}(\mathbb{R}^n)$ such that $f_n(0)>0$, and set
\begin{equation*}
f_{n,k,N}(x):=f_n(N(l_1(x-x_k),\cdots,l_n(x-x_k))), \; \; n,k\geq 1, x\in M.
\end{equation*}
Renumbering $(f_{n,k,N})_{n,k,N\geq 1}$ as $(f_n)_{n\geq 1}$, we have that for every $x\neq y$ from $M$, there exists $n\geq 1$ such that $f_n(x)<f_n(y)$.
\end{enumerate}
\end{rem}

The central result of this paper is the following.

\begin{thm} \label{thm 2.2} 
Let $G$ be an open subset of $E$. 
The following assertions hold.
\begin{enumerate}[(i)]
\item If condition $\rm{Loc}_m(G)$ is satisfied then $X$ is a diffusion in $G$ $m$-q.e.
\item Conversely, if $X$ killed upon leaving $G$ is a diffusion on $G$ $m$-a.e., then for all $m$-q.c. functions $u\in D({\sf L}_p^m)$
\begin{equation*}
{\sf L}_p^m u=0 \;\; m\mbox{-a.e. on }\mathop{\wideparen{[u=0]}}\limits^\circ\cap G.
\end{equation*}
\end{enumerate}

\medskip
\noindent{Proof in \Cref{pf:2}.}
\end{thm}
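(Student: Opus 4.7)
The plan is to handle each direction via the Dynkin-type identity attached to the $L^p$ generator, combined with the hitting-time machinery for the right process $X$. For part (i), I first invoke \Cref{prop 2.7} together with \Cref{coro:m-ae} to reduce the claim to showing that the killed process $X^G$ is a diffusion on $G$ $m$-a.e. Fix any $u=\varphi_k(f_n-\varepsilon)\in\mathcal{C}$. Since $u\in D({\sf L}_p^m)$, the definition of the generator via \eqref{2.2} together with \Cref{prop 2.11} read in reverse yields that
\begin{equation*}
N^u_t:=u(X^G_t)-u(X^G_0)-\int_0^t {\sf L}_p^m u(X^G_s)\,ds,\qquad t\geq 0,
\end{equation*}
is a martingale under $\mathbb{P}^x$ for $m$-a.e.\ $x\in G$. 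Introduce
\begin{equation*}
A_{n,k,\varepsilon}:=\mathop{\wideparen{[u=0]}}\limits^\circ\cap G\cap F_n;
\end{equation*}
by continuity of $f_n$ on $F_n$ this set contains every $x\in G\cap F_n$ with $f_n(x)<\varepsilon$, and on it one has simultaneously $u=0$ and ${\sf L}_p^m u=0$ $m$-a.e. Optional stopping at $\sigma:=T_{E\setminus A_{n,k,\varepsilon}}$, suitably localised by the approximating sequence $G_p\uparrow G$ from \Cref{defi:IG} and by a sequence of bounding times, yields $\mathbb{E}^x u(X^G_\sigma)=0$; since $u\geq 0$ this forces $u(X^G_\sigma)=0$ $\mathbb{P}^x$-a.s., for $m$-a.e.\ $x\in A_{n,k,\varepsilon}$. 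Now I rule out jumps of $X^G$ while inside $G$: by \Cref{lem:nest} and \Cref{lem:nestqe} the trajectory stays in $\bigcup_n F_n$ up to lifetime, $\mathbb{P}^x$-a.s.\ for $m$-a.e.\ $x$. Suppose towards a contradiction that on a positive-probability event $X^G$ has a jump at some $\tau$ with $x:=X^G_{\tau-}\neq X^G_\tau=:y$, where $x\in\overline{G}\cap\bigcup_n F_n$ and $y\in G\cap\bigcup_n F_n$. Using the separation property in $\mathrm{Loc}_m(G)$(i), pick $n_0$ with $f_{n_0}(x)<f_{n_0}(y)$, a rational $\varepsilon$ with $f_{n_0}(x)<\varepsilon<f_{n_0}(y)$, and $k$ with $\varphi_k(f_{n_0}(y)-\varepsilon)>0$. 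Then $x\in A_{n_0,k,\varepsilon}$ and $y\notin[u=0]$, so the strong Markov property at $\tau-$ contradicts $u(X^G_\tau)=u(y)>0$. Hence no such jump exists, and \Cref{coro:m-ae} upgrades the conclusion to $m$-q.e.

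For part (ii), I run the same Dynkin identity in reverse. Given an $m$-q.c.\ function $u\in D({\sf L}_p^m)$ with nest $(F_n)$, set $A:=\mathop{\wideparen{[u=0]}}\limits^\circ\cap G$. For $m$-a.e.\ starting point $x\in A\cap F_n$, the diffusion property of $X^G$ together with the $m$-quasi-continuity of $u$ on $F_n$ gives $u(X^G_{t\wedge\sigma})=0$ for all $t$, where $\sigma$ is the exit time from $A\cap F_n$; the martingale $N^u$ therefore reduces to $-\int_0^{t\wedge\sigma}{\sf L}_p^m u(X^G_s)\,ds$. Taking expectations, multiplying by $e^{-\alpha t}$, and integrating in $t$ produces $U^{A\cap F_n}_\alpha({\sf L}_p^m u)(x)=0$ for every $\alpha>\alpha_0$. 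Passing $\alpha\to\infty$ and using strong $L^p$-continuity ($\alpha U^{A\cap F_n}_\alpha g\to g$ in $L^p$ on $A\cap F_n$ for $g\in L^p(m)$) yields ${\sf L}_p^m u=0$ $m$-a.e.\ on $A\cap F_n$, and letting $n\to\infty$ finishes the claim on $A$.

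The main obstacle I anticipate is propagating the naturally $m$-a.e.\ Dynkin identity and optional-stopping arguments to $m$-q.e.\ or pointwise conclusions. This requires combining them with the excessivity-based tools of \Cref{prop 2.4} and \Cref{coro:m-ae}, and with \Cref{lem:nestqe} to absorb boundary-regularity pathologies of $G$ and to exploit the nest. A secondary subtlety is the careful handling of the two cases in condition~(iii) of \Cref{defi:locG}: in the open-nest case the set $A_{n,k,\varepsilon}$ is itself open and the exit-time analysis is routine, whereas in the closed-nest case one must work with the interior of the closure, which explains the two alternative formulations.
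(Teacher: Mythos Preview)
Your approach for (i) is in the same spirit as the paper's --- both hinge on showing that for each $u=\varphi_k(f_n-\varepsilon)\in\mathcal{C}$, the process cannot exit $[u=0]\cap G_i$ by jumping to a point where $u>0$ --- but you use a martingale/optional-stopping formulation where the paper uses a balayage identity. The paper writes $u=U_1 f$ and computes $B^1_{[f_n\geq\varepsilon]\cup(E\setminus G_i)}u$ directly via \eqref{eq:dynkin} and the strong Markov property, after carefully selecting a \emph{pointwise} version $f^\ast$ of $f$ that vanishes everywhere on the relevant set; this sidesteps the problem that ${\sf L}_p^m u=0$ holds only $m$-a.e.

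There is a genuine gap in your contradiction step. You invoke ``the strong Markov property at $\tau-$'', but there is no such thing: $\tau-$ is not a stopping time, and the distribution of $X_{\tau-}$ need not be controlled by $m$. Concretely, you have established $u(X^G_\sigma)=0$ $\mathbb{P}^x$-a.s.\ only for $m$-a.e.\ starting point $x\in A_{n,k,\varepsilon}$; to exploit this at the random pre-jump location $X_{\tau-}$ you must first restart at a deterministic (or at least $\mathcal{F}_s$-measurable) time $s<\tau$ with $X_s\in A_{n,k,\varepsilon}$ and then use that $P_s(x,\cdot)\ll m$ for $m$-a.e.\ $x$. This is exactly what the paper does in its Step~I/II: it decomposes the discontinuity event over rational times $s$ (using the $m$-quasi-continuity of $f_n$ on the nest to locate the path just before the jump), applies strong Markov at $s$, and reduces to the balayage statement \eqref{eq 2.2}. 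Your sketch omits this reduction, and without it the argument does not close.

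A second, more minor, point: your claim that $\int_0^{t\wedge\sigma}{\sf L}_p^m u(X^G_s)\,ds=0$ requires knowing that the path does not spend positive Lebesgue time in the $m$-null set where the chosen version of ${\sf L}_p^m u$ fails to vanish. This is true for $m$-a.e.\ starting point under the standing hypothesis, but should be stated; the paper avoids the issue entirely by working with the explicit version $f^\ast$ and the kernel identity $U_1 f^\ast=B^1_A U_1 f^\ast+\mathbb{E}^\cdot\int_0^{T_A}e^{-t}f^\ast(X_t)\,dt$.

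For (ii) your argument is essentially the paper's: both show $U^D_\alpha({\sf L}_p^m u-u)=0$ (equivalently $U^D_\alpha f=0$) $m$-a.e.\ on $D:=\mathop{\wideparen{[u=0]}}\limits^\circ\cap G$ using path continuity and $m$-quasi-continuity of $u$, and then conclude by strong continuity of the killed resolvent on $L^p(m|_D)$.
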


\begin{rem}

\begin{enumerate}[(i)] \label{rem:topology}
    \item Theorem \ref{thm 2.2}, (i) remains true even if we drop the assumption that the resolvent $\mathcal{U}$ extends on $L^{p}(E,m)$, and we replace the generator $(L,D(L))$ as follows: Assume that $\mathcal{U}$ respects the $m$-classes and let $V\subset L^{0}(E,m)$ s.t. for some $\alpha >0$ we have $U_\alpha |f|<\infty$ $m$-a.e. for all $f\in V$. Consider $(L, D_\alpha(L; V))$ given by $D_\alpha(L; V):=\{U_\alpha f \;: \; f\in V\}$, $LU_\alpha f= \alpha U_\alpha f-f$ for all $f\in V$.
    \item Theorem \ref{thm 2.2}, (i) remains true if we drop the assumption that the process $X$ is c\` adl\` ag w.r.t. a Lusin topology on $E$, and assume instead that the topology is Lusin merely relative to each element $F_n$ of the $m$-nest that is worked with. This remark is in particular useful in infinite dimensional Banach spaces, in the case when the process $X$ is c\` adl\` ag merely with respect to the weak topology which is not Lusin because it is not metrizable; nevertheless it becomes Lusin relatively to any ball in the space. If a norm-like function is $\alpha$-excessive for some $\alpha \geq 0$, then one can deduce that the balls of radius $n\geq 1$ form a nest, hence taking into account the above remark our result could still be applied to deduce the diffusion property in the weak topology.
\end{enumerate}

\end{rem}

\paragraph{The measure-free counterpart of \Cref{thm 2.2}.}
Before proceeding to the main result of this paragraph, for the sake of comparison let us recall that following well known result.
\begin{prop}[cf. \cite{BlGe68}, Proposition 9.10] \label{prop:Getoor}
Suppose that $E$ is a second countable locally compact space and $X$ is a standard process with Feller transition function, i.e. $P_t(C_0(E))\subset C_0(E)$ and $\lim\limits_{t\to 0}P_tf=f$ uniformly on $E$.
If for any compact $K\subset E$ and any open neighbourhood $D$ of $K$ it holds that
\begin{equation} \label{eq:locGetoor}
    \lim\limits_{t\searrow 0}\frac{P_t(E\setminus G,x)}{t}=0 \quad \mbox{ uniformly on } K,
\end{equation}
then $X$ is a diffusion.
\end{prop}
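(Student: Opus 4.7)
The plan is to deduce path continuity from the Lindeberg-type condition \eqref{eq:locGetoor} by first upgrading it to a uniform bound on small exit times via the strong Markov property, and then using a partition argument on a compact time interval together with Feller right continuity to rule out jumps. This is the classical Dynkin--Ray--Getoor scheme in the locally compact Feller setting.

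First, I would fix a compact $K \subset E$, an open neighbourhood $D$ of $K$, and a small $\varepsilon > 0$ chosen so that $\varepsilon$-balls around points of $K$ stay inside $D$ (use local compactness to reduce to relatively compact $D$ and a metric compatible with $\tau$ on a relatively compact open superset). Denote by $\tau_\varepsilon(x) := \inf\{s : X_s \notin B(x,\varepsilon)\}$ the first exit time from $B(x,\varepsilon)$. The aim of this first step is to show that
\begin{equation*}
\sup_{x \in K} \mathbb{P}^x\{\tau_\varepsilon \leq t\} = o(t) \qquad \text{as } t \searrow 0.
\end{equation*}
Starting from the decomposition
\begin{equation*}
\mathbb{P}^x\{\tau_\varepsilon \leq t\}
\;\leq\; \mathbb{P}^x\{X_t \notin B(x,\varepsilon/2)\} \;+\; \mathbb{P}^x\{\tau_\varepsilon \leq t,\; X_t \in B(x,\varepsilon/2)\},
\end{equation*}
on the second event one has $d(X_t, X_{\tau_\varepsilon}) \geq \varepsilon/2$. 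By the strong Markov property at $\tau_\varepsilon$ this term is bounded by $\sup_{y \in \overline{D}} \sup_{s \leq t} \mathbb{P}^y\{X_s \notin B(y,\varepsilon/2)\}$. The Feller property ensures this supremum tends to $0$ as $t \to 0$ (standard argument using tightness of $P_t$ near the identity for $f \in C_0(E)$ supported near $y$), and can be made $\leq 1/2$ for small $t$. Combining the two terms and iterating yields $\mathbb{P}^x\{\tau_\varepsilon \leq t\} \leq 2 \, \mathbb{P}^x\{X_t \notin B(x,\varepsilon/2)\}$, which by \eqref{eq:locGetoor} is $o(t)$ uniformly in $x \in K$.

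Next I would fix $T > 0$ and bound the probability of an $\varepsilon$-oscillation in $[0,T]$ when started in $K$. Define inductively $S_0 := 0$ and $S_{n+1} := S_n + \tau_\varepsilon \circ \theta_{S_n}$, counting the successive $\varepsilon$-excursions. By the strong Markov property and the fact that $X$ stays inside a larger relatively compact neighbourhood of $K$ with high probability on $[0,T]$ (by Feller tightness), one has, for any integer $N$,
\begin{equation*}
\mathbb{P}^x\{S_N \leq T\} \;\leq\; \bigl(\sup_{y \in \overline{D}'} \mathbb{P}^y\{\tau_\varepsilon \leq T/N\}\bigr)^{N} \;+\; \eta(T),
\end{equation*}
where $\eta(T) \to 0$ accounts for the process leaving a fixed compact superset, and $\overline{D}'$ is a suitable compact absorbing neighbourhood. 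Invoking the uniform $o(T/N)$ bound from the first step with $t = T/N$, the right-hand side tends to $\eta(T)$ as $N \to \infty$. Letting $\eta(T) \to 0$ by taking $D'$ large and $T$ small shows that a.s. on $\{X_{[0,T]} \subset K\}$ only finitely many $\varepsilon$-jumps occur; since moreover $\sup_n \mathbb{P}^x\{S_n < \infty, S_n \leq T\}$ vanishes, in fact no $\varepsilon$-jump occurs at all with probability one.

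Finally, I would combine this with the right continuity of the paths (which holds since $X$ is a standard process, hence in particular a Hunt process) to conclude that for almost every $\omega$ and every $t < \zeta(\omega)$, $\limsup_{s \to t} d(X_s, X_t) \leq \varepsilon$; letting $\varepsilon$ run through a countable null sequence gives continuity on $[0,\zeta)$. The main obstacle is the first step, namely converting the pointwise Lindeberg-type bound \eqref{eq:locGetoor} into a uniform bound on exit-time distributions: this is where the strong Markov property together with Feller tightness is essential, and it is also the place where local compactness and second countability enter decisively, since they permit the metric $\varepsilon$-ball construction and the compact exhaustion used to absorb error terms. Once this uniform bound is in hand, the remaining covering/iteration argument is routine.
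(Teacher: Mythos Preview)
The paper does not give its own proof of this proposition; it is merely recalled from \cite{BlGe68}, Proposition~9.10, for the sake of comparison with the paper's new results (\Cref{coro 2.6}). Your sketch follows the classical Dynkin--Ray--Getoor argument and is correct in outline.

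One small gap worth flagging: in your first step, after applying the strong Markov property at $\tau_\varepsilon$ you bound the second term by a supremum over $y\in\overline{D}$, but $X_{\tau_\varepsilon}$ need not lie in $\overline{D}$ since the process could jump far outside $B(x,\varepsilon)$. Consequently the clean inequality $\mathbb{P}^x\{\tau_\varepsilon\le t\}\le 2\,\mathbb{P}^x\{X_t\notin B(x,\varepsilon/2)\}$ does not follow as written; one must add a term for the event $\{X_{\tau_\varepsilon}\notin K'\}$ for a large compact $K'\supset\overline{D}$, and control that term separately via \eqref{eq:locGetoor} applied with the pair $(K,E\setminus K')$. You allude to this at the end (``compact exhaustion used to absorb error terms''), so you are aware of the issue, but the displayed inequality in Step~1 should be stated with this extra term present.
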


\begin{rem}
If $u\in pC_0(E)$ and if we set $G:=\mathop{\wideparen{[u=0]}}\limits^\circ$ then uniformly on each compact subset of $G$, in particular pointwise on $G$, we have
\begin{equation}
    \lim\limits_{t\searrow 0} \frac{P_tu(x)-u(x)}{t}= \lim\limits_{t\searrow 0} \frac{P_tu(x)}{t}\leq \frac{|u|_\infty P_t(E\setminus G,x)}{t} =0.
\end{equation}
Hence if $\mathcal{L}u(x):=\lim\limits _{t\searrow 0} \frac{P_tu(x)-u(x)}{t}, x\in E$ whenever the limit exists, then $\mathcal{L}u=0$ on $\mathop{\wideparen{[u=0]}}\limits^\circ$; this makes perfect match with the key property iii) of $\rm {Loc}_m(G)$ in \Cref{defi:locG}.
\end{rem}

Our next aim is to show that \Cref{thm 2.2} can be easily employed to extend \Cref{prop:Getoor} in much more general settings.
To this end, assume that the process $X$ is a c\` adl\` ag right process on a Lusin topological space $E$, with resolvent $\mathcal{U}$ and transition function $(P_t)_{t\geq 0}$.
However, this time we are not given a measure $m$ on $(E,\mathcal{B})$ for which we can directly employ \Cref{thm 2.2}.
Instead, for $\alpha > 0$ consider the generator $(L, D_\alpha(L))$ given by:
\begin{align}
D_\alpha(L):=\big\{ &u \in \mathcal{B} : \lim\limits_{t\searrow 0} \frac{P_t u- u}{t} \mbox{ exists } \mathcal{U}\mbox{-a.e. on } E \mbox{ and  there exists } V_u\in p\mathcal{B}(E) \mbox{ such that } \nonumber\\
&U_\alpha V_u <\infty \mbox{ and } \left| \frac{P_t u- u}{t} \right|\leq V_u \;\; \mathcal{U}\mbox{-a.e. on } E\big\},\label{eq:dgen}
\end{align}
whilst
\begin{fleqn}
\begin{equation}
Lu(x):=  \lim_{t\searrow 0} \frac{P_t u(x)- u(x)}{t}, \quad u\in D_\alpha(L), x\in E \;\; \mathcal{U}\mbox{-a.e.}\label{eq:gen}
\end{equation} 
\end{fleqn}

\begin{rem}
If in the definition of the operator $(L, D_\alpha (L))$ we assume that $V_u $ is bounded, then $(L, D_\alpha (L))$ is similar to (yet still weaker than) the generator introduced by E. B. Dynkin, \cite{Dy65}, page 54; cf. also \cite{Fi88}. 
\end{rem}

We introduce the following pointwise version of property {$\rm{Loc}_m(G)$}:

\begin{defi}[$\rm{Loc}(G)$] \label{defi:pLocG}
For an open set $G \subset E$ we say that condition $\rm{Loc}(G)$ holds if there exists $(f_n)_n\subset C_b^{+}(E)$,  $(\varphi_n)_n\subset C(\mathbb{R}, \mathbb{R}_+)$ and $\alpha_0 >0$ with the following properties:
\begin{enumerate}[(i)]
\item $(f_n)_n$ separates the points of $\overline{G}$ in the sense that for every $\overline{G}\ni x\neq y\in G$ there exists $n$ s.t. $f_n(x)<f_n(y)$.
\item $0\leq \varphi_k(x) \mathop{\nearrow}\limits_{k}x1_{[x>0]}=:x^{+}$ for all $x\in \mathbb{R}$ and 
$$ 
\mathcal{C}:=\{\varphi_k(f_n - \varepsilon) : n,k\geq 1, \varepsilon \in \mathbb{R}_+\}\subset D_{\alpha_0} (L)
$$

\item $Lu=0$ on $\mathop{\wideparen{[u=0]}}\limits^\circ\cap G$ for all $u\in \mathcal{C}$.
\end{enumerate} 
\end{defi}

The main result of this paragraph is the following.
\begin{coro}\label{coro 2.6}
$(i)$ If $G$ is an open subset of $E$ for which condition $\rm{Loc}(G)$ is satisfied,
then the Markov process $X$ is a diffusion in $G$.

$(ii)$ Conversely, if the killed process $X^G$ upon leaving $G$ is a diffusion on $G$, then $Lu=0$ on $\mathop{\wideparen{[u=0]}}\limits^\circ\cap G$ \;  $\mathcal{U}$-a.e. for every $u\in D_\alpha(L)\cap C_b(E)$, and everywhere if $Lu$ is in addition a finely (lower or upper) semi-continuous function.

\medskip
\noindent{Proof in \Cref{pf:2}.}
\end{coro}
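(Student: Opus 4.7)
The plan is to reduce \Cref{coro 2.6} to \Cref{thm 2.2} by constructing a $\sigma$-finite measure $m$ of full topological support on $(E,\mathcal{B})$ for which every $u$ appearing in $\rm{Loc}(G)$ (and, in part~(ii), every $u\in D_\alpha(L)\cap C_b(E)$) automatically lies in $D({\sf L}_1^m)$ with ${\sf L}_1^m u = Lu$ \;$m$-a.e., so that the pointwise condition \Cref{defi:pLocG} translates directly into its $m$-a.e.\ counterpart \Cref{defi:locG}. First I would fix a countable dense subset $(x_n)\subset E$ and apply \Cref{prop 2.10} to a measure $\mu:=\sum_{n\geq 1} c_n\delta_{x_n}$ whose summable positive weights $c_n$ are chosen by a diagonal procedure such that, for each of the countably many envelopes $V_u$ arising from the $D_{\alpha_0}(L)$-membership of the $u\in\mathcal{C}$ (i.e., $|P_tu-u|/t\leq V_u$ $\mathcal{U}$-a.e.\ with $U_{\alpha_0}V_u<\infty$ $\mathcal{U}$-a.e.), one has $\sum_n c_n\, U_{\alpha_0}V_u(x_n)<\infty$; the $x_n$'s are picked outside the $\mathcal{U}$-negligible set $[U_{\alpha_0}V_u=\infty]$, which leaves the sequence dense. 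The resulting finite measure $m:=\mu\circ U_{\alpha_0}$ then has full topological support, $\mathcal{U}_{\alpha_0}$ extends strongly continuously to $L^1(m)$, and every relevant $V_u$ lies in $L^1(m)$.

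\textbf{From the pointwise to the $L^1$-generator.} The crucial link is the resolvent identity
\begin{equation*}
u=U_{\alpha_0}\bigl(\alpha_0 u-Lu\bigr)\qquad\mathcal{U}\mbox{-a.e.},\quad u\in D_{\alpha_0}(L),
\end{equation*}
which I would derive by using the envelope $V_u$ with dominated convergence to pass the derivative inside $P_t$, integrating $\tfrac{d}{dt}(e^{-\alpha_0 t}P_t u)$ from $0$ to $\infty$, and noting that $e^{-\alpha_0 T}P_T u\to 0$ as $T\to\infty$ because $u$ is bounded and $\alpha_0>0$. Since $u$ is bounded and $|Lu|\leq V_u\in L^1(m)$, the function $\alpha_0 u-Lu$ lies in $L^1(m)$, and the defining formula \eqref{2.2} of the $L^1$-generator immediately places $u\in D({\sf L}_1^m)$ with ${\sf L}_1^m u=Lu$ $m$-a.e. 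Applied to each $u\in\mathcal{C}$, and noting that the $f_n\in C_b^+(E)$ are continuous, hence trivially $m$-quasi-continuous with the constant nest $F_k\equiv E$, this reduces items (i) and (ii) of \Cref{defi:locG} to their pointwise counterparts in \Cref{defi:pLocG}, while (iii) of \Cref{defi:locG} follows $m$-a.e.\ from (iii) of \Cref{defi:pLocG} since the two generators agree $m$-a.e.

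\textbf{Conclusion of (i) and (ii).} For part~(i), \Cref{thm 2.2}(i) now yields that $X$ is a diffusion in $G$ $m$-q.e. Because $m$ has full topological support and is a potential measure of the right process $X$, every $m$-null set is $\mathcal{U}$-negligible, so \Cref{coro:m-ae} combined with \Cref{prop 2.4}(ii) applied to the killed process $X^G$ upgrades this to the everywhere statement required in (i). For part~(ii), the hypothesis that $X^G$ is a diffusion on $G$ in particular implies the $m$-a.e.\ version needed by \Cref{thm 2.2}(ii); any $u\in D_\alpha(L)\cap C_b(E)$ fits the construction above (absorbing its single envelope $V_u$ into $m$ by the same diagonal trick, or through a further reweighting of $m$ via \Cref{lem 2.11} if convenient), so \Cref{thm 2.2}(ii) gives ${\sf L}_1^m u=0$ $m$-a.e.\ on $\mathop{\wideparen{[u=0]}}\limits^{\circ}\cap G$, equivalently $Lu=0$ $\mathcal{U}$-a.e.\ there. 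When $Lu$ is finely semi-continuous, the standard upgrade of $\mathcal{U}$-a.e.\ vanishing of a finely semi-continuous function on a finely open set to identical vanishing provides the everywhere conclusion.

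\textbf{Main obstacle.} The only really delicate step is ensuring the simultaneous integrability of $\alpha_0 u-Lu$ in $L^1(m)$ uniformly over the countable family $\mathcal{C}$ (and over the single $u$ of part~(ii)); I would handle this by the diagonal choice of the Dirac weights $c_n$ in $\mu$, with \Cref{lem 2.11} available as a backup to further reweight $m$ if needed. Once this integrability packaging is in place, the rest of the argument is a mechanical translation between the pointwise locality of \Cref{defi:pLocG} and the $L^1$-locality of \Cref{defi:locG}, combined with the previously established \Cref{thm 2.2,prop 2.4,prop 2.7,coro:m-ae}.
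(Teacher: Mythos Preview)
Your reduction to \Cref{thm 2.2} via a potential measure is exactly the right idea, and most of the packaging (the resolvent identity $u=U_{\alpha_0}(\alpha_0 u-Lu)$, the dominated-convergence passage to the $L^1$-generator, the trivial nest $F_k\equiv E$) is fine. However, the approach has one genuine gap and one avoidable complication, both of which the paper's proof sidesteps by a cleaner choice of measure.

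\textbf{The gap: $m$ need not be a reference measure.} Your upgrade from ``diffusion in $G$ $m$-q.e.'' to ``diffusion in $G$ everywhere'' rests on the claim that ``because $m$ has full topological support and is a potential measure, every $m$-null set is $\mathcal{U}$-negligible.'' This is not true in general. If $m=\mu\circ U_{\alpha_0}$ with $\mu=\sum c_n\delta_{x_n}$, then $m(A)=0$ only says $U_{\alpha_0}1_A(x_n)=0$ for all $n$. The function $U_{\alpha_0}1_A$ is $\alpha_0$-excessive, hence finely continuous, but the $(x_n)$ are dense in the original topology $\tau$, not in the (strictly finer) fine topology, so you cannot conclude $U_{\alpha_0}1_A\equiv 0$. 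Without the strong Feller property (cf.\ the remark preceding \Cref{prop 2.4}) there is no reason for $m$ to be a reference measure, and the invocation of \Cref{prop 2.4}(ii) breaks down.

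\textbf{The complication: $\mathcal{C}$ is uncountable.} Your diagonal choice of weights $c_n$ is designed to make countably many envelopes $V_u$ integrable, but $\mathcal{C}=\{\varphi_k(f_n-\varepsilon):n,k\geq 1,\varepsilon\in\mathbb{R}_+\}$ is uncountable in $\varepsilon$. One can restrict to rational $\varepsilon$ (this suffices for the proof of \Cref{thm 2.2}), but then you are no longer verifying $\mathrm{Loc}_m(G)$ as stated.

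\textbf{How the paper avoids both issues.} Instead of building one universal $m$, the paper takes, for each $x\in E$ separately, the measure $\nu_x:=\delta_x\circ U_{\alpha_0}$. For fixed $x$ and any $u\in\mathcal{C}$ (no countability needed), $\int V_u\,d\nu_x=U_{\alpha_0}V_u(x)<\infty$, so dominated convergence puts $\mathcal{C}\subset D({\sf L}_1^{\nu_x})$ with no diagonal argument. \Cref{thm 2.2} then gives ``diffusion in $G$ $\nu_x$-q.e.'', in particular $\nu_x$-a.e., for every $x$; but $\nu_x$-a.e.\ for all $x$ is precisely $\mathcal{U}$-a.e., so no reference-measure hypothesis is required. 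The upgrade to ``everywhere'' then follows from \Cref{prop 2.4}(ii) and \Cref{prop 2.7}. The same device handles part~(ii): apply \Cref{thm 2.2}(ii) with each $\nu_x$ to get $Lu=0$ $\mathcal{U}$-a.e.\ on $\mathop{\wideparen{[u=0]}}^\circ\cap G$, and then use \Cref{rem 2.2}(i) for the finely semi-continuous case. In short, replacing your global $m$ by the family $(\nu_x)_{x\in E}$ removes both difficulties at once.
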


\subparagraph{The non-homogeneous case.}
Let us show that \Cref{coro 2.6} can be easily extended to non-homogeneous transition functions and their associated Markov processes.
So, let us assume that $(P_{s,s+t})_{s,t\geq 0}$ is the transition function of a non-homogeneous c\`adl\`ag Markov process $(X_t)_{t\geq 0}$ on a Lusin space $E$.
Then
$$
Q_tf(s,x):=P_{s,s+t}f(s+t,\cdot)(x) \mbox{ for all } x\in E, s,t \geq 0
$$
is the transition function of the process $Z:=((X_{u_t},u_t))_{t\geq 0}$ on the product space $E \times [0, \infty)$, where $(u_t)_{t\geq 0}$ is the uniform motion to the right.
Suppose that $Z$ has a version which is a right process, denoted also by $Z$; for example, one can show that this is always true if $P_{s,t}$ is Feller, applying \Cref{prop2.1} to $(Q_t)_{t\geq 0}$.
Let $\overline{\mathcal{U}}:=(\overline{U}_\alpha)_{\alpha>0}$ denote the resolvent associated to $(Q_t)_{t\geq 0}$ on $E\times [0,\infty)$ and consider the parabolic generator $(\Lambda, D_\alpha(\Lambda))$ on $E$ associated to $(P_{s,s+t})_{s,t\geq 0}$, $\alpha>0$ and $s\geq 0$:

\begin{align*}
    &\begin{aligned}
        D_\alpha(\Lambda):=\Big\{F\in b\mathcal{B}(E\times [0,\infty)) \;:\; &\left|\frac{d F}{dt}(\cdot,\cdot)\right|_{\infty}<\infty, \; \lim\limits_{t\searrow 0}\; \frac{P_{s,s+t} F(s,\cdot)- F(s,\cdot)}{t} \mbox{ exists } \overline{\mathcal{U}}\mbox{-a.e,} \\
        & \mbox{ and there exists } V_F \mbox{ such that } \overline{U}_\alpha(V_F)<\infty \mbox{ such that }\\
        & \left| \frac{Q_t F- F}{t} \right|\leq V_F \quad \overline{\mathcal{U}}\mbox{-a.e on } E\times [0,\infty) \Big\}
    \end{aligned}\\
    &\Lambda F(x,t):=\left(\frac{d}{dt}+L_t\right)F(x,t)=\frac{d F(x,\cdot)}{dt}(t)+L_tF(\cdot,t)(x), \;\; F\in D_\alpha(\Lambda), (x,t)\in E\times[0,\infty)\\
    \intertext{where}
    &L_s F(\cdot,s)(x):=  \lim_{t\searrow 0} \frac{P_{s,t+s} F(\cdot,s)(x)- F(x,s)}{t}, \quad F\in D_\alpha(\Lambda), (x,t)\in E\times [0,\infty). 
\end{align*}

\begin{coro}\label{coro:nonh}
Assume that $\rm{Loc}(G)$ given in \Cref{defi:pLocG} holds for $G$ with $(L,D_\alpha(L))$ replaced by $(L_s,D_\alpha(\Lambda))$ for every $s\geq 0$.
Then the homogeneous process $Z$ is a diffusion on $\widetilde{G}:=G\times [0,\infty)$.

\medskip
\noindent{Proof in \Cref{pf:2}.}
\end{coro}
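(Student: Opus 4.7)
The plan is to derive \Cref{coro:nonh} from the homogeneous \Cref{coro 2.6} applied to the space--time process $Z$ on the Lusin space $\widetilde{E}:=E\times[0,\infty)$, with resolvent $\overline{\mathcal{U}}$, generator the parabolic operator $(\Lambda,D_{\alpha_0}(\Lambda))$, and open set $\widetilde{G}:=G\times[0,\infty)$. Since by hypothesis $Z$ is a c\`adl\`ag right process, the only thing to be verified is condition $\rm{Loc}(\widetilde{G})$ of \Cref{defi:pLocG} at the level of $Z$, after which \Cref{coro 2.6} yields the conclusion at once.

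To build a suitable test family on $\widetilde{E}$ I would take $(f_n)\subset C_b^+(E)$ and $(\varphi_k)\subset C(\mathbb{R},\mathbb{R}_+)$ furnished by the hypothesis (chosen independent of $s$, since the assumption is imposed uniformly for every $s\geq 0$), together with a countable family $(\psi_m)\subset C_b^+([0,\infty))\cap C^1$ with bounded derivatives that separates points of $[0,\infty)$ in both directions. Lift these to $\widetilde{E}$ by $\widetilde{f}_n(x,s):=f_n(x)$ and $g_m(x,s):=\psi_m(s)$; the combined family then separates $\overline{\widetilde{G}}=\overline{G}\times[0,\infty)$, points with distinct $x$--coordinates being separated by some $\widetilde{f}_n$ and those with equal $x$ but distinct $s$ by some $g_m$. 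Because $\widetilde{f}_n$ is $s$--independent one has $\Lambda[\varphi_k(\widetilde{f}_n-\varepsilon)]=L_s[\varphi_k(f_n-\varepsilon)]$ together with $\wideparen{[\varphi_k(\widetilde{f}_n-\varepsilon)=0]}^\circ=\wideparen{[\varphi_k(f_n-\varepsilon)=0]}^\circ\times[0,\infty)$, so both membership in $D_{\alpha_0}(\Lambda)$ and the locality on this set intersected with $\widetilde{G}$ follow directly from the hypothesis applied for each fixed $s$. For the time--only functions a brief computation from $Q_tg_m(x,s)=\psi_m(s+t)P_{s,s+t}1(x)$ gives
\[
\Lambda[\varphi_k(g_m-\varepsilon)](x,s)=\varphi_k'(\psi_m(s)-\varepsilon)\,\psi_m'(s)+\varphi_k(\psi_m(s)-\varepsilon)\,L_s 1(x),
\]
and if I additionally require $\varphi_k\in C^1$ with $\varphi_k\equiv 0$ on $(-\infty,-1/k]$, then on the interior of $\overline{[\varphi_k(g_m-\varepsilon)=0]}$ one has $\psi_m(s)-\varepsilon\leq -1/k$ in a neighbourhood, so both summands vanish there.

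The main technical obstacle is the clean verification that these lifted test functions actually lie in $D_{\alpha_0}(\Lambda)$: the difference quotient $(Q_tF-F)/t$ has to be dominated $\overline{\mathcal{U}}$--a.e.\ by some $\overline{U}_{\alpha_0}$--finite majorant $V_F$ as in \eqref{eq:dgen}. For $\varphi_k(g_m-\varepsilon)$ this reduces to controlling the (possibly unbounded) killing rate $L_s 1$, which is trivial in the sub--conservative case and is otherwise handled by exploiting that $V_F$ in \eqref{eq:dgen} is only required to be $\overline{U}_{\alpha_0}$--finite and not bounded. Beyond this issue, the argument is a routine transfer of the verification already carried out in \Cref{coro 2.6}.
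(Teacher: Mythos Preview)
Your overall strategy---reduce to the homogeneous space--time process $Z$ and feed it into the machinery of \Cref{coro 2.6}/\Cref{thm 2.2}---matches the paper's. The paper's proof is, however, considerably shorter: for each $\overline{x}\in E\times[0,\infty)$ it passes to the potential measure $\nu_{\overline{x}}:=\delta_{\overline{x}}\circ\overline{U}_{\alpha_0}$, shows by dominated convergence (using the majorant $V_F$ built into the definition of $D_{\alpha_0}(\Lambda)$) that $(\Lambda,D_{\alpha_0}(\Lambda))\subset({\sf L}_1^{\nu_{\overline{x}}},D({\sf L}_1^{\nu_{\overline{x}}}))$, and then simply re-runs the argument of \Cref{coro 2.6}. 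No time-only test functions $g_m$ are introduced.

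Your detour through the $g_m$ is where the obstacle you flag (the $L_s1$ term) arises, and your proposed fix is not robust: in the genuinely sub-Markovian case the limit $\lim_{t\searrow 0}(P_{s,s+t}1-1)/t$ need not exist at all, so $\varphi_k(g_m-\varepsilon)$ may fail to lie in the pointwise (or $L^1$) domain, and appealing to $\overline{U}_{\alpha_0}$-finiteness of $V_F$ does not rescue the \emph{existence} of the limit. The clean way out---which the paper uses tacitly by saying ``continue as for \Cref{coro 2.6}''---is to observe that the second component of $Z$ is the deterministic uniform motion $u_t=s+t$, hence continuous; consequently any jump of $Z$ satisfies $Z_{t-}=(y,s+t)$, $Z_t=(z,s+t)$ with $y\neq z$. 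In Step~I of the proof of \Cref{thm 2.2} the separation hypothesis is used only to detect such jumps, and for pairs with equal time coordinate the lifted spatial functions $\widetilde{f}_n$ already separate. So the full separation of $\overline{\widetilde{G}}$ (and hence the $g_m$) is not needed, and the $L_s1$ issue evaporates.
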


\paragraph{Domination hypothesis and diffusions in other natural topologies.}

Throughout this paragraph we consider a slightly different framework than in the beginning of this section, namely we still assume that $X=(\Omega, \mathcal{F}, \mathcal{F}_t, X_t, \theta_t, \mathbb{P}^x)$ is a right Markov process on a Lusin measurable space $(E,\mathcal{B})$, with lifetime $\zeta$, but we do not fix a certain topology $\tau$ on $E$ with respect to which $X$ is c\` adl\` ag. 
Instead, we would like to address the following question: under which conditions it is true that if $X$ is a diffusion in $G$ $m$-q.e. with respect to one natural topology, then it automatically enjoys the same property with respect to all natural topologies?
In other words, we are interested in understanding when the diffusion property is stable under changing the (natural) topology.

\begin{prop} \label{prop:Dnatural}
Suppose that condition $\mathbf{(D_{m})}$ from Appendix holds and let $\tau$ and $\tau'$ be two natural topologies.
Then there exists a common nest of compact sets $(F_n)_{n\geq 1}$ such that $\tau|_{F_n}=\tau'|_{F_n}$ for all $n\geq 1$.
In particular, if $G$ is open (with respect to $\tau$ or $\tau'$), then:
\begin{enumerate}[(i)]
    \item $X$ is a diffusion in $G$ ($m$-q.e.) with respect to $\tau$ if and only if it is a diffusion in $G$ ($m$-q.e.) with respect to $\tau'$.
    \item Condition ${\rm Loc_m(G)}$ holds with respect to $\tau$ if and only if it holds with respect to $\tau'$.
\end{enumerate}

\medskip
\noindent{Proof in \Cref{pf:2}.}
\end{prop}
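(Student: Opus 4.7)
The plan is to construct, under $\mathbf{(D_m)}$, an $m$-nest of sets that are compact in both topologies and on which $\tau$ and $\tau'$ literally agree, and then to read off assertions (i) and (ii) as immediate corollaries. The guiding observation is that the Borel $\sigma$-algebra of $E$, the notion of $m$-nest, and the resolvent $\mathcal{U}$ are all insensitive to the choice of natural topology; only the compactness requirement on the nest and the continuity of test functions can discriminate between $\tau$ and $\tau'$.

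First I would invoke $\mathbf{(D_m)}$ separately in each topology to obtain an $m$-nest $(K_n)_n$ of $\tau$-compact subsets and an $m$-nest $(K'_n)_n$ of $\tau'$-compact subsets, both living in the common Borel structure $\mathcal{B}$. Next I would pick countable families $(g_k)_k \subset C_b(E,\tau)$ and $(g'_k)_k \subset C_b(E,\tau')$ separating the points of $E$, which exist because both topologies are Lusin. Each such function is bounded Borel, hence lies in $L^p(m)$, and by $\mathbf{(D_m)}$ is $m$-quasi-continuous with respect to the \emph{other} topology as well. This furnishes a further $m$-nest of $\tau'$-compact sets on which every $g_k$ is $\tau'$-continuous and, symmetrically, an $m$-nest of $\tau$-compact sets on which every $g'_k$ is $\tau$-continuous. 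Intersecting all four nests and, if needed, refining by replacing each member of the intersection by the largest subset that is closed in both enclosing compacta (a standard manoeuvre from the theory of nests), produces an $m$-nest $(F_n)_n$ such that each $F_n$ is compact in both $\tau$ and $\tau'$ and every function in $(g_k) \cup (g'_k)$ is continuous in both topologies on $F_n$.

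On such an $F_n$, the combined family $(g_k) \cup (g'_k)$ still separates points, so the evaluation map into $\prod_k \mathbb{R} \times \prod_k \mathbb{R}$ is a continuous injection out of each of the compact Hausdorff spaces $(F_n,\tau|_{F_n})$ and $(F_n,\tau'|_{F_n})$; a continuous injection from a compact Hausdorff space into a Hausdorff space is a homeomorphism onto its image, and the two images coincide, forcing $\tau|_{F_n} = \tau'|_{F_n}$. Assertion (i) then follows because the $m$-q.e.\ path continuity of $X$ in $G$ may be decided on the time intervals during which the trajectory stays in some $F_n$, via \Cref{lem:nest} and \Cref{prop 2.4}(iii), and $\tau$-continuity and $\tau'$-continuity of $t\mapsto X_t(\omega)$ are the same notion on $F_n$; the set $G \cap F_n$ is open in both topologies, so the intervals $I_G$ of \Cref{defi:IG} match in both setups. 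Assertion (ii) is similarly immediate because every ingredient of $\mathrm{Loc}_m(G)$ in \Cref{defi:locG} is computed on a nest and is therefore topology-independent once we refine to the common nest $(F_n)$. The main obstacle is the cross-topological $m$-quasi-continuity step: producing, from $\mathbf{(D_m)}$, an $m$-nest in one topology on which a function continuous in the other topology becomes continuous. This is precisely the potential-theoretic content delivered by $\mathbf{(D_m)}$ through the machinery recalled in the Appendix; once that transfer is secured, the remainder is a standard compact Hausdorff identification argument.
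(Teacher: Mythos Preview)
Your approach contains a genuine gap at exactly the point you flag as ``the main obstacle'': the claim that a function continuous for $\tau$ is automatically $m$-quasi-continuous for $\tau'$ under $\mathbf{(D_m)}$ is not justified. The hypothesis $\mathbf{(D_m)}$ asserts $m$-regularity of certain $q$-excessive functions dominated by a fixed potential; it says nothing about arbitrary bounded continuous (or Borel) functions. Neither being in $L^p(m)$ nor being $\tau$-continuous places $g_k$ in a class to which $\mathbf{(D_m)}$ applies, so the cross-topological quasi-continuity you need is simply asserted, not proved. There is also a secondary problem with your intersection step: $K'_n$, being $\tau'$-compact, is $\tau'$-closed but has no reason to be $\tau$-closed, so $K_n\cap K'_n$ need not be $\tau$-compact; the vague ``standard manoeuvre'' of passing to a largest bi-closed subset does not repair this without further argument and may destroy the nest property.

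The paper's proof bypasses all of this with a single structural observation you missed: by \Cref{lem:finer_top} there exists a Ray topology $\mathcal{T}$ finer than \emph{both} $\tau$ and $\tau'$. One then applies $\mathbf{(D_m)}$ \emph{once}, through Theorem~1.5 of \cite{BeBo05}, to obtain an $m$-nest $(F_n)_n$ of $\mathcal{T}$-compact sets. Since $\mathcal{T}\supset\tau,\tau'$, each $F_n$ is automatically compact in both coarser topologies, and the elementary fact that a compact Hausdorff topology admits no strictly finer Hausdorff comparable topology yields $\tau|_{F_n}=\mathcal{T}|_{F_n}=\tau'|_{F_n}$ immediately. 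No quasi-continuity transfer and no delicate intersection of nests is required.
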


As an immediate consequence of \Cref{prop:Dnatural} and \Cref{thm 2.2}, we conclude this paragraph as follows:
\begin{coro}\label{coro:D_m}
Assume that condition $\mathbf{(D_{m})}$ is satisfied and let $G\subset E$ be open with respect to some natural topology. 
If ${\rm Loc_m(G)}$ holds, then $X$ is a diffusion in $G$ $m$-q.e. with respect to all natural topologies. 
\end{coro}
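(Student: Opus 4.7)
}

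The plan is to chain the two results that the corollary explicitly advertises: first invoke \Cref{thm 2.2}(i) in the natural topology in which ${\rm Loc}_m(G)$ is assumed, and then transfer the conclusion to every other natural topology by means of \Cref{prop:Dnatural}. No further analytic work is needed.

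More precisely, I would fix the natural topology $\tau$ on $E$ with respect to which $G$ is open and the hypothesis ${\rm Loc}_m(G)$ is formulated. All ingredients of ${\rm Loc}_m(G)$ from \Cref{defi:locG} -- the $m$-nest $(F_n)_{n\geq 1}$, the $m$-quasi-continuity of the separating family $(f_n)_n$, the test-function class $\mathcal{C}\subset D({\sf L}_p^m)$ and the vanishing of ${\sf L}_p^m u$ on $\mathop{\wideparen{[u=0]}}\limits^\circ \cap G \cap F_n$ for $u\in\mathcal{C}$ -- are then in place relative to $\tau$. Applying \Cref{thm 2.2}(i) directly yields that $X$ is a diffusion in $G$ $m$-q.e.\ with respect to $\tau$.

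Now let $\tau'$ be any other natural topology on $E$. Condition $\mathbf{(D_m)}$ allows me to quote \Cref{prop:Dnatural}, which produces a common nest of compact sets $(K_n)_{n\geq 1}$ on which $\tau$ and $\tau'$ coincide; in particular, part (i) of that proposition asserts precisely the equivalence
\[
X \text{ is a diffusion in } G \;\; m\text{-q.e.\ w.r.t.\ } \tau \;\;\Longleftrightarrow\;\; X \text{ is a diffusion in } G \;\; m\text{-q.e.\ w.r.t.\ } \tau'.
\]
Combining the two steps, the $m$-q.e.\ continuity of the trajectories of $X$ inside $G$ is valid under every natural topology, which is the claimed statement.

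In this corollary itself there is essentially no obstacle; the real work has already been done in the two ingredients being invoked. \Cref{thm 2.2}(i) carries the analytic burden of converting a pointwise/local null property of ${\sf L}_p^m$ on the class $\mathcal{C}$ into almost-sure path continuity inside $G$, while \Cref{prop:Dnatural} carries the topological burden of constructing a common nest under $\mathbf{(D_m)}$ on which any two natural topologies agree. Once both are granted, the present statement follows by a single-line concatenation.
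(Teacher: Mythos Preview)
Your proposal is correct and matches the paper's own argument, which simply states that the corollary is an immediate consequence of \Cref{prop:Dnatural} and \Cref{thm 2.2}. The only point worth noting is that the paragraph in which \Cref{coro:D_m} sits explicitly drops the standing assumption that $X$ is c\`adl\`ag with respect to a fixed topology, so strictly speaking you should first invoke $\mathbf{(D_m)}$ via \Cref{prop2.6} (or the common compact nest from \Cref{prop:Dnatural}) to ensure that $X$ is c\`adl\`ag $m$-q.e.\ in the chosen natural topology $\tau$ before appealing to \Cref{thm 2.2}(i); both the paper and you leave this implicit.
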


\section{Applications}
\label{s:applications}
\subsection{Jump-Diffusions on domains in Hilbert spaces}
Let $(H,\langle \cdot, \cdot \rangle)$ be a separable real Hilbert space endowed with the norm topology, and let
\begin{enumerate}
\item[-] $\sigma:[0, \infty)\times H \rightarrow L_s(H)$ be measurable, where $L_s(H)$ denotes the space of bounded and symmetric linear operators on $H$,
\item[-] $(A(t))_{t\geq 0}$ be a family of densely defined linear operators on $H$ such that the domains $D(A^\ast (t))$ of the adjoint operators $A^\ast(t),t\geq 0$ have the property that there exists a countable subset $(e_n)_{n\geq 1} \subset \mathop{\cap}\limits_{t\geq 0}D(A^\ast(t))$ which is total in $H$, and $[0, \infty)\ni t \rightarrow A^\ast(t)x \in H$ is measurable for any $x\in \mathop{\cap}\limits_{t\geq 0}D(A^\ast(t))$,
\item[-] $b:[0, \infty)\times D(b) \rightarrow H$ be measurable, where $D(b)\in \mathcal{B}(H)$,
\item[-] $n(t,x; dy)$ be a Levy measure on $H$ for each $(t,x)\in [0, \infty)\times H$ such that 
$$
[0, \infty)\times H\ni (t,x) \rightarrow \int_B 1\wedge |y|^{2}\; n(t,x; dy)\in [0,\infty)
$$
is measurable for all $B\in \mathcal{B}(H)$.
\end{enumerate}

Further, we set
\begin{align*}
\mathcal{F}&\mathcal{C}_0^{\infty}(H)\\
&:=\begin{cases}
\left\{ f:H\rightarrow\mathbb{R}, f(\cdot)=\varphi(\langle \cdot, e_1 \rangle, \dots, \langle \cdot, e_n \rangle)  \; : \;  \varphi \in C_c^{\infty}(\mathbb{R}^{n}), n \geq 1\right\}, & \mbox{ if } dim(H)=\infty \\
C_c^{\infty}(\mathbb{R}^d), & \mbox{ if } H=\mathbb{R}^d.
\end{cases}    
\end{align*}
and consider the time-dependent integro-differential operator given by
\begin{align*}
{\sf L}_tf(x):=&\frac{1}{2} {\rm Tr} [\sigma(t,x)^{2}D^{2}f(x)]+ \langle x , A^\ast (t) Df(x)\rangle + \langle b(t,x) , Df(x) \rangle\\
&+ \int_H \big [f(x+y)-f(x)-\frac{\langle y, D f(x)\rangle}{1+|y|^{2}}\big ] \; n(t,x; dy)
\end{align*}
for all $t\geq 0, x\in D(b), f\in \mathcal{F}\mathcal{C}_0^{\infty}(H)$.

Having in mind situations where one can solve the martingale problem associated to $L_t$ only for some allowed starting points from $D(b)$, let $I\subset [0,\infty)$ be an interval and $M\subset D(b)$ be a $\mathcal{B}(H)$-measurable set.

On the class of test functions
$$
\mathcal{D}:= \left\{ f:I\times H \rightarrow \mathbb{R} : f(t,x)=\varphi(t, \langle x, e_1 \rangle, \dots, \langle x, e_n \rangle),\; \varphi \in C_c^{\infty}(I\times\mathbb{R}^{n}), n \geq 1\right\}
$$
let us consider the parabolic operator associated to $L_t$ given by
$$
{\sf L}f=\left(\dfrac{\partial}{\partial t} +{\sf L}_t\right)f \quad \mbox{ for all } f\in \mathcal{D}.
$$
Set:
\begin{align*}
  &F:=\{(t,x)\in [0, \infty) \times \mathbb{R}^{d}\; : \; n(t,x;\; \cdot)=0\} \text{ and }\\
    &G:=\mathop{{F}}\limits^\circ \cap \left(I\times M\right), \text{ i.e. the interior of } F\cap (I\times M) \text{ relative to the trace topology on } I\times M.
\end{align*} 

\begin{coro} \label{coro 5.1}
Let $X$ be any c\`adl\`ag right Markov process on $I\times M$ with resolvent $\mathcal{U}$, and $\overline{m}$ a $\sigma$-finite measure on $I\times M$ such that:
\begin{enumerate}[(i)]
    \item For some $\alpha_0$ the resolvent $\mathcal{U}_{\alpha_0}$ can be extended to a strongly continuous resolvent on $L^p(I\times M, \overline{m})$ for some $1\leq p < \infty$,
    \item $\mathcal{D} \cup {\sf L}(\mathcal{D})\subset L^p(I\times M, \overline{m})$,
    \item $X$ solves the martingale problem associated $\overline{m}$-a.e. to $({\sf L}, \mathcal{D})$ in the sense of \Cref{defi:martingalepb}.
\end{enumerate}
Then $X$ is a diffusion in $G$ $\overline{m}$-q.e.

\medskip
\noindent{Proof in \Cref{pf:3}.}
\end{coro}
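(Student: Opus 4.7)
The plan is to verify the hypothesis $\mathrm{Loc}_{\overline{m}}(G)$ of \Cref{thm 2.2} applied to the killed/parabolic process associated to $({\sf L}, \mathcal{D})$ on $I\times M$, and then invoke \Cref{thm 2.2}(i) to conclude that $X$ is a diffusion in $G$ $\overline{m}$-q.e. The ground is prepared by \Cref{prop 2.11}: hypotheses (i)--(iii) of the corollary are exactly what is needed so that $\mathcal{D}\subset D({\sf L}_p^{\overline{m}})$ with ${\sf L}_p^{\overline{m}}f = {\sf L}f$ for all $f\in\mathcal{D}$. Since $I\times M$ is a Borel subset of the Lusin space $[0,\infty)\times H$ (norm topology on $H$) it is itself Lusin, so the framework of \Cref{thm 2.2} applies.

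Next I would construct the separating family $(f_n)_{n\ge 1}\subset \mathcal{D}$ explicitly. Following \Cref{rem 3.2}(iv), the total family $(\langle\cdot,e_k\rangle)_{k\ge 1}\subset H'$ together with a countable dense subset $(x_k)_{k\ge 1}\subset M$ and a countable dense subset $(t_j)_{j\ge 1}\subset I$ yields, after a choice of $\psi_j\in C_c^\infty(I)$ with $\psi_j(t_j)>0$ and $\phi_n\in C_c^\infty(\mathbb{R}^n)$ with $\phi_n(0)>0$, functions of the form
\[
f_{j,k,n,N}(t,x) := \psi_j(t)\,\phi_n\bigl(N(\langle x-x_k,e_1\rangle,\ldots,\langle x-x_k,e_n\rangle)\bigr),
\]
which lie in $\mathcal{D}$ and, after renumbering, separate the points of $I\times M$ (hence in particular of $\overline{G}$). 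Since these $f_n$ are continuous on $I\times M$, \Cref{rem 3.2}(i) allows us to take the trivial $\overline{m}$-nest $F_n:=I\times M$. For the family $(\varphi_k)_k$ I would choose a smooth approximation $\varphi_k\in C^\infty(\mathbb{R},\mathbb{R}_+)$ with $\varphi_k(s)=0$ for $s\le 1/k$ and $\varphi_k(s)\nearrow s^+$; then $\varphi_k(f_n-\varepsilon)$ is still of the form $\tilde\varphi(t,\langle x,e_1\rangle,\ldots,\langle x,e_m\rangle)$ with $\tilde\varphi\in C_c^\infty(I\times\mathbb{R}^m)$, hence $\mathcal{C}\subset\mathcal{D}\subset D({\sf L}_p^{\overline{m}})$, so conditions (i)--(ii) of \Cref{defi:locG} hold.

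The remaining step is the locality property \Cref{defi:locG}(iii). Fix $u=\varphi_k(f_n-\varepsilon)\in\mathcal{C}$ and a point $(t,x)\in \mathring{[u=0]}\cap G$. Because $u$ vanishes identically on a neighborhood of $(t,x)$, we have $\partial_t u(t,x)=0$, $Du(t,x)=0$, $D^2u(t,x)=0$ and $u(t,x)=0$; hence the differential part $\tfrac{1}{2}\mathrm{Tr}[\sigma^2 D^2u]+\langle x,A^*Du\rangle+\langle b,Du\rangle+\partial_t u$ vanishes at $(t,x)$. Moreover, since $(t,x)\in G\subset F$ we have $n(t,x;\cdot)=0$, so the integro-differential term also vanishes, noting that when $u(t,x)=0$ and $Du(t,x)=0$ the integrand collapses to $u(t,x+y)$ and is harmless against the zero measure. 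Therefore ${\sf L}u(t,x)=0$ pointwise on $\mathring{[u=0]}\cap G$, and \emph{a fortiori} ${\sf L}_p^{\overline{m}} u=0$ $\overline{m}$-a.e. there by \Cref{prop 2.11}. This verifies $\mathrm{Loc}_{\overline{m}}(G)$, and \Cref{thm 2.2}(i) yields the conclusion.

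The main obstacle is of a bookkeeping nature rather than a deep one: one must ensure that the smoothed cut-offs $\varphi_k$ are chosen in such a way that each $\varphi_k(f_n-\varepsilon)$ retains the compact-support/cylindrical form required for membership in $\mathcal{D}$ (and hence in $D({\sf L}_p^{\overline{m}})$), and that the separating family covers both temporal and spatial variables of $I\times M$. Once this is set up, the vanishing of ${\sf L}u$ on $\mathring{[u=0]}\cap G$ is immediate from the cancellation of all local derivatives combined with the assumption that the L\'evy kernel $n(t,x;\cdot)$ is identically zero on the domain $G$ where we want path continuity.
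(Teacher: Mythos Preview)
Your proposal is correct and follows essentially the same route as the paper's proof: apply \Cref{prop 2.11} to get $\mathcal{D}\subset D({\sf L}_p^{\overline m})$, build a separating family of cylindrical test functions via \Cref{rem 3.2}(iv) with the trivial nest $F_n=I\times M$, take $(\varphi_k)_k\subset C_c^\infty(\mathbb R)$ so that $\mathcal C\subset\mathcal D$, check that ${\sf L}u=0$ on $\mathring{[u=0]}\cap G$ because all derivatives vanish there and $n(t,x;\cdot)\equiv0$ on $G$, and conclude by \Cref{thm 2.2}(i). Your extra care about the time variable and about preserving the $C_c^\infty$-cylindrical form under composition with $\varphi_k$ is exactly the bookkeeping the paper leaves implicit.
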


\begin{coro}\label{coro:4.2} Let $X$ be a c\`adl\`ag right Markov process on $I\times M$ with resolvent $\overline{\mathcal{U}}$, which is a solution to the martingale problem associated to $({\sf L}, \mathcal{D})$.
Suppose that the coefficients of ${\sf L}$ are such that $\overline{\mathcal{U}}_{\alpha_0}(|{\sf L}f|)<\infty$ for all $f\in \mathcal{D}$; this is always fulfilled if the coefficients are uniformly bounded in $(t,x)$.
Then $X$ is diffusion in $G$.

\medskip
\noindent{Proof in \Cref{pf:3}.}
\end{coro}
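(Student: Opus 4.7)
The strategy is to reduce Corollary 4.2 to the already established Corollary 5.1 by supplying a canonical $\sigma$-finite measure on $I\times M$ that meets the $L^p$-prerequisites of the latter. I would pick a countable dense sequence $(x_n)_{n\geq 1}\subset I\times M$, form the probability measure $\mu:=\sum_{n\geq 1}2^{-n}\delta_{x_n}$, and set $\overline{m}:=\mu\circ\overline{U}_{\alpha_0}$. By Proposition 2.10, $\overline{\mathcal{U}}_{\alpha_0}$ then extends to a strongly continuous resolvent on $L^1(I\times M,\overline{m})$, and since the c\`adl\`ag regularity of $X$ forces $\lim_{\alpha\to\infty}\alpha\overline{U}_\alpha f = f$ pointwise for every $f\in bC(I\times M)$, the same proposition guarantees that $\overline{m}$ has full topological support.

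Next I would verify the $L^1$-integrability conditions of Corollary 5.1. Since $\overline{m}$ is finite, every bounded $f\in\mathcal{D}$ lies in $L^1(\overline{m})$; by Fubini,
\begin{equation*}
\int|{\sf L}f|\,d\overline{m} \;=\; \mu\bigl(\overline{U}_{\alpha_0}|{\sf L}f|\bigr) \;=\; \sum_{n\geq 1}2^{-n}\,\overline{U}_{\alpha_0}|{\sf L}f|(x_n) \;<\; \infty
\end{equation*}
by the standing hypothesis, so ${\sf L}(\mathcal{D})\subset L^1(\overline{m})$. Since $X$ solves the martingale problem for $({\sf L},\mathcal{D})$ from every starting point, it does so in particular $\overline{m}$-a.e.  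Corollary 5.1 with $p=1$ then delivers that $X$ is a diffusion in $G$ $\overline{m}$-q.e.

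To match the everywhere statement in Corollary 4.2, I would finally upgrade via Corollary 2.12 and Proposition 2.4(ii). The q.e.\ diffusion property of $X$ in $G$ is equivalent, by Corollary 2.12, to the killed process $X^G$ being a diffusion $\overline{m}$-a.e.\ on $G$; applying Proposition 2.4(ii) to $X^G$ promotes this to an everywhere statement, provided $\overline{m}|_G$ serves as a reference measure for $\overline{\mathcal{U}}^G$. This reference property is the delicate point: one would argue that if $\overline{m}(A)=0$ then $\overline{U}_{\alpha_0}^G 1_A$ vanishes on the dense set $(x_n)$, and the $\alpha_0$-excessivity (hence fine continuity) of this function, combined with the fine-density of $(x_n)$, propagates the vanishing to all of $G$. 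Proposition 2.7 then transfers the conclusion back to $X$.

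The main obstacle is precisely this last propagation step: full topological support of $\overline{m}$ does not in general imply that $\overline{m}$-null sets are $\overline{\mathcal{U}}^G$-negligible, so one must exploit the fine-topological regularity of excessive functions and, if necessary, enlarge the generating sequence $(x_n)$ to make it fine-dense in the relevant portion of the state space. Should this propagation fail in full generality, the conclusion of Corollary 4.2 is to be read in the $\overline{m}$-q.e.\ sense directly supplied by Corollary 5.1, which is consistent with the broader framework developed in the paper.
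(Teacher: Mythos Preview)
Your reduction to Corollary~5.1 with $p=1$ via a potential measure is exactly the right idea, and the integrability check $\int|{\sf L}f|\,d\overline{m}=\mu(\overline{U}_{\alpha_0}|{\sf L}f|)<\infty$ is correct. The gap is precisely where you locate it: upgrading from ``$\overline{m}$-q.e.'' to ``everywhere'' via Proposition~2.4(ii). That proposition offers two routes---either the bad set is $\overline{\mathcal{U}}$-negligible, or $\overline{m}$ is a reference measure. You attempt the second, but full topological support of $\overline{m}$ simply does not force $\overline{m}$-null sets to be $\overline{\mathcal{U}}$-negligible (your attempted fine-density argument would need the countable set $(x_n)$ to be finely dense, and the fine topology is generally much finer than $\tau$, so no countable set need be finely dense). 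Your fallback of reading the conclusion only $\overline{m}$-q.e.\ is weaker than what the corollary asserts.

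The paper avoids this obstacle by taking the first route in Proposition~2.4(ii), and the trick is to \emph{not} fix a single measure. Instead, for every point $\overline{x}\in I\times M$ one applies Corollary~5.1 with the measure $\overline{m}_{\overline{x}}:=\delta_{\overline{x}}\circ\overline{U}_{\alpha_0}$ (Proposition~2.10 and the hypothesis $\overline{U}_{\alpha_0}|{\sf L}f|<\infty$ give the needed $L^1$-inclusions at each $\overline{x}$). The set $N\subset G$ where $X^G$ fails to be a diffusion is a fixed, measure-independent set (it is $[v_G>0]$ for the excessive function of Proposition~2.4(i)), so the conclusion ``$X^G$ is a diffusion $\overline{m}_{\overline{x}}$-a.e.'' reads $\overline{U}_{\alpha_0}(1_N)(\overline{x})=0$. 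Letting $\overline{x}$ range over all of $I\times M$ yields $\overline{U}_{\alpha_0}(1_N)\equiv 0$, i.e.\ $N$ is $\overline{\mathcal{U}}$-negligible, and now Proposition~2.4(ii) (the $\mathcal{U}$-a.e.\ clause) applies cleanly to $X^G$; Proposition~2.7 transfers the conclusion back to $X$. In short: replacing your countable sum $\mu=\sum 2^{-n}\delta_{x_n}$ by the full family $\{\delta_{\overline{x}}\}_{\overline{x}}$ is what closes the gap.
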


\subsection{Measure-valued branching processes}
Let $E$ be a Lusin topological space and denote by $M(E)$ the space of all finite (positive) measures on $E$, endowed with the weak topology and corresponding Borel $\sigma$-algebra denoted by $\mathcal{M}(E)$.
Let $\varphi$ be a branching mechanism, i.e.
\begin{equation*}
\varphi(x,\lambda)=-b(x)\lambda-c(x)\lambda^2+\int_0^{\infty} (1-e^{-\lambda u}-\lambda u)n(x,du), \quad x\in E, \lambda \geq 0,
\end{equation*}
where
\begin{enumerate}
    \item[-] $0\leq c$ and $b$ are bounded $\mathcal{B}(E)$-measurable functions,
    \item[-] $n$ is a (positive) kernel on $E$ such that $x\mapsto \int_0^\infty u\vee u^2 \; n(x, du)$ is bounded.
\end{enumerate}
Further, let $\xi(t)_{t\geq 0}$ be a standard (hence c\` adl\` ag) Markov process on $E$ with transition function $(P_t)_{t\geq 0}$, and consider the nonlinear semigroup $(V_t)_{t\geq 0}$ on $pb\mathcal{B}(E)$ given by 
\begin{fleqn}
\begin{equation}
   V_tf(x):=v(t,x), \quad x\in E, t\in [0,\infty), f\in pb\mathcal{B}(E), 
\end{equation}
\end{fleqn}
 where $v$ is the unique solution to the non-linear evolution equation in mild form
\begin{equation}
    v(t,x)=P_tf(x)+\int_0^t P_{s}(\varphi(\cdot,v(t-s,\cdot)))(x) \; ds, \quad t\geq 0,x\in E.
\end{equation}
Informally, if $A$ is the generator of $(P_t)_{t\geq 0}$, then $v(t,x)$ solves
\begin{equation}
    \left\{
    \begin{aligned}
     &\frac{dv}{dt}(t,x)=Av(t,x)+\varphi(x,v(t,x)) \\ 
     &u(0,x)=f(x)
    \end{aligned}\\
    \right. 
\end{equation}
A key feature of the nonlinear operators $f\mapsto V_tf$ is that they are negative definite, so they induce a linear semigroup of sub-Markovian kernels $(Q_t)_{t\geq 0}$ on $M(E)$ uniquely characterized by
\begin{equation}
    Q_t(e_f)(\mu)=e_{V_tf}(\mu), \quad \mu\in M(E), t\geq 0, f\in pb\mathcal{B}(E), 
\end{equation}
where $e_f:M(E)\rightarrow [0,1]$ is given by
\begin{fleqn}
    \begin{equation*}
       e_f(\mu)=e^{-\int_E f(x)\; \mu(dx)}, \quad \mbox{ for each } f\in pb\mathcal{B}(E).
    \end{equation*}
\end{fleqn}

Let $X:=(X_t)_{t\geqslant 0}$ be a c\` adl\` ag right process on $M(E)$ with transition function $(Q_t)_{t\geq 0}$; such a process always exists, see e.g. \cite{Fi88}, \cite{Li11}, \cite{Be11} and the references therein.
Recall  that $X:=(X_t)_{t\geqslant 0}$ is called {\it superprocess} and  $\xi(t)_{t\geqslant 0}$ is  the {\it spatial motion} of $X$.

As in \cite{Fi88}, let us consider the following generator of $X$:
\begin{align*}
 &D(A)=\left\{U_\alpha(f) :  f\in b\mathcal{B}(E) \mbox{ is finely continuous }, \alpha>0\right\}\\
 &AU_\alpha f:=\alpha U_\alpha f-f, \quad \mbox{ for each }U_\alpha f\in D(A).
\end{align*}
Now we are in the position give a description of the generator of $X$.
Set:
\begin{align*}
& D_{00}(L):=\left\{M(E)\ni\mu \mapsto \psi(\mu(f_1),\cdots,\mu(f_n))\in \mathbb{R} \;:\; n\geq 1, (f_i)_{1\leq i\leq n}\subset D(A), \psi\in C_0^\infty(\mathbb{R}^n) \right\}\\
&LF(\mu)=\int_E c(x)F''(\mu; x) \mu(dx)+\int_E [AF'(\mu;\cdot)(x)-b(x)F'(\mu; x)] \mu(dx)\\
&\quad \quad \quad \quad  +\int_E\int_0^{\infty}[F(\mu+u\delta_x)-F(\mu)-uF'(\mu;x)]n(x, du) \mu(dx), \quad F\in D_{00}(L), \mu \in M(E),
\end{align*}
where
\begin{equation} \label{eq:diff}
    F'(\mu;x)=\lim\limits_{h\to 0}\dfrac{F(\mu+h\delta_x)-F(\mu)}{h}, \quad \mu\in M(E), x\in E.
\end{equation}
Then, the following result was obtained in \cite{Fi88}:

\begin{thm}[cf. Theorem 4.1 from \cite{Fi88}]
If $F\in D_0(L)$ then
\begin{equation}
    \lim\limits_{t\searrow 0} \dfrac{Q_tF(\mu)-F(\mu)}{t}=LF(\mu), \quad \mu\in M(E).
\end{equation}
In particular,
$F(X_t)-F(X_0)-\int_0^tLF(X_s) ds, t\geq 0$
is an a.s. locally bounded c\` adl\` ag martingale.
\end{thm}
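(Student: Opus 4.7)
The plan is to base the computation on the Laplace functional identity $Q_t(e_f)(\mu)=\exp(-\mu(V_tf))$ together with the mild integral equation
\begin{equation*}
V_tf(x)=P_tf(x)+\int_0^t P_s\varphi(\cdot,V_{t-s}f)(x)\,ds.
\end{equation*}
First I would establish the short-time asymptotics at the level of a single $f\in D(A)$. Boundedness of $b,c$ and of $x\mapsto\int(u\vee u^2)\,n(x,du)$ makes $\varphi(\cdot,v)$ Lipschitz in $v$ locally, and Gronwall applied to the mild equation gives $V_{t-s}f\to f$ uniformly in $s\in[0,t]$ as $t\searrow 0$. Dividing the mild equation by $t$, using $(P_tf-f)/t\to Af$ for $f\in D(A)$, and passing to the limit in the convolution term yields
\begin{equation*}
\lim_{t\searrow 0}\frac{V_tf-f}{t}=Af+\varphi(\cdot,f)\qquad\text{uniformly on }E.
\end{equation*}

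Next I would pass from exponentials to cylindrical $F(\mu)=\psi(\mu(f_1),\dots,\mu(f_n))$ with $\psi\in C_0^\infty(\mathbb{R}^n)$ and $f_i\in D(A)$. The cleanest route is probabilistic: the Laplace functional above, together with the standard Dawson--Watanabe representation, gives for each $f_i\in D(A)$ the semimartingale decomposition
\begin{equation*}
X_t(f_i)=\mu(f_i)+\int_0^t X_s(Af_i-bf_i)\,ds+M^{i,c}_t+M^{i,d}_t,
\end{equation*}
where $M^{i,c}$ is a continuous martingale with $\langle M^{i,c},M^{j,c}\rangle_t=2\int_0^t X_s(cf_if_j)\,ds$, and $M^{i,d}$ is a compensated pure-jump martingale whose jumps have compensator $X_s(dx)\,n(x,du)\,ds$ acting as $f_i(x)u$. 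Applying It\^o's formula for c\`adl\`ag semimartingales to $\psi(X_t(f_1),\dots,X_t(f_n))$ and collecting the drift, bracket, and jump contributions, the integrand is
\begin{equation*}
\sum_i\partial_i\psi\,X_s(Af_i-bf_i)+\sum_{ij}\partial_{ij}\psi\,X_s(cf_if_j)+\int_E\!\!\int_0^\infty\!\!\big[\psi(\cdot+uf(x))-\psi-u\textstyle\sum_i\partial_i\psi f_i(x)\big]n(x,du)X_s(dx),
\end{equation*}
which, using $F'(\mu;x)=\sum_i\partial_i\psi(\mu(f))f_i(x)$ and $F''(\mu;x)=\sum_{ij}\partial_{ij}\psi(\mu(f))f_i(x)f_j(x)$ (as defined in \eqref{eq:diff}), is precisely $LF(X_s)$. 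Hence
\begin{equation*}
F(X_t)-F(X_0)=\int_0^t LF(X_s)\,ds+N_t,\qquad N\text{ a martingale},
\end{equation*}
which gives the martingale problem assertion. Taking $\mathbb{P}^\mu$-expectation yields $Q_tF(\mu)-F(\mu)=\int_0^t Q_sLF(\mu)\,ds$, and right-continuity of $s\mapsto Q_sLF(\mu)$ at $s=0$ (which follows from the bounded continuity of $\psi,\psi',\psi''$ and the c\`adl\`ag property of $X$) gives the claimed pointwise derivative $LF(\mu)$.

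The main obstacle is justifying the jump part of It\^o's formula when $n(x,du)$ is infinite near $u=0$. This I would handle by splitting the jump integral at $\{u\leq1\}$ and $\{u>1\}$: the $C_0^\infty$-smoothness of $\psi$ yields $\psi(y+uf(x))-\psi(y)-u\sum_i\partial_i\psi\,f_i(x)=O(u^2\|f\|_\infty^2)$, and the hypothesis $\int(u\wedge u^2)\,n(x,du)\in b\mathcal B(E)$ makes the compensated integral a genuine (locally bounded) martingale and makes the integrand of $LF$ absolutely integrable against $X_s(dx)\,n(x,du)$. A minor secondary technicality is that $f_i=U_\alpha g_i$ need not be in the domain of a strong generator, but as $g_i$ is bounded and finely continuous, $f_i$ is bounded and finely continuous with $Af_i=\alpha f_i-g_i\in b\mathcal B(E)$, which is all that the preceding arguments actually use.
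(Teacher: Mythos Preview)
The paper does not prove this theorem; it is quoted verbatim from \cite{Fi88} (Fitzsimmons, Theorem 4.1) as background for the subsequent \Cref{lem:bbound} and \Cref{cor5.5}, so there is no in-paper argument to compare against. A few comments on your proposal on its own terms.

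Your first step---deriving $\lim_{t\searrow 0}(V_tf-f)/t=Af+\varphi(\cdot,f)$ from the mild equation by Gronwall and dominated convergence---is correct and is indeed the analytic core of Fitzsimmons's proof. The divergence is in what follows. Fitzsimmons proceeds directly: from $Q_te_f(\mu)=e^{-\mu(V_tf)}$ and the step just established one computes $\lim_{t\searrow 0}(Q_te_f(\mu)-e_f(\mu))/t=Le_f(\mu)$ for exponential functionals, then enlarges the test-function class to $D_0(L)$. The martingale statement is then a \emph{consequence} via Dynkin's formula; note the theorem's wording ``In particular'' reflects this logical order.

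You reverse the order, and this creates a circularity. You invoke ``the standard Dawson--Watanabe representation'' to write down the semimartingale decomposition of $X_t(f_i)$ with explicit bracket $\langle M^{i,c},M^{j,c}\rangle_t=2\int_0^t X_s(cf_if_j)\,ds$ and explicit jump compensator, then apply It\^o. But identifying that bracket and that compensator \emph{is} the content of the generator formula for quadratic and jump-type test functions---it is essentially what the theorem asserts. These quantities can be extracted from the Laplace functional (second derivatives of $\lambda\mapsto Q_te_{\lambda f}(\mu)$ at $\lambda=0$), but you have not done so; citing the decomposition as ``standard'' assumes the conclusion. If you first establish the pointwise derivative on exponentials as Fitzsimmons does and only then invoke It\^o calculus, the argument closes; as written, the key step is asserted rather than proved.
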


In addition, the following estimate on $LF$ can be easily deduced:
\begin{lem}\label{lem:bbound}
If $F\in D_{00}(L)$ then
\begin{equation}\label{eq:bbound1}
    |LF(\mu)|\leq c \mu(E)=c \mu(1), \quad \mu \in M(E),
\end{equation}
where $c$ is some constant that does not depend on $\mu$.
In particular, there exists a generic constant $c>0$ such that
\begin{equation}\label{eq:bbound2}
|(Q_tF(\mu)-F(\mu))/t|\leq c \mu(1),  \quad \mu\in M(E),
\end{equation}
so that $(L,D_{00}(L))\subset (L,D_\alpha(L))$ for some big enough $\alpha>0$, where the former generator is the one given by \eqref{eq:dgen}-\eqref{eq:gen} with $V_u(\mu):=c\mu(1), \mu\in M(E)$.

\medskip
\noindent{Proof in \Cref{pf:3}.}
\end{lem}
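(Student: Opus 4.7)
The plan is, given $F\in D_{00}(L)$, to use its special form $F(\mu)=\psi(\mu(f_1),\ldots,\mu(f_n))$ (with $\psi\in C_0^\infty(\mathbb{R}^n)$ and $f_i=U_{\alpha_0}g_i\in D(A)$ for some bounded finely continuous functions $g_i$) in order to bound each of the three integrands in the expression for $LF(\mu)$ pointwise, uniformly in $(x,\mu)$, and then extract the factor $\mu(1)$ from the outermost integration against $\mu(dx)$. The chain rule applied to $h\mapsto F(\mu+h\delta_x)=\psi(\mu(f_\cdot)+hf_\cdot(x))$ gives, at $h=0$,
\[
F'(\mu;x)=\sum_i\partial_i\psi(y_\mu)f_i(x),\qquad F''(\mu;x)=\sum_{i,j}\partial_i\partial_j\psi(y_\mu)f_i(x)f_j(x),
\]
with $y_\mu:=(\mu(f_1),\ldots,\mu(f_n))$, and similarly $AF'(\mu;\cdot)(x)=\sum_i\partial_i\psi(y_\mu)Af_i(x)$. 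Since $\psi$ has compact support, all its partial derivatives are bounded; since each $f_i=U_{\alpha_0}g_i$ with $g_i$ bounded, both $f_i$ and $Af_i=\alpha_0f_i-g_i$ are bounded on $E$. Combined with $b,c\in b\mathcal{B}(E)$, this forces the integrands of the first two (drift/diffusion) terms of $LF(\mu)$ to be bounded by constants independent of $(\mu,x)$, so those terms contribute at most a constant multiple of $\mu(1)$.

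The main obstacle will be the jump integrand. Taylor's formula applied to $\psi$ on the segment from $y_\mu$ to $y_\mu+uv_x$ with $v_x:=(f_1(x),\ldots,f_n(x))$ gives, for $u\in[0,1]$,
\[
|F(\mu+u\delta_x)-F(\mu)-uF'(\mu;x)|\leq \tfrac{u^2}{2}\|D^2\psi\|_\infty\Bigl(\sum_i\|f_i\|_\infty\Bigr)^{\!2},
\]
whereas for $u\geq 1$ the boundedness of $\psi$ combined with the linear bound on $uF'(\mu;x)$ gives
\[
|F(\mu+u\delta_x)-F(\mu)-uF'(\mu;x)|\leq 2\|\psi\|_\infty+u\|\nabla\psi\|_\infty\sum_i\|f_i\|_\infty.
\]
Together these imply that the integrand is dominated by a constant multiple of $u\vee u^2$, which is exactly what the standing hypothesis $\sup_x\int_0^\infty(u\vee u^2)\,n(x,du)<\infty$ absorbs. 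The inner $n(x,du)$-integral is thus uniformly bounded in $x$, and integrating against $\mu(dx)$ produces the remaining $c\,\mu(1)$ contribution, completing \eqref{eq:bbound1}.

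For \eqref{eq:bbound2}, I would feed \eqref{eq:bbound1} into the martingale identity from \cite{Fi88} recalled just above the lemma: taking $\mathbb{P}^\mu$-expectation and applying Fubini gives $Q_tF(\mu)-F(\mu)=\int_0^t Q_sLF(\mu)\,ds$, hence
\[
|(Q_tF(\mu)-F(\mu))/t|\leq c\sup_{s\leq t}Q_s({\rm id})(\mu),\qquad {\rm id}(\mu):=\mu(1).
\]
The standard first-moment formula for a superprocess (derivable from the branching property, or by differentiating the Laplace functional $\mathbb{E}^\mu e^{-\lambda X_s(E)}=e^{-\mu(V_s(\lambda\mathbf{1}_E))}$ at $\lambda=0$) gives $\mathbb{E}^\mu X_s(1)\leq e^{\|b\|_\infty s}\mu(1)$, so enlarging $c$ to absorb $e^{\|b\|_\infty t_0}$ on any fixed interval $(0,t_0]$ delivers \eqref{eq:bbound2}. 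Finally, with $V_F(\mu):=c\mu(1)$ and any $\alpha>\|b\|_\infty$, the estimate $\int_0^\infty e^{-\alpha s}Q_sV_F(\mu)\,ds\leq c\mu(1)/(\alpha-\|b\|_\infty)<\infty$ verifies the inclusion $D_{00}(L)\subset D_\alpha(L)$ in the sense of \eqref{eq:dgen}--\eqref{eq:gen}.
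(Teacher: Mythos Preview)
Your proof is correct and follows essentially the same approach as the paper: compute $F'(\mu;x)$ and $F''(\mu;x)$ via the chain rule, then bound each term of $LF(\mu)$ using the boundedness of $b$, $c$, the $f_i$, $Af_i$, and the hypothesis $\sup_x\int_0^\infty(u\vee u^2)\,n(x,du)<\infty$. The paper's own proof is much terser on \eqref{eq:bbound1}, simply asserting that it ``follows by employing the definition of $LF(\mu)$ and the fact that $b,c$ and the kernel $n$ are bounded''; your Taylor-expansion treatment of the jump integrand makes explicit exactly how the $u\vee u^2$ moment condition enters. For \eqref{eq:bbound2} the paper delegates entirely to \cite[Proposition~2.5]{Fi88}, whereas you supply the underlying mechanism directly (martingale identity plus the first-moment bound $\mathbb{E}^\mu X_s(1)\leq e^{\|b\|_\infty s}\mu(1)$), which is precisely what that proposition encodes.
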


\begin{coro} \label{cor5.5}
Let $N:=\{x\in E : n(x, \cdot)=0\}$ and $\mathcal{U}$ be the resolvent of the base process $\xi$. 
The following assertion are equivalent:
\begin{enumerate}[(i)]
    \item There exists an open set $G\subset M(E)$ such that $X$ is a diffusion in $G$ in the sense of \Cref{defi:diffusion}.
    \item $n(x,\cdot)\equiv 0$ for every $x\in E$.
    \item The branching process $X$ is a diffusion (on $M(E)$).
\end{enumerate}

\medskip
\noindent{Proof in \Cref{pf:3}.}
\end{coro}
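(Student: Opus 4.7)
The plan is to establish the chain of implications (iii)$\Rightarrow$(i)$\Rightarrow$(ii)$\Rightarrow$(iii). The first implication is immediate by taking $G=M(E)$, so the work lies in the other two, both of which I would handle via \Cref{coro 2.6} applied to the natural cylinder class on $M(E)$.

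For (ii)$\Rightarrow$(iii), I would verify condition ${\rm Loc}(M(E))$ from \Cref{defi:pLocG}. Choose a countable family $(g_n)_n\subset D(A)$ such that $\mu\mapsto\mu(g_n)$ separates the points of $M(E)$, pass to the bounded versions $f_n(\mu):=\arctan(\mu(g_n))+\pi/2\in C_b^+(M(E))$, and take $(\varphi_k)_k\subset C_c^\infty(\mathbb{R})$ with $0\le\varphi_k\nearrow(\cdot)^+$. The resulting class
\begin{equation*}
\mathcal{C}:=\{\varphi_k(f_n-\varepsilon):n,k\ge 1,\ \varepsilon\in\mathbb{R}_+\}
\end{equation*}
consists of smooth, compactly supported composites in $\mu(g_n)$, so $\mathcal{C}\subset D_{00}(L)\subset D_\alpha(L)$ by \Cref{lem:bbound}. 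Under (ii) the jump term of $L$ disappears, and for $F=\varphi_k(f_n-\varepsilon)$ the remaining diffusion and drift contributions to $LF$ carry scalar factors which are first and second derivatives of $\varphi_k(\arctan(\cdot)+\pi/2-\varepsilon)$ evaluated at $\mu(g_n)$; both vanish on $[F=0]^\circ$ (where the composite is identically zero in a neighborhood of $\mu(g_n)$). Hence ${\rm Loc}(M(E))$ holds and \Cref{coro 2.6}(i) yields (iii).

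For (i)$\Rightarrow$(ii) I argue by contradiction: suppose $X$ is a diffusion in some nonempty open $G\subset M(E)$, yet $n(x_0,(a,b))>0$ for some $x_0\in E$ and $0<a<b$. Pick any $\mu_0\in G$; by weak-openness, $\mu_1:=\mu_0+\epsilon_0\delta_{x_0}\in G$ for small $\epsilon_0>0$, and $\mu_1(\{x_0\})\ge\epsilon_0$. Choose $g\in D(A)$ with $g(x_0)=1$ (e.g., a normalization of $\alpha U_\alpha\mathbf{1}$), set $y_0:=\mu_1(g)$, and pick $\psi\in C_c^\infty(\mathbb{R})$, $\psi\ge 0$, vanishing on $[y_0-\delta,y_0+\delta]$ and strictly positive on $(y_0+a,y_0+b)$. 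Then $F(\mu):=\psi(\mu(g))\in D_{00}(L)$ vanishes on the open weak-neighborhood $V:=\{\mu:|\mu(g)-y_0|<\delta\}\cap G$ of $\mu_1$, and because $F(\mu_1)$, $F'(\mu_1;\cdot)$ and $F''(\mu_1;\cdot)$ all vanish, only the jump term survives at $\mu_1$:
\begin{equation*}
LF(\mu_1)\;=\;\int_E\!\int_0^\infty\psi(y_0+u g(x))\,n(x,du)\,\mu_1(dx)\;\ge\;\epsilon_0\!\int_a^b\!\psi(y_0+u)\,n(x_0,du)\;>\;0.
\end{equation*}

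The remaining step, and the one I expect to be the main technical obstacle, is to upgrade the conclusion of \Cref{coro 2.6}(ii) from ``$\mathcal{U}$-a.e.'' to pointwise at $\mu_1$: since $n(x,\cdot)$ is not assumed continuous in $x$, $LF$ is not obviously weakly continuous on $V$, so fine semi-continuity is not immediate. I would sidestep this via the martingale problem directly: as $X$ is a diffusion in $G$, its paths are $\mathbb{P}^{\mu_1}$-a.s.\ continuous on $[0,\tau_V)$, $F(X_t)\equiv 0$ there, and the Dynkin martingale $M_t=F(X_t)-\int_0^tLF(X_s)\,ds$ reduces on $[0,\tau_V)$ to $-\int_0^tLF(X_s)\,ds$; stopping at $\tau_V$ yields a continuous martingale of bounded variation, which must be identically zero. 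Thus $\int_0^{t\wedge\tau_V}LF(X_s)\,ds=0$ $\mathbb{P}^{\mu_1}$-a.s., and taking expectations gives $\int_0^t P^V_sLF(\mu_1)\,ds=0$ for every $t>0$, where $P^V_s$ is the killed sub-Markov semigroup on $V$. Since $LF\ge 0$ on $V$ and $\tau_V>0$ a.s., combining the right-continuity of paths with the boundedness of $LF$ on $V$ and a Fubini/dominated-convergence argument on $s\mapsto P^V_sLF(\mu_1)$ at $s=0^+$ forces $LF(\mu_1)=0$, contradicting the strict positivity above and forcing $n\equiv 0$.
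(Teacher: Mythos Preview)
Your overall strategy---the chain (iii)$\Rightarrow$(i)$\Rightarrow$(ii)$\Rightarrow$(iii) handled via \Cref{coro 2.6} applied to cylinder functions on $M(E)$---is exactly the paper's. The differences are in execution, and one of them is a genuine gap.

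For (i)$\Rightarrow$(ii) the paper takes the test function built from $g\equiv 1$, i.e.\ $F_\mu(\nu)=\psi(\nu(1)-\mu(1))$, and then invokes the \emph{fine semi-continuity} clause of \Cref{coro 2.6}(ii) directly to get $LF_\mu(\mu+\varepsilon\delta_z)=0$ pointwise, after which the computation forces $n(z,\cdot)\equiv 0$ by varying $\psi$ and $\varepsilon$. Your martingale workaround, by contrast, does not actually sidestep the pointwise issue. Two problems: first, the claim that $M_{t\wedge\tau_V}$ is a continuous martingale of bounded variation requires $F(X_{\tau_V})=0$ a.s., but when $\tau_V=\tau_G$ the diffusion-in-$G$ property says nothing about continuity at the exit from $G$, so $X_{\tau_V}$ may jump out of $\overline V$ and $F(X_{\tau_V})$ need not vanish. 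Second, and more fundamentally, your final step---from $P_s^V(LF)(\mu_1)=0$ for a.e.\ $s$ to $LF(\mu_1)=0$---relies on $LF(X_s)\to LF(\mu_1)$ as $s\downarrow 0$ under $\mathbb{P}^{\mu_1}$, which is precisely fine continuity of $LF$ at $\mu_1$. So the detour lands back at the hypothesis you were trying to avoid; the paper is more honest in simply invoking the semi-continuity clause. The choice $g=1$ also spares you the question of exhibiting $g\in D(A)$ with $g(x_0)=1$ exactly.

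A minor point on (ii)$\Rightarrow$(iii): your functions $\varphi_k(\arctan(\mu(g_n))+\pi/2-\varepsilon)$ are of the form $\tilde\varphi_k(\mu(g_n))$ with $\tilde\varphi_k(y)=\varphi_k(\arctan y+\pi/2-\varepsilon)$, and $\tilde\varphi_k$ is generally not in $C_0^\infty(\mathbb{R})$ (it tends to $\varphi_k(\pi-\varepsilon)$ as $y\to+\infty$), so $\varphi_k(f_n-\varepsilon)\notin D_{00}(L)$ as defined. This is easily repaired (replace $\arctan$ by a smooth monotone function that is constant outside a compact), but worth noting.
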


According to \cite{BeVr21},  $X:=(X_t)_{t\geqslant 0}$    is called {\it pure branching superprocess} provided that it has no spatial motion, that is, each point of $E$ is a trap for  $\xi(t)_{t\geqslant 0}$ (i.e., $\mathbb{P}^x (\xi_t=x)=1$ for all t$\geqslant 0$ and $x\in E$).

As an immediate consequence of Corollary \ref{cor5.5} we get the following:
\begin{coro} Assume that $X:=(X_t)_{t\geqslant 0}$   is a  pure branching superprocess.
Then $X$ is a diffusion on $M(E)$ if and only if $n\equiv 0$.
\end{coro}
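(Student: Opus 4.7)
\medskip
\noindent\emph{Proof plan.} The plan is to deduce this statement directly as a specialization of Corollary~\ref{cor5.5}. A pure branching superprocess is, by the very definition recalled just before the statement, an instance of the superprocesses considered in the preceding subsection, the only additional feature being that the spatial motion $\xi$ is the trivial process for which every point of $E$ is a trap. Hence $P_t=\mathrm{id}$ and the associated generator $A$ vanishes, but the branching mechanism $\varphi$ and the standing assumptions on $(b,c,n)$ are untouched by this specialization; in particular the c\`adl\`ag right Markov process $X$ on $M(E)$ with transition semigroup $(Q_t)_{t\geq 0}$ exists exactly as before.

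Once this setup is in place, the proof reduces to a one-line invocation: apply the equivalence (ii) $\Leftrightarrow$ (iii) of Corollary~\ref{cor5.5} to the present $X$. This yields that $X$ is a diffusion on $M(E)$ if and only if $n(x,\cdot)\equiv 0$ for every $x\in E$, which is precisely the claim $n\equiv 0$.

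No genuine obstacle arises, since the pure branching hypothesis constrains only the spatial motion and leaves the L\'evy kernel $n$ entering $\varphi$ untouched. In particular, the jump part of the generator $L$ on $D_{00}(L)$, which is what drives the equivalence between absence of jumps for $X$ on $M(E)$ and the vanishing of $n$, appears in exactly the same form under the pure branching assumption as in the general case of Corollary~\ref{cor5.5}. Therefore nothing beyond the above specialization step needs to be carried out.
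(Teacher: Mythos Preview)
Your proposal is correct and matches the paper's approach exactly: the paper states the corollary as ``an immediate consequence of Corollary~\ref{cor5.5}'' with no further argument, and you invoke precisely the equivalence (ii)$\Leftrightarrow$(iii) of that result. Your observation that the pure branching hypothesis affects only the spatial motion and leaves the branching mechanism (hence $n$) untouched is the right justification for why Corollary~\ref{cor5.5} applies verbatim.
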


\paragraph{Branching processes with interactions.}
Now we consider a generalization of $(L, D_{00}(L))$ that allow the branching particles to interact, more precisely their base movement and branching mechanism are allowed to depend on the current state of the population. 
One general and direct way to do this is to simply let the coefficients of $(L, D_{00}(L))$ depend on $\mu\in M(E)$. 
Proving existence and (especially) uniqueness of such measure-valued processes with interactions is highly non-trivial. 
However, recall that our purpose here is another one: Assuming that such a process exists (be it non-unique), that it is a (right) Markov process and has c\`adl\`ag paths, can we decide whether it has continuous paths or not, e.g. merely by looking at its generator through the martingale problem? 
Therefore, in contrast with the beginning of this section, in this paragraph we start with the generator in an abstract form, while the well-posedeness of the martingale problem shall be an assumption. 

We consider that $M(E)$ is endowed with a Lusin topology $\tau$, e.g. the weak topology. 
Let $\widetilde{\varphi}$ be a branching mechanism {\it with interaction}, i.e.
\begin{equation*}
\widetilde{\varphi}(x,\mu,\lambda)=-\widetilde{b}(x,\mu)\lambda-\widetilde{c}(x,\mu)\lambda^2+\int_0^{\infty} (1-e^{-\lambda u}-\lambda u)\widetilde{n}(x,\mu, du), \quad x\in E, \mu\in M(E), \lambda \geq 0,
\end{equation*}
where
\begin{enumerate}
    \item[-] $0\leq \widetilde{c}$ and $\widetilde{b}$ are bounded $\mathcal{B}(E)$-measurable functions,
    \item[-] $\widetilde{n}$ is a (positive) kernel on $E\times M(E)$ such that $(x,\mu)\mapsto \int_0^\infty u\vee u^2 \; \widetilde{n}(x,\mu, du)$ is bounded.
\end{enumerate}
Further, let $\widetilde{A}(\mu)_{\mu\in M(E)}$ be a family of linear operators defined on a common class of test functions $\widetilde{D}\subset \{f:E\rightarrow \mathbb{R} : f \mbox{ is } \mathcal{B}(E)\mbox{-measurable and bounded}\}$,
\begin{equation*}
    \mathcal{D}\ni f \mapsto \widetilde{A}(\mu)f \in b\mathcal{B}(E).
\end{equation*}
Also, we assume that $\widetilde{D}$ separates the measures from $M(E)$.
\begin{align*}
& \widetilde{D}_{00}(\widetilde{L}):=\left\{M(E)\ni\mu \mapsto \psi(\mu(f_1),\cdots,\mu(f_n)\in \mathbb{R} \;:\; n\geq 1, (f_i)_{1\leq i\leq n}\subset \widetilde{D}, \psi\in C_0^\infty(\mathbb{R}^n) \right\}\\
&\widetilde{L}F(\mu)=\int_E c(x,\mu)F''(\mu; x) \mu(dx)+\int_E [\widetilde{A}(\mu)F'(\mu;\cdot)(x)-b(x,\mu)F'(\mu; x)] \mu(dx)\\
&\quad \quad \quad \quad  +\int_E\int_0^{\infty}[F(\mu+u\delta_x)-F(\mu)-uF'(\mu;x)]n(x,\mu, du) \mu(dx)
\end{align*}
for $F\in \widetilde{D}_{00}(\widetilde{L}), \mu \in M(E)$.

\begin{coro}\label{coro:ibranch}
Assume that there exists a right Markov process $X:=(X_t)_{t\geq 0}$ on $M(E)$ with a.s. c\`adl\`ag paths with respect to $\tau$, such that
\begin{enumerate}[(i)]
    \item $X$ solves the martingale problem associated to $(\widetilde{L}, \widetilde{D}_{00}(\widetilde{L}))$ in the sense of \Cref{defi:martingalepb}.
    \item There exists $\alpha_0>0$ such that 
    \begin{equation*}
        \mathbb{E}^\mu\{X_t(1)\}\leq e^{\alpha_0t}\mu(1), \quad \mu\in M(E), t\geq 0.
    \end{equation*}
\end{enumerate}
Then, if $G$ denotes the interior (w.r.t. $\tau$) of the set $\{\mu \in M(E) : \int_E n(\cdot, \mu, \cdot) =0\}$, we have that $X$ is a diffusion in $G$.

\medskip
\noindent{Proof in \Cref{pf:3}.}
\end{coro}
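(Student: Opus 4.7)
The plan is to apply the pointwise diffusion criterion \Cref{coro 2.6}, (i), to $X$ on $G$, by verifying condition ${\rm Loc}(G)$ from \Cref{defi:pLocG} with a family of cylinder test functions whose membership in the pointwise domain $D_{\alpha_0}(L)$ is ensured by hypothesis (ii). The overall strategy mirrors the proof of \Cref{cor5.5} (non-interacting case), but the $\mu$-dependence of the coefficients forces one to be more careful in bringing $\widetilde{D}_{00}(\widetilde{L})$ into $D_{\alpha_0}(L)$.

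First I would construct the test family: pick a countable $(g_i)_{i\geq 1}\subset \widetilde{D}$ that still separates $M(E)$ (possible because $\widetilde{D}$ separates $M(E)$ and $E$ is Lusin), and let $(f_n)_n$ enumerate the cylinder functions $\mu \mapsto \psi(\mu(g_{i_1}),\ldots,\mu(g_{i_k}))$ with $\psi$ running over a countable dense subset of $C_c^\infty(\mathbb{R}^k)$, $k\geq 1$. These separate the points of $\overline{G}$ as required in ${\rm Loc}(G)$, (i). Choose $\varphi_k \in C^\infty(\mathbb{R},\mathbb{R}_+)$ with $\varphi_k\equiv 0$ on $(-\infty, 1/k]$ and $\varphi_k \nearrow x^+$ pointwise; then each $\varphi_k(f_n-\varepsilon)$ is again of the above cylinder form, hence lies in $\widetilde{D}_{00}(\widetilde{L})$.

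Next I would show that $\widetilde{D}_{00}(\widetilde{L})\subset D_{\alpha_0}(L)$ with $LF = \widetilde{L}F$. The argument runs parallel to \Cref{lem:bbound}: boundedness of the coefficients of $\widetilde{\varphi}$ and of $\widetilde{A}(\mu)g_i$ together with the compact support of the outer $\psi$ yield $|\widetilde{L}F(\mu)| \leq c_F\,\mu(1)$; the martingale property (i) combined with optional sampling upgrades this to $|(Q_tF - F)(\mu)/t| \leq c_F\,\mu(1)$. Since hypothesis (ii) gives $\overline{U}_\alpha(\mu\mapsto\mu(1))(\mu)\leq \mu(1)/(\alpha-\alpha_0)<\infty$ for $\alpha > \alpha_0$, the dominating function $V_F(\mu):=c_F\mu(1)$ satisfies the requirement in (\ref{eq:dgen}), placing $F$ in $D_{\alpha_0}(L)$. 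For the locality step, fix $u = \varphi_k(f_n-\varepsilon)$ and $\mu$ in the $\tau$-interior of $[u=0]\cap G$. Continuity of $f_n$ in $\tau$ provides a $\tau$-neighborhood of $\mu$ on which $f_n - \varepsilon \leq 0$, and the flatness of $\varphi_k$ on $(-\infty,1/k]$ forces $\varphi_k^{(j)}(f_n(\mu)-\varepsilon)=0$ for $j=1,2$. The chain rule applied to (\ref{eq:diff}) then yields $F'(\mu;x)=F''(\mu;x)=0$ for every $x\in E$, annihilating the $\widetilde{c}$-, $\widetilde{b}$- and $\widetilde{A}(\mu)$-terms of $\widetilde{L}u(\mu)$; the jump term vanishes on $G$ by the very definition of $G$. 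Hence $Lu(\mu)=\widetilde{L}u(\mu)=0$, confirming ${\rm Loc}(G)$, (iii), and \Cref{coro 2.6}, (i) yields that $X$ is a diffusion in $G$.

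The main obstacle I expect is the domain step above: because the coefficients of $\widetilde{L}$ genuinely depend on $\mu$ and $M(E)$ is not locally compact, one must produce a dominating function $V_F$ whose $\overline{U}_\alpha$-potential is finite and that controls the difference quotients of $Q_tF$ uniformly in $t$. Hypothesis (ii) is tailored exactly for this, but a clean passage from $|\widetilde{L}F|\leq c_F\mu(1)$ to a pointwise bound on the time-increments of $Q_tF$ requires a careful Gronwall/optional-sampling argument, and one must also verify that the assumed boundedness of $\widetilde{A}(\mu)g$ is uniform enough in $\mu$ to avoid spurious growth beyond the linear-in-$\mu(1)$ scaling; this is where the bulk of the technical work will lie.
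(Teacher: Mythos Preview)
Your proposal is correct and follows essentially the same route as the paper: verify $\widetilde{D}_{00}(\widetilde{L})\subset D_\alpha(L)$ via the linear bound $|\widetilde{L}F(\mu)|\leq c_F\,\mu(1)$ together with hypothesis (ii), then check ${\rm Loc}(G)$ with cylinder test functions and invoke \Cref{coro 2.6}. Your write-up is in fact more explicit than the paper's (which just says ``similarly to the proof of \Cref{lem:bbound}''), and you correctly isolate the one genuine step beyond the non-interacting case, namely turning the bound on $\widetilde{L}F$ into a uniform bound on $(Q_tF-F)/t$; note that this follows directly from the martingale identity $Q_tF(\mu)-F(\mu)=\int_0^t Q_s(\widetilde{L}F)(\mu)\,ds$ combined with (ii), without any optional-sampling or Gronwall argument.
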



\subsection{Diffusions associated with generalized Dirichlet forms}
Following \cite{St99b}, let $(E,\mathcal{B}, m)$ be as in Section 2, and consider $(\mathcal{A}, \mathcal{V})$ a real valued coercive closed form on $\mathcal{H}:=L^{2}(E,m)$, i.e. $\mathcal{V})$ is dense in $\mathcal{H}$ and $\mathcal{A}:\mathcal{V}\times \mathcal{V}\rightarrow \mathbb{R}$ is a positive definite bilinear form s.t.
\begin{enumerate}[-]
\item $\mathcal{V}$ is a Hilbert space with inner product $\tilde{\mathcal{A}}_1(u, v):=\dfrac{1}{2}(\mathcal{A}(u,v)+\mathcal{A}(v,u))+\langle u, v\rangle_{\mathcal{H}}$, $u,v\in \mathcal{V}$.
\item the {\it weak sector condition} $|\mathcal{A}(u, v)|\leq K \tilde{\mathcal{A}}_1(u, u)^{\frac{1}{2}}\tilde{\mathcal{A}}_1(v, v)^{\frac{1}{2}}$ holds for some constant $K>0$ and all $u,v \in \mathcal{V}$.
\end{enumerate} 
In particular, $\mathcal{A}$ corresponds uniquely to a strongly continuous resolvent $\mathcal{U}:=(U_\alpha)_{\alpha>0}$ of contractions on $\mathcal{H}$, with generator $(L,D(L))$ given by
\begin{align*}
& D(L):=\{U_\alpha f : f\in \mathcal{H}\}\\
& L(U_\alpha f):=\alpha U_\alpha f-f \mbox{ for al } f\in \mathcal{H}.
\end{align*}
We are interested in non-sectorial perturbations of $L$: let $(\Lambda, D(\Lambda, \mathcal{H}))$ be a linear operator on $\mathcal{H}$ s.t.:
\begin{enumerate}[-]
\item $(\Lambda, D(\Lambda, \mathcal{H}))$ is the generator of a strongly continuous semigroup of contractions on $\mathcal{H}$, which can also be restricted to a strongly continuous semigroup on $\mathcal{V}$, with generator $(\Lambda, D(\Lambda, \mathcal{V}))$.
\end{enumerate}
Throughout we consider the embedding $\mathcal{V} \hookrightarrow \mathcal{H}\equiv \mathcal{H}'\hookrightarrow \mathcal{V}'$.
Then by \cite{St99b}, Lemma 2.3, $\Lambda: D(\Lambda, \mathcal{H}) \cap \mathcal{V} \rightarrow \mathcal{V}'$ is closable on $\mathcal{V}$, and the closure is denoted by $(\Lambda, \mathcal{F})$.
In particular, $\mathcal{F}$ is a Hilbert space w.r.t. the graph norm $|\cdot|_{{\mathcal{F}}}^{2}:=|\cdot|_{\mathcal{V}} + |{\Lambda} (\cdot)|_{\mathcal{V}'}$.
Also, the semigroup generated by the adjoint $(\hat{\Lambda},  D(\hat{\Lambda}, \mathcal{H}))$ of $(\Lambda, D(\Lambda, \mathcal{H}))$ on $\mathcal{H}$ can be extended to a strongly continuous semigroup on $\mathcal{V}'$.
The corresponding generator on $\mathcal{V}'$ denoted by $(\hat{\Lambda}, D(\hat{\Lambda}, \mathcal{V}'))$ is the dual of
$(\Lambda, D(\Lambda, \mathcal{V}))$.
In particular, $\hat{\mathcal{F}}:=D(\hat{\Lambda}, \mathcal{V}')\cap \mathcal{V}$ is a Hilbert space w.r.t. the graph norm $|\cdot|_{\hat{\mathcal{F}}}^{2}:=|\cdot|_{\mathcal{V}} + |\hat{\Lambda}( \cdot)|_{\mathcal{V}'}$.
By \cite{St99b}, Lemma 2.7 we have
$$
\langle \Lambda u, v \rangle = \langle u, \hat{\Lambda}v \rangle \quad \mbox{ for all } u\in \mathcal{F}, v\in \hat{\mathcal{F}},
$$
so the following {\it bilinear form associated with} $(\mathcal{A}, \mathcal{V})$ and $(\Lambda, D(\Lambda, \mathcal{H}))$ is well defined:
\begin{equation*}
 \mathcal{E}(u,v):=
 \begin{cases}
     \mathcal{A}(u, v)-\langle \Lambda u, v \rangle \quad \mbox {if } u\in \mathcal{F}, v\in \mathcal{V} \\
      \mathcal{A}(u, v)-\langle \hat{\Lambda} v, u \rangle \quad \mbox {if } u\in \mathcal{V}, v\in \hat{\mathcal{F}}
  \end{cases},
\end{equation*}
and $\mathcal{E}_\alpha (u,v):=\mathcal{E} (u,v) + \alpha \langle u , v \rangle_\mathcal{H}$ for all $\alpha>0$.
By \cite{St99b}, Proposition 3.4 there exist two resolvent families of continuous linear bijections $W_\alpha : \mathcal{V}'\rightarrow \mathcal{F}$ and $\hat{W}_\alpha: \mathcal{V}'\rightarrow \hat{\mathcal{F}}$ s.t.
\begin{equation} \label{eq 3.1}
\mathcal{E}_\alpha(W_\alpha f, v)=\langle f, v\rangle=\mathcal{E}_\alpha(v, \hat{W}_\alpha f) \mbox{ for all } f\in \mathcal{V}', v\in \mathcal{V}.
\end{equation}
Moreover, by \cite{St99b}, Proposition 3.6 the restrictions of $W_\alpha$ (resp. $\hat{W}_\alpha$) to $\mathcal{H}$, denoted by $G_\alpha$ (resp. $\hat{G}_\alpha$) are strongly continuous resolvents of contractions on $\mathcal{H}$, and $\hat{G}_\alpha$ is the adjoint of $G_\alpha$.
We denote by $(L^{\Lambda}, D(L^{\Lambda}))$ the infinitezimal generator associated to $(G_\alpha)_{\alpha >0}$ on $\mathcal{H}$.

\begin{lem} \label{lem 3.1}
It holds that $D(L_0)\cap D(\Lambda, \mathcal{H}) \subset D(L^{\Lambda})$ and 
$$
L^{\Lambda}u=L_0u+\Lambda u \mbox{ for all } u\in D(L_0)\cap D(\Lambda, \mathcal{H}).
$$

\medskip
\noindent{Proof in \Cref{pf:3}.}
\end{lem}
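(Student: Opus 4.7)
The plan is to identify any $u \in D(L_0) \cap D(\Lambda,\mathcal{H})$ with $G_\alpha f$ for the explicit candidate
\[
f := \alpha u - L_0 u - \Lambda u,
\]
where $\alpha>0$ is arbitrary. Once this identification is established, the very definition of $L^{\Lambda}$ gives
$L^{\Lambda}u = L^{\Lambda}G_\alpha f = \alpha u - f = L_0 u + \Lambda u$, as desired.

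First I would observe that the candidate $f$ genuinely lives in $\mathcal{H}$: $u \in \mathcal{H}$, $L_0 u \in \mathcal{H}$ by the definition of $D(L_0)$, and $\Lambda u \in \mathcal{H}$ by the hypothesis $u \in D(\Lambda,\mathcal{H})$. Next, since $D(L_0) = U_\alpha(\mathcal{H}) \subset \mathcal{V}$, we have $u \in \mathcal{V} \cap D(\Lambda,\mathcal{H})$, which by construction is contained in $\mathcal{F}$; hence the form $\mathcal{E}_\alpha(u,v)$ is well-defined for every $v \in \mathcal{V}$. The key step is then to verify the form identity
\[
\mathcal{E}_\alpha(u,v) = \langle f, v \rangle_{\mathcal{H}} \qquad \text{for every } v \in \mathcal{V},
\]
after which \eqref{eq 3.1} produces $u = W_\alpha f$, and because $f \in \mathcal{H}$ this is exactly $u = G_\alpha f$.

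To verify the identity, I expand
\[
\mathcal{E}_\alpha(u,v) \;=\; \mathcal{A}(u,v) - \langle \Lambda u, v \rangle + \alpha \langle u, v \rangle_{\mathcal{H}},
\]
and use the correspondence between $\mathcal{A}$ and $L_0$: writing $u = U_\alpha g$ with $g = \alpha u - L_0 u$, the defining relation $\mathcal{A}_\alpha(U_\alpha g, v) = \langle g, v \rangle_{\mathcal{H}}$ yields
\[
\mathcal{A}(u,v) \;=\; \langle g, v \rangle_{\mathcal{H}} - \alpha \langle u, v \rangle_{\mathcal{H}} \;=\; \langle -L_0 u, v \rangle_{\mathcal{H}}.
\]
Substituting, $\mathcal{E}_\alpha(u,v) = \langle -L_0 u - \Lambda u + \alpha u, v \rangle_{\mathcal{H}} = \langle f, v \rangle_{\mathcal{H}}$.

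The main point to be careful about, and the place where the hypothesis $u \in D(\Lambda,\mathcal{H})$ is crucial, is the interpretation of the pairing $\langle \Lambda u, v\rangle$ in the definition of $\mathcal{E}$: a priori this is the $\mathcal{V}'$-$\mathcal{V}$ duality (since $\Lambda u \in \mathcal{V}'$ when $u \in \mathcal{F}$), but assuming $\Lambda u \in \mathcal{H}$ collapses it to the $\mathcal{H}$-inner product, which is precisely what is needed to rewrite everything in $\mathcal{H}$ and to conclude that $f$ belongs to $\mathcal{H}$ rather than merely to $\mathcal{V}'$. This rigged-space bookkeeping is the only subtlety; once it is settled, the computation above is essentially algebraic.
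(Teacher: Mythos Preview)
Your proof is correct and follows essentially the same route as the paper: both arguments identify $u$ as $G_\alpha$ of an explicit element of $\mathcal{H}$ by verifying the form identity in \eqref{eq 3.1} and invoking the uniqueness of $W_\alpha$. The paper phrases this via the resolvent-type relation $G_\alpha g = U_\alpha g + G_\alpha \Lambda U_\alpha g$ (writing $u=U_\alpha g$), which upon rearrangement is precisely your statement $u = G_\alpha(\alpha u - L_0 u - \Lambda u)$; the underlying computation with $\mathcal{A}_\alpha$ and the $\mathcal{V}'$--$\mathcal{V}$ pairing is the same.
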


Further, it is assumed that $E$ is a Lusin topological space whose Borel $\sigma$-algebra is precisely $\mathcal{B}$.
Also $(\mathcal{E}, \mathcal{V})$ is supposed to be a {\it generalized Dirichlet form}, i.e. the resolvent $(G_\alpha)_{\alpha >0}$ is sub-Markovian, and also that it has associated a c\`adl\`ag right Markov process $X=(\Omega, \mathcal{F}, \mathcal{F}_t, X_t, \theta_t, \mathbb{P}^x)$ on $E$ with lifetime $\zeta$, in the sense that the resolvent of $X$ can be extended on $L^{2}(E, m)$ and it coincides with $(G_\alpha)_{\alpha >0}$ there.

Now, as a coroboration of Lemma \ref{lem 3.1} and Theorem \ref{thm 2.2}, we can conclude this subsection with the following:
\begin{coro} \label{coro:GDF}
Let $G\subset E$ be open and assume that $\rm Loc_m(G)$ (see \Cref{defi:locG}) holds with $D({\sf L}_p^m)$ from (ii) replaced by $D(L_0)\cap D(\Lambda, \mathcal{H})$, and ${\rm L}_p^m$ from (iii) replaced by $L_0+\Lambda$.
Then $X$ is a diffusion in $G$ $m$-q.e.
\end{coro}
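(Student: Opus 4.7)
The plan is to verify that the hypotheses of \Cref{thm 2.2}(i) are satisfied with $p=2$, using \Cref{lem 3.1} as the bridge between the assumption phrased in terms of the pair $(L_0,\Lambda)$ and the abstract locality condition on the generator of the resolvent of $X$. By hypothesis the resolvent of $X$, when extended to $\mathcal{H}=L^{2}(E,m)$, coincides with $(G_\alpha)_{\alpha>0}$, whose infinitesimal generator on $\mathcal{H}$ is $(L^{\Lambda},D(L^{\Lambda}))$. Therefore, in the setting of \Cref{defi:locG} with $p=2$, the generator ${\sf L}_p^m$ is nothing but $L^{\Lambda}$.

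Next, I translate the stated assumptions into the language required by ${\rm Loc}_m(G)$ for ${\sf L}_p^m = L^{\Lambda}$. The hypothesis provides a separating family of $m$-quasi-continuous functions $(f_n)_n$ with a common $m$-nest $(F_n)_n$, cut-off functions $(\varphi_k)_k$, and the test class $\mathcal{C}=\{\varphi_k(f_n-\varepsilon):n,k\geq 1,\varepsilon\in\mathbb{R}_+\}\subset D(L_0)\cap D(\Lambda,\mathcal{H})$ such that $(L_0+\Lambda)u=0$ $m$-a.e.\ on $\mathop{\wideparen{[u=0]}}\limits^\circ\cap G\cap F_n$ for every $u\in\mathcal{C}$ and every $n$. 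By \Cref{lem 3.1} we have $D(L_0)\cap D(\Lambda,\mathcal{H})\subset D(L^{\Lambda})$ together with $L^{\Lambda}u=L_0u+\Lambda u$ for all $u$ in this intersection. Consequently $\mathcal{C}\subset D(L^{\Lambda})$ and $L^{\Lambda}u=0$ $m$-a.e.\ on $\mathop{\wideparen{[u=0]}}\limits^\circ\cap G\cap F_n$ for each $u\in\mathcal{C}$ and each $n$. Conditions (i), (ii), (iii) of \Cref{defi:locG} are thus verified for the generator $L^{\Lambda}$, i.e.\ ${\rm Loc}_m(G)$ holds in the sense required by \Cref{thm 2.2}.

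Applying \Cref{thm 2.2}(i) then yields that $X$ is a diffusion in $G$ $m$-q.e., which is the desired conclusion.

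The only point requiring care is that \Cref{lem 3.1} gives the inclusion $D(L_0)\cap D(\Lambda,\mathcal{H})\subset D(L^{\Lambda})$ in only one direction, rather than identifying the two domains; however, this is precisely the direction that is needed here, since what matters for \Cref{thm 2.2} is that the elements of $\mathcal{C}$ belong to the domain of the $L^2$-generator of the resolvent of $X$ and that the generator acts on them by $L_0+\Lambda$. No further input is required; in particular, no regularity of the coefficients of $\mathcal{A}$ beyond what is built into the generalized Dirichlet form framework plays a role in the argument.
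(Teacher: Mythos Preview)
Your proof is correct and follows exactly the approach indicated in the paper, which simply states that the corollary is obtained ``as a coroboration of Lemma~\ref{lem 3.1} and Theorem~\ref{thm 2.2}'' without giving further details. You have spelled out precisely this combination: Lemma~\ref{lem 3.1} upgrades the hypothesis on $D(L_0)\cap D(\Lambda,\mathcal{H})$ and $L_0+\Lambda$ to the corresponding statement for $D(L^{\Lambda})$ and $L^{\Lambda}$, which is the $L^2$-generator of the resolvent of $X$, and then Theorem~\ref{thm 2.2}(i) applies directly with $p=2$.
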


\subsection{Diffusions associated with $L^p$-semigroups}

Recall that in the case when $(P_t)_{t\geq 0}$ is the transition function of a right process with state space  $E$
and   $m$ is a {\it subinvariant  measure} w.r.t.   $(P_t)_{t\geq 0}$, i.e., $\int_E P_t f d m \leq \int_E f d m$ for all $f\in p\cb$ and $t\geq 0$,
then   $(P_t)_{t\geq 0}$
induces such  a strongly continuous semigroup of sub-Markovian  contractions on  $L^p(E, m)$.
A converse of this statement was obtained in \cite{BeBoRo06}, 
in particular, it was given an answer to the following  question formulated by G. Mokobodzki
in 1991 and addressed in \cite{Dong98}: 
given a semi-Dirichlet form 
$(\mathcal{E}, D(\mathcal{E}))$    on $L^2(E,m)$, 
can  we find a Lusin topology on $E$ such that  $\mathcal{B}$ is its Borel  $\sigma$-algebra 
and $(\mathcal{E}, D(\mathcal{E}))$  is quasi-regular with respect to this topology, thus ensuring the association of a c\` adl\`ag right (in fact standard) process?

Now we readdress the problem raised by G. Mokobodzki by updating the required path regularity of the process as follows: Given a strongly continuous semigroup of sub-Markovian contractions $(P_t)_{t\geq 0}$ on $L^p(E,m)$, where $(E,\mathcal{B},m)$ is a $\sigma$-finite measure space, under which conditions is it possible to find a topology on $E$ and a right Markov process with continuous paths (i.e. a diffusion) which corresponds to $(P_t)_{t\geq 0}$?
It is important to mention here that we do not assume that the semigroup is quasi-regular in any sense.

To answer this question, let us fix $p \in [1, \infty)$ and $(P_t)_{t\geq 0}$ a strongly continuous semigroup of sub-Markovian  contractions on $L^p(E,m)$, 
where $(E,\cb)$ is a Lusin measurable space and $m$  is a $\sigma$-finite measure on $(E,\cb)$.
Let  $(V_\alpha )_{\alpha >0}$ be the sub-Markovian resolvent of contractions on $L^p(E,m)$ induced by $(P_t)_{t\geq 0}$ on $L^p(E, m)$, 
$ V_\alpha  =\int_0^\infty e^{-\alpha  t} P_t d t ,\,\,  \alpha >0$. 
An element $u\in L^p_+(E, m)$
is called $\beta$-{\it potential} provided that $\alpha
V_{\beta+\alpha} u\leq u$ for all $\alpha>0$.
Let  $\cp_\beta$ be the set of all $\beta$-potentials. 
It is known that  if $u,
u'\in \cp_\beta$, $u\leq u'$, then there exists $R_\beta (u-u')\in
\cp_\beta$, i.e. the {\it r\'eduite} of $u-u'$, defined by
$R_\beta (u-u')=\bigwedge \{ v\in \cp_\beta : $ $v\geq u-u'\}$; here
$\bigwedge$ denotes the infimum in $\cp_\beta$. 
An element $u\in
\cp_\beta$ is called {\it regular} if for every sequence
$(u_n)_n\subset \cp_\beta$ with $u_n\nearrow u$ we have
$R_\beta(u-u_n)\searrow 0$. 
Recall that by \cite{BeBoRo06}, Lemma 3.1, that $u\in \cp_\beta$ is regular if and only if $R_\beta(u-nV_nu)\mathop{\longrightarrow}\limits_n 0$; also, if $\cv_\beta=(V_{\beta+\alpha})_{\alpha >0}$
is the resolvent of a right process and 
$u$ is a $\cv_\beta$-excessive function,   $u < \infty$, and $u\in  L^p(E,m)$, 
then $u$ is regular if and only if there exists
a continuous additive functional whose potential equals $u$ $m$-a.e.

Consider the following  $L^p$-version of condition ${\bf (D_m)}$ from Appendix, for  $\beta>0$:

\medskip
\noindent
${\bf (D_m^p)}$ There exists $f_o\in L^p(E, m)$ strictly positive such that every $\beta$-potential dominated by $V_\beta f_o$ is regular.

\begin{coro} \label{coro:nonqr}
Let $p \in [1, \infty)$ and $(P_t)_{t\geq 0}$ be a  strongly continuous semigroup of sub-Markovian bounded operators on $L^p(E,m)$, 
where $(E,\cb)$  is a Lusin measurable space and $m$  is a $\sigma$-finite measure on $(E,\cb)$.
Let $({\sf L}_p^m, D({\sf L}_p^m))$ be the $L^p$-generator of 
$(P_t)_{t\geq 0}$.
Suppose that condition ${\bf (D_m^p)}$ as well as the following version of condition ${\rm Loc_m(G)},G=E$ are satisfied: 

\medskip
\noindent
$\rm\mathbf{\widetilde{Loc}_m}.$ There exist a sequence of  bounded $\mathcal{B}$-measurable functions $(g_n)_{n\geq 1}$ from $L^p(E, m)$ and $(\varphi_n)_{n\geq 1} \subset C(\mathbb{R}, \mathbb{R}_+)$ such that: 
\begin{enumerate}[(i)]
\item $(g_n)_{n\geq 1}$ separates the set of all positive finite measures on $E$. 
\item $0\leq \varphi_k(x) \mathop{\nearrow}\limits_{k}x1_{[x>0]}=:x^{+}$ for all $x\in \mathbb{R}$ and
if for some $\beta >0$ we define $f_n:= V_\beta g_n, n\geq1$, then $0\leq f_n\in L^\infty(m)$ and
\begin{equation} \label{5.1}
\mathcal{C}:=\{\varphi_k(\pm f_n+\varepsilon) : n,k\geq 1,\varepsilon \in \mathbb{R}\}\subset D({\sf L}_p^m)\cup V_\alpha(L^\infty),
\end{equation}
for some (hence all) $\alpha>0$, where
$(V_\alpha)_\alpha$ is the resolvent of $(P_t)_{t\geq 0}$, which note that on $L^\infty$ is well defined for all $\alpha>0$.
\item $Lu=0$ $m$-a.e. on $[u=0]$ for all $u\in \mathcal{C}$.
\end{enumerate}

Then there exist a Lusin topological space $E_o$ with $E \subset  E_o$, $E \in \cb_o$
(the $\sigma$-algebra of all Borel subsets of $E_o$), 
$\cb = \cb_o |_E$, and a (Borel)  right process $X$ with state space $E_o$, 
which is a diffusion $m_o$-q.e.,
such that the  transition function of $X$ regarded as a family of operators on $L^p(E_o, m_o)$,  
coincides with $(P_t)_{t\geq 0}$,  where $m_o$ is the measure on 
$(E_o, \cb_o)$ extending $m$ with zero on $E_o\setminus E$.

\medskip
\noindent{Proof in \Cref{pf:3}.}
\end{coro}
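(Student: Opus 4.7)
The plan is to combine the $L^p$-realization result of \cite{BeBoRo06} with \Cref{thm 2.2}. The first step is to invoke the construction of \cite{BeBoRo06}, which uses precisely the hypothesis $\mathbf{(D_m^p)}$ to produce the announced Lusin topological space $E_o \supset E$ with $E \in \mathcal{B}_o$ and $\mathcal{B} = \mathcal{B}_o|_E$, together with a Borel right process $X$ on $E_o$ whose transition function, regarded on $L^p(E_o, m_o)$, coincides with $(P_t)_{t\geq 0}$. Since $m_o \equiv 0$ on $E_o \setminus E$, the canonical isomorphism $L^p(E_o, m_o) \cong L^p(E, m)$ identifies the $L^p$-generator of $X$ with the given $({\sf L}_p^m, D({\sf L}_p^m))$ and the associated resolvent with $(V_\alpha)_{\alpha>0}$. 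What remains is to check that $X$ is a diffusion on $E_o$ $m_o$-q.e.

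The second step is to verify that condition ${\rm Loc}_{m_o}(E_o)$ (in the sense of \Cref{defi:locG}) holds for $X$ on $E_o$, using $\widetilde{\rm Loc}_m$ as input. Each $f_n := V_\beta g_n$ is a bounded $\beta$-potential, in particular $\mathcal{U}$-excessive, hence admits a common $m_o$-nest of closed sets $(F_k)_{k\geq 1}$ along which every $f_n$ is continuous; this quasi-continuity is a standard byproduct of the right-process construction in \cite{BeBoRo06}. For the separation property, fix $x,y \in E_o \cap \bigcup_k F_k$ with $f_n(x)=f_n(y)$ for every $n$; then the potential measures $\varepsilon_x \circ V_\beta$ and $\varepsilon_y \circ V_\beta$ agree on each $g_n$, hence coincide as positive finite measures on $E$ by hypothesis (i) of $\widetilde{\rm Loc}_m$, whence $x=y$ by injectivity of $V_\beta$ on finite measures. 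Condition (iii) of ${\rm Loc}_{m_o}(E_o)$ follows at once from (iii) of $\widetilde{\rm Loc}_m$, because $\wideparen{[u=0]}^\circ \subseteq [u=0]$.

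The third step handles the possibility that some $u \in \mathcal{C}$ lies in $V_\alpha(L^\infty) \setminus D({\sf L}_p^m)$, which typically happens when $m(E) = \infty$. Here \Cref{lem 2.11} is applied with a strictly positive $\rho \in L^1(m) \cap L^\infty(m)$, yielding the equivalent finite measure $m_\alpha^\rho := (\rho \cdot m) \circ U_\alpha$. On $L^1(m_\alpha^\rho)$ every bounded measurable function is integrable, so $V_\alpha(L^\infty) \subset D({\sf L}_1^{m_\alpha^\rho})$, and by \Cref{lem 2.11} also $D({\sf L}_p^m) \subset D({\sf L}_1^{m_\alpha^\rho})$ with the two generators coinciding where both are defined. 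Consequently $\mathcal{C} \subset D({\sf L}_1^{m_\alpha^\rho})$, the identity ${\sf L}u=0$ on $[u=0]$ persists $m_\alpha^\rho$-a.e., and ${\rm Loc}_{m_\alpha^\rho}(E_o)$ is fulfilled. Applying \Cref{thm 2.2}(i) with $G = E_o$ relative to $m_\alpha^\rho$ gives that $X$ is a diffusion $m_\alpha^\rho$-q.e.; since $m_\alpha^\rho$ and $m_o$ have the same null sets on $E$ and $E_o \setminus E$ is $m_o$-negligible, this is the desired conclusion.

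The main obstacle is bridging the gap between the measure-separation hypothesis of $\widetilde{\rm Loc}_m$ and the point-separation required by \Cref{defi:locG}, in tandem with ensuring the $m_o$-quasi-continuity of the potentials $f_n$ on the abstractly constructed space $E_o$. Both items rely on a careful inspection of the right-process construction of \cite{BeBoRo06}, in particular on the identification of points of $E_o$ with the extremal finite potential measures and on the availability of a common nest compatible with all $f_n$ simultaneously; everything else in the argument is a rather direct application of the general theory developed earlier in the paper.
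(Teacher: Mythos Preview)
Your overall strategy matches the paper's: invoke the construction of \cite{BeBoRo06} under $\mathbf{(D_m^p)}$ to obtain the right process on the enlarged space, then verify the hypotheses of \Cref{thm 2.2} via $\widetilde{\rm Loc}_m$. The change-of-measure step through \Cref{lem 2.11} is also exactly what the paper does (the paper applies it at the outset rather than at the end, but this is immaterial).

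There is, however, a genuine gap in your verification of the separation condition. \Cref{defi:locG}(i) is \emph{asymmetric}: for each pair $x\neq y$ one needs some $n$ with $f_n(x)<f_n(y)$. Your argument only yields $f_n(x)\neq f_n(y)$ for some $n$, which may well give $f_n(x)>f_n(y)$, and then the proof of \Cref{thm 2.2} (Step~I, where one needs $f_n(X_{t-})<\varepsilon<f_n(X_t)$) breaks down. The paper repairs this by augmenting the family to $\{\widetilde{f_n},\,|\widetilde{f_n}|_\infty-\widetilde{f_n}\}_{n\geq 1}$: if $\widetilde{f_n}(x)>\widetilde{f_n}(y)$ then $(|\widetilde{f_n}|_\infty-\widetilde{f_n})(x)<(|\widetilde{f_n}|_\infty-\widetilde{f_n})(y)$. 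This is precisely why $\widetilde{\rm Loc}_m$ requires $\varphi_k(\pm f_n+\varepsilon)$ in $\mathcal{C}$, with both signs, and you have not used the minus sign at all.

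A secondary point: rather than appealing to abstract quasi-continuity of excessive functions, the paper builds the Ray cone $\mathcal{R}$ so that $k\wedge W_\beta g_n^{\pm}\in\mathcal{R}$ for all $k,n$, which makes the extensions $\widetilde{f_n}$ genuinely continuous in the Ray topology on the saturation $F_1$. This avoids the need for an $m$-nest altogether and makes the identification $\widetilde{f_n}(\zeta)=L_\beta(\zeta,W_\beta g_n)=\zeta(g_n)$ transparent, so that the measure-separation hypothesis on $(g_n)_n$ translates directly into point-separation on $F_1$. Your sketch gestures at this (``identification of points of $E_o$ with the extremal finite potential measures''), but without building the Ray cone to contain the $f_n$'s you have no control over how $f_n$ extends to $E_o\setminus E$, and the separation argument does not go through as written.
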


\begin{rem}
Note that condition \eqref{5.1} is slightly different than condition ii) from \Cref{defi:locG}, in the sense that $\mathcal{C}$ is richer in \eqref{5.1}, in particular it could also contain elements that are lower bounded by a strictly positive constant, thus not belonging to $L^p(m)$ if $m(E)=\infty$. 
This is why we are lead to relax the inclusion $\mathcal{C}\subset D({\sf L}_p^m)$ to $\mathcal{C}\subset D({\sf L}_p^m)\cup V_\alpha(L^\infty)$.
\end{rem}

\section{Proofs}
\label{s:proofs}

In this section we collect the proofs of the results presented above, grouping them according to the corresponding sections.

\subsection{Proofs for \Cref{s:preliminaries}}
\label{pf:1}

\begin{proof}[\textit{\textbf{Proof of \Cref{prop2.1}}}]
By dominated convergence and the right-continuity of $X$ it follows that $\lim\limits_{\alpha \to \infty}\alpha {U}_\alpha f=f$ point-wise on $E$ for all $f \in \mathcal{C}$.
Hence we can apply \cite[Proposition 2.1]{BeRo11a} and \Cref{thm 4.15} from Appendix to construct a right Markov process $X'=(\Omega', \mathcal{F}', \mathcal{F}'_t, X'_t, \theta'_t, \mathbb{P'}^x)$ on a larger Lusin topological space $(E', \tau')$ such that $E\subset E'$ is $\mathcal{B}(E')$-measurable and $\tau'|_E \subset \tau$, whose resolvent denoted by $\mathcal{U}'$ is an extension of $\mathcal{U}$ from $E$ to $E'$, that is $(U_\alpha'f)|_E=U_\alpha(f|_E)$ for all $\alpha>0$ and $f\in b\mathcal{B}(E')$.
Let $D$ be the countable set of dyadics in $[0,\infty)$, and denote by $D_E$ the space of the restrictions to $D$ of all c\`adl\`ag paths from $[0,\infty)$ to $E$ (resp. $E'$).
Also, consider the product ${E'}^D$ endowed with the canonical $\sigma$-algebra.

Now, by \cite{DeMe78}, Chapter IV, pages 91-92 we obtain that $D_E$ is a measurable subset of ${E'}^D$, so that we can regard the law $\mathbb{P}^x\circ (X^x)^{-1}$ on $D_E$ as a probability on ${E'}^D$ supported on $D_E$.
But now, because any probability on ${E'}^D$ is determined by its finite dimensional marginals, we deduce that as laws on ${E'}^D$,
\begin{equation*}
    \mathbb{P}^x\circ (X^x)^{-1}=\mathbb{P'}^x\circ (X')^{-1} \quad \mbox{ for all } x\in E.
\end{equation*}
Therefore, $\mathbb{P'}^x\circ (X')^{-1}$ is also supported on $D_E$ for all $x\in E$, so under $\mathbb{P'}^x, x\in E$, the paths of $X'$ are restrictions to $D$ of c\`adl\`ag paths in $E$ w.r.t $\tau$. 
Since $\tau'|_E\subset \tau$, we deduce that the entire paths of $X'$ lie in $E$ and are c\`adl\`ag with respect to $\tau$. 
This means that the right process $X'$ can be restricted to $E$ and has c\`adl\`ag paths there.
\end{proof}

\begin{proof}[\textit{\textbf{Proof of \Cref{prop 2.4}}}]

(i). The proof follows by similar arguments as in \cite{BeRo11a}, Proposition 5.1, (i); see also \cite{Do05}.

(iii). Under the hypothesis we have that the finely open set $[v>0]$ is $\mathcal{U}$-negligible, hence empty (cf. Remark \ref{rem 2.2}, (i)), where $v$ is the $\mathcal{U}$-excessive function from (i).

The second part of the statement follows by the first one since, under the assumption, $m([v>0])$ implies that $[v>0]$ is $\mathcal{U}$-negligible.

(iii). It follows similarly to the proof of (ii), but this time employing Remark \ref{rem 2.2}, (ii).
\end{proof}

\begin{proof}[\textit{\textbf{Proof of \Cref{prop 2.7}}}]
Let us prove first the equivalence of the \say{pointwise} versions of the assertions.
Let $G_n,n\geq 1$ be a sequence as in \Cref{defi:IG}.

(i) $\Rightarrow$ (ii). If $x\in G$ then one can easily verify that $\mathbb{P}^x$-a.s. we have $\inf\limits_n S_n^1=0$ and $T_n^1=T_{E\setminus G}, n\geq 1$, hence $(0, \zeta\wedge T_{E\setminus G})\subset I_G\cap[0,\zeta)$.  

(ii) $\Rightarrow$ (i). Let $d$ be a metric on $E$ which generates the topology.
By the strong Markov property, we get for each $k,n\geq 1$
\begin{flalign*}
\quad 0&\leq  \mathbb{E}^{x}\left\{ \sum\limits_{S_n^k<s< T_n^k}d(X_s,X_{s-}) \right \} = \mathbb{E}^{x}\left\{ \sum\limits_{S_n^k<s< T_n^k}d(X_s,X_{s-});  S_n^k<\infty\right \}\\
&=\mathbb{E}^{x}\left\{\mathbb{E}^{X_{S_n^{k}}}\left\{\sum\limits_{0<s< T_{E\setminus G}} d(X_s,X_{s-})\right\} ; S_n^{k}<\infty\right\}\\
&= 0,
\end{flalign*}
where the last equality follows from assumption (ii) and the fact that due to the right continuity of $X$, $X_{S_n^{k}}\in \overline{G}_n \subset G$ on $S_n^k<\infty$ $\mathbb{P}^{x}$-a.s. for all $x\in E$.
In other words, we obtained for each $k\geq 1$
\begin{equation*}
\mathbb{P}^{x}\left\{\omega :  X_{t-}(\omega)\neq X_t(\omega) \mbox{ for some } t\in (S_n^k(\omega), T_n^k(\omega))\cap [0, \zeta(\omega)\neq \emptyset\right\}=0,
\end{equation*}
so assertion (i) follows by the fact that $I_G=\mathop{\bigcup}\limits_n\mathop{\bigcup}\limits_k \left(S_n^k(\omega), T_n^k(\omega)\right)$.

(ii) $\Rightarrow$ (iii). To this end, assume that (ii) holds and recall that the process $X^{G'}$ killed upon leaving $G':=E\setminus (E\setminus G)^r$, is a right Markov process on $G'$, so by Proposition \ref{prop 2.4}, (i), the function
$$
v_{G'}(x):= \mathbb{P}^{x} (\{\omega : [0, \zeta\wedge T_{E\setminus G}(\omega)) \ni t \longmapsto X_t(\omega) \mbox{ is not continuous} \} ), \;\; x\in G',
$$
is excessive w.r.t. $X^{G'}$, hence finely continuous on $G'$. 
Since $G$ is finely dense in $G'$ and because $v_{G'}=v_G=0$ on $G$, it follows that $v_{G'}=0$ on $G'$. 

Since $T_{E\setminus G}=T_{E\setminus G'}$ $\mathbb{P}^x$-a.s. for all $x\in G$, the implication (iii) $\Rightarrow$ (ii) is clear.

Let us now deal with the $m$-q.e. version of the equivalences:

(i) $\Rightarrow$ (ii). Suppose that $N\subset E$ is an $m$-inessential set (w.r.t. $\mathcal{U}$) such that  \eqref{diffusion} holds for all $x\in E\setminus N$. 
Then it follows immediately that $N\cap G$ is $m$-inessential w.r.t. $\mathcal{U}^G$ on $G$, hence this implication is proved.

(ii) $\Rightarrow$ (i). The converse of the above implication is not as trivial as the direct one because in general it is not true that an $m$-inessential set w.r.t. $\mathcal{U}^G$ is also an $m$-inessential set w.r.t. $\mathcal{U}$.  
Nevertheless, by \Cref{prop 2.4}, the function \begin{equation*}
    G\ni x\mapsto v_G(x):= \mathbb{P}^{x} (\{\omega : [0, \zeta_G(\omega)) \ni t \longmapsto X_t(\omega) \mbox{ is not continuous} \} )
\end{equation*}
is  $\mathcal{U}^G$-excessive.
Now, if (ii) holds then clearly $m([v_G<1])=0$.
Also, since $v_G$ is $\mathcal{U}^G$-excessive, the set $[v_G>0]$ is finely open w.r.t. $\mathcal{U}^G$. 
But a subset of $G$ which is finely open w.r.t. $\mathcal{U}^G$ is also finely open w.r.t. to $\mathcal{U}$, hence $[v_G>0]$ is finely open w.r.t. $\mathcal{U}$ and $m$-negligible. 
Consequently, by \cref{rem 2.2} we have that $[v_G>0]$ is $m$-polar, hence by the same \cref{rem 2.2} it is contained in an $m$-inessential set (w.r.t. $\mathcal{U}$). 

(ii) $\Leftrightarrow$ (iii). The implication (iii) $\Rightarrow$ (ii) follows immediately since $[V_G>0]\subset [v_{G'}>0]$. 
The converse follows by applying (i) $\Rightarrow$ (ii) with $E$ replaced by $G'$, noticing that in this case we have $I_G\cap [0, \zeta_{G})=(0,\zeta_{G'})$.
\end{proof}

\begin{proof}[\textit{\textbf{Proof of \Cref{prop 2.11}}}]
Since $\mathcal{U}_{\alpha_0}$ is strongly continuous on $L^p(m)$, so is $(P_t)_{t\geq 0}$, and the generator given by \eqref{2.2} can also be described by
\begin{align*}
& D({\sf L}_p^{m}):=\left\{f\in L^{p}(E,m) : \lim\limits_{t\to 0} \frac{P_tf-f}{t} \mbox{ exists in } L^p(E,m)\right\}\\
& {\sf L}_p^{m}(f):= \lim\limits_{t\to 0} \frac{P_tf-f}{t} \mbox{ for al } f\in D(L_p^{m}).
\end{align*}
So, if $f\in \mathcal{D}_0$, by taking expectations in \eqref{2.4} we have
\begin{equation*}
P_tf(x)-f(x)=\mathbb{E}^{x}\left\{\int_0^{t\wedge \zeta}{\sf L}_0 f(X_s) \;ds\right\} = \int_0^t P_s{\sf L}_0f(x)\; ds\quad m\mbox{-a.e.},
\end{equation*}
hence
$$
{\sf L}_p^mf=\lim\limits_{t\to 0} \frac{P_tf-f}{t}=\lim\limits_{t\to 0}\frac{1}{t}\int_0^t P_s{\sf L}_0f \; ds={\sf L}_0f \quad \mbox{ in } L^p(E, m).
$$
\end{proof}

\begin{proof}[\textit{\textbf{Proof of \Cref{lem 2.11}}}]
First of all, note that $m_{\alpha}^\rho << m$ and $L^p(m)\subset L^1(m_{\alpha}^\rho)$, since 
$$
\int_E |f| \; dm_{\alpha}^\rho = \int_E \rho U_{\alpha}|f| \; dm \leq |\rho|_{L^{p^\ast}(m)}|U_{\alpha}|f||_{L^p(m)}<\infty,
$$
where $\frac{1}{p}+\frac{1}{p\ast}=1$.

To show that $m<<m_{\alpha}^\rho$, if $m_{\alpha}^\rho(A)=0$ then $U_\alpha 1_A = 0 \;m$-a.e. for all $\alpha>0$.
Therefore, $\lim\limits_{\alpha\to \infty} \alpha U_\alpha 1_A =1_A$ in $L^p(m)$ and $m(A)=0$.

Now, if $f\in D({\sf L}_p^m)$, there exists $g\in L^p(m)$ s.t. $f=U_{\alpha}g$ in $L^p(m)$, hence by the above inclusion, the equality holds also in $L^1(m_{\alpha}^\rho)$.
The statement now follows from definition \eqref{2.2} and \Cref{prop 2.10}
\end{proof}

\subsection{Proofs for \Cref{s:main}}
\label{pf:2}

\begin{proof}[\textit{\textbf{Proof of \Cref{lem:nest}}}]
Clearly, $T_{E\setminus G_n} \leq T_{E\setminus G}$ hence $\sup\limits_n T_{E\setminus G_n} \leq T_{E\setminus G}$ a.s.
If $\sup\limits_n T_{E\setminus G_n}(\omega) < T_{E\setminus G}(\omega)$ it means that $\zeta(\omega)\leq\sup\limits_n T_{E\setminus F_n}(\omega)<T_{E\setminus G}(\omega)$, hence the result follows.
\end{proof}

\begin{proof}[\textit{\textbf{Proof of \Cref{lem:nestqe}}}]
Clearly, since $(F_n)_{n\geq 1}$ is an $m$-nest, we get that $m([v>0])=0$, so it remains to prove that $\mathbb{P}^x(T_{[v>0]}=\infty)=1$ for all $x\in [v=0]$, or equivalently, that $B^1_{[v>0]}=0$ on $[v=0]$.
But the last equality is clearly satisfied if $R^1_{[v>0]}=0$ on $[v=0]$, so let us prove that this latter property holds.
In fact, because $[v>0]=\mathop{\cup}\limits_k [v>1/k]$ hence $\lim\limits_kR^1_{[v>1/k]}=R^1_{[v>0]}$, it is sufficient to prove that $R^1_{[v>1/k]}=0$ on $[v=0]$ for all $k\geq 1$. 
To this end, note that $v_n:=B^1_{E\setminus \overline{F_n}}$ is $1$-excessive for every $n\geq 1$, hence
\begin{align*}
    R^1_{[v>1/k]}&=\inf\{w\;:\; w \mbox{ is } 1\mbox{-excessive and  }w\geq 1 \mbox{ on } [v>1/k]\}\\
    &\leq \inf\limits_n kv_n=0 \mbox{ on } [v=0],
\end{align*}
which completes the proof.
\end{proof}

\begin{proof}[\textit{\textbf{Proof of \Cref{thm 2.2}}}]
(i).  Recall that we set $\zeta_G:=\zeta\wedge T_{E\setminus G}$. 
By \Cref{coro:m-ae}, it is sufficient (also necessary) to prove that the killed process $X^G$ is a diffusion $m$-a.e., that is
\begin{equation*}
\mathbb{P}^{x}(\{\omega : [0, \zeta_G(\omega)) \ni t \longmapsto X_t(\omega) \mbox{ is continuous}\})=1 \;\; m\mbox{-a.e.} \mbox{ on } G
\end{equation*}
We show this in five steps.

\paragraph{Step I.} Let $(F_i)_{i\geq 1}$ be a common $m$-nest of open (respectively closed sets) for $(f_n)_{n\geq 1}$ and set $G_i:=F_i\cap G, i\geq 1$.
By \Cref{lem:nest} and the remark just above it, we have $m$-a.e. on $G$ that
\begin{align*}
&\mathbb{P}^{x}\left\{\omega :  X_{t-}(\omega)\neq X_t(\omega) \mbox{ for some } t\in (0, \zeta_G(\omega))\right\}\\
&=\mathbb{P}^{x}\left\{\mathop{\bigcup}\limits_{i\geq 1} \{\omega : X_{t-}(\omega)\neq X_t(\omega) \mbox{ for some } t\in (0, T_{E\setminus G_i}(\omega))\}\right\}\\
&\leq\mathbb{P}^{x} \left\{\mathop{\bigcup}\limits_{i\geq 1}\mathop{\bigcup}\limits_{n\geq 1}\mathop{\bigcup}\limits_{s\in \mathbb{Q}_+}\mathop{\bigcup}\limits_{\varepsilon\in \mathbb{Q}_+}\Omega_{i,n,s,\varepsilon}\right\},
\end{align*}
where
\begin{equation*}
\Omega_{i,n,s,\varepsilon}:=\{f_n(X_s)< \varepsilon,\;  f_n(X_{s+T_{[f_n\geq\varepsilon]}\circ \theta_s}) > \varepsilon,\; X_s\in G_i, \; T_{[f_n\geq\varepsilon]}\circ \theta_s < (T_{E\setminus G_i}\circ \theta_s\}.
\end{equation*}
Indeed, the equality is clear, and if $\overline{F_i}\ni X_{t-}(\omega)\neq X_{t}(\omega)\in G_i$ for some $i$, then by condition $\rm{Loc}_m(G)$, (i), 
there exists $n\geq 1$ and $\varepsilon \in \mathbb{Q}$ s.t. $f_n(X_{t-}(\omega))<\varepsilon < f_n(X_{t}(\omega))$.
Since $f_n$ is continuous on each $\overline{F_i}$, there exists $s\in \mathbb{Q}_+$ s.t. $f_n(X_{s}(\omega))<\varepsilon$ 
and $t$ is the first time when $X_\cdot(\omega)$ hits $[f_n\geq \varepsilon]$ (but also $[f_n> \varepsilon]$) after time 
$s$, i.e. $t=s+T_{[f_n\geq \varepsilon]}\circ \theta_s(\omega)$, which proves the assertion.

\paragraph{Step II.} By Step I, it is sufficient to prove that for each $n\geq 1$, $s>0$ and $\varepsilon\in \mathbb{R}_+$ we have $m$-a.e. on $G$
\begin{equation} \label{eq 2.1}
\mathbb{P}^{x}\left(\left\{f_n(X_s)< \varepsilon, \; f_n(X_{s+T_{[f_n\geq\varepsilon]}\circ \theta_s}) > \varepsilon,\; X_s\in G_i, \; T_{[f_n\geq\varepsilon]}\circ \theta_s < T_{E\setminus G_i}\circ \theta_s\right\}\right)=0.
\end{equation}
But by the strong Markov property
\begin{align*}
\mathbb{P}^{x}\Bigl(\Bigl\{f_n(X_s)< \varepsilon, \; &f_n\left(X_{s+T_{[f_n\geq\varepsilon]}\circ \theta_s}\right) > \varepsilon,\; X_s\in G_i, \; T_{[f_n\geq\varepsilon]}\circ \theta_s < T_{E\setminus G_i}\circ \theta_s \Bigr\}\Bigr)\\
&=\mathbb{E}^{x}\left\{1_{[f_n< \varepsilon]\cap G_{i}}(X_s) \; \mathbb{E}^{X_s}\left\{1_{[f_n > \varepsilon]}(X_{T_{[f_n\geq\varepsilon]}})\; ; \;T_{[f_n\geq\varepsilon]} < T_{E\setminus G_i}\right\}\right\},
\end{align*}
hence \eqref{eq 2.1} holds if $\mathbb{E}^{x}\left\{1_{[f_n > \varepsilon]}(X_{T_{[f_n\geq\varepsilon]}})\; ; \; T_{[f_n\geq\varepsilon]} < T_{E\setminus G_i}\right\}=0$ $m$-a.e. 
on $[f_n< \varepsilon] \cap G_i$, which is in turn true if 
\begin{equation} \label{eq 2.2}
B^{1}_{[f_n\geq\varepsilon]\cup (E\setminus G_i)}(f_n-\varepsilon)^{+}=0 \;m\mbox{-a.e. on } [f_n<\varepsilon]\cap G_i.
\end{equation}

\paragraph{Step III.} Since $(f_n-\varepsilon)^{+}=\mathop{\sup}\limits_k \varphi_k(f_n-\varepsilon)$ we get 
\begin{equation*}
B^{1}_{[f_n\geq\varepsilon]\cup (E\setminus G_i)}(f_n-\varepsilon)^{+}= \lim\limits_k B^{1}_{[f_n\geq\varepsilon]\cup (E\setminus G_i)}\varphi_k(f_n-\varepsilon) \;\; m\mbox{-a.e.},
\end{equation*}
hence relation \eqref{eq 2.2} is true if for each $k\geq 1$
\begin{equation} \label{eq 2.3}
B^{1}_{[f_n\geq\varepsilon]\cup (E\setminus G_i)}\varphi_k(f_n-\varepsilon)=0 \mbox{ on } [f_n<\varepsilon]\cap G_i \;\; m\mbox{-a.e.}
\end{equation}

\paragraph{Step IV.} Now we show that \eqref{eq 2.3} holds, which completes the proof.
Let $u:=\varphi_k(f_n-\varepsilon)\in \mathcal{C}$ and $f\in L^{p}(E,m)$ such that $u=U_1f$ $m$-a.e.
By condition $\rm{Loc}_m(G)$, (iii), we have 
\begin{equation*}
f=Lu-u=0\;\; m\mbox{-a.e. on } \mathop{\wideparen{[u=0]}}\limits^\circ\cap G_i \;(\mbox{respectively on } [u=0]\cap G_i).
\end{equation*}
Now, the trick is to choose $f^{\ast}:E\rightarrow \mathbb{R}$ a measurable version of $f$ such that 
\begin{equation*}
f^{\ast}=0 \; \mbox{ pointwise on } \; \mathop{\wideparen{[u=0]}}\limits^\circ\cap G_i \;(\mbox{respectively on } [u=0]\cap G_i).
\end{equation*}
Note that $U_1f^\ast$ is defined merely $m$-a.e. and $U_1f^{\ast}=u$ $m$-a.e., but in order to rigorously be able to replace $u$ with $U_1f^{\ast}$ in \eqref{eq 2.3}, as we plan to do in the sequel, we need the previous equality to hold $m$-q.e.
To show that this is indeed the case, let $v$ be the function defined in \Cref{lem:nestqe} and consider the sets $[U_1|f^{\ast}|=\infty]$ and $[v>0]$ which are both $m$-innesential. 
Then set
\begin{equation*}
E_0:=[U_1|f^{\ast}|<\infty]\cap[v=0], 
\end{equation*}
so that $E\setminus E_0$ is also $m$-inessential.
In particular, $E_0$ is finely open and
\begin{enumerate}
    \item[-] $\mathbb{P}^x\{T_{E\setminus E_0}=\infty\}=1 \mbox{ for all } x\in E_0$,
    \item[-] $\mathbb{P}^x\{\sup\limits_nT_{E_0\setminus F_n}\geq \zeta\}=1, \quad x\in E_0$,
    \item[-] $u|_{E_0} \mbox{ and } U_1f^\ast|_{E_0} \mbox{ are finely continuous on } E_0$,
\end{enumerate}
where the second property follows by \Cref{lem:nestqe}.
Therefore, by taking the restriction of $X$ and $\mathcal{U}$ to $E_0$ and since $m([|u|_{E_0}-(U_1f^\ast)|_{E_0}|>0])=0$, we can apply \Cref{rem 2.2}, (ii) on $E_0$ to deduce that that $u|_{E_0}=(U_1f^\ast)|_{E_0}$ $m$-q.e. on $E_0$, and finally, since $E\setminus E_0$ is $m$-innesential, that 
\begin{equation*}
u=U_1f^{\ast}\; m\mbox{-q.e. on E}
\end{equation*}
Let us generically set $A:=[f_n\geq\varepsilon]\cup (E\setminus G_i)$.
Using first Remark \ref{rem 2.2}, (iv) and then the strong Markov property, we get that for $m$-a.e. $x\in E_0\cap G$
\begin{fleqn}
\begin{align*}
B_{A}^{1}&u(x)=B_{A}^{1}(U_1f^{\ast})(x)=\mathbb{E}^{x}\{e^{-T_{A}}U_1f^{\ast}(X_{T_{A}})\}\\
&=\mathbb{E}^{x}\bigg\{e^{-T_{A}}\mathbb{E}^{X_{T_{A}}}\bigg\{\int_0^{\infty}e^{-t}f^{\ast}(X_t)\; dt\bigg\}\bigg\}=\mathbb{E}^{x}\bigg\{e^{-T_{A}}\int_0^{\infty}e^{-t}f^{\ast}(X_{t+T_{A}})\; dt\bigg\}\\
&=\mathbb{E}^{x}\bigg\{\int_0^{\infty}e^{-(t+T_{A})}f^{\ast}(X_{t+T_{A}})\; dt\bigg\}=\mathbb{E}^{x}\bigg\{\int_{T_{A}}^{\infty}e^{-t}f^{\ast}(X_{t})\; dt\bigg\}\\
&=U_1f^{\ast}(x)-\mathbb{E}^{x}\bigg\{\int_0^{T_{A}}e^{-t}f^{\ast}(X_{t})\; dt\bigg\}=0 \mbox{ on } [f_n<\varepsilon]\cap G_i,
\end{align*}
\end{fleqn}
because $f^{\ast}=0$ pointiwse on $\mathop{\wideparen{[u=0]}}\limits^\circ\cap G_i\supset [f_n<\varepsilon]\cap G_i$ (respectively on $[u=0]\cap G_i\supset [f_n<\varepsilon]\cap G_i$).
Note that all the above expressions involving $f^{\ast}$ make sense on $E_0$.

The second part of assertion (i) follows by Proposition \ref{prop 2.4}, (iii).

\vspace{0.2 cm}
(ii).
Assume now that the killed process $X^G$ is a diffusion on $G$ $m$-a.e. 
Let $u\in  D(L)$ be $m$-q.c. and $f:E\rightarrow \mathbb{R}$ $\mathcal{B}$-measurable s.t. $f\in L^{p}(E,m)$ and $u=U_1f$ $m$-a.e.
Clearly, 
\begin{equation*}
    Lu=0 \; m\mbox{-a.e. on } \mathop{\wideparen{[u=0]}}\limits^\circ\cap G:=D \mbox{ if and only if } f=0  \; m\mbox{-a.e. on } D.
\end{equation*}

Let $\mathcal{U}^D$ denote the resolvent of the killed process upon leaving $D$, given by \eqref{eq:resolventG}. 
Now, as in Step IV, if we set $ E_0:=[U_1|f|<\infty]\cap [v=0]$ where $v$ is given by \Cref{lem:nest} for some $m$-nest $(F_n)_{n\geq 1}$ attached to $u$, then $u=U_1f$ $m$-q.e. on $E$.
Moreover, one can easily observe that $(0, \zeta_G) \ni t\mapsto u(X_t) \in \mathbb{R}$ is continuous $\mathbb{P}^x$-a.s. $m$-a.e. $x\in G$, and consequently that $B_{E\setminus D}^{1}u=0$ $m$-a.e. on $G$.
Therefore,
$$
U_1^Df(x)=u(x)-B_{E\setminus D}^{1}u(x)=0 \; m\mbox{-a.e. on } D\cap E_0.
$$ 
Since $m(E\setminus E_0)=0$ the above equality holds $m$-a.e. on $D$, and by the resolvent equation it leads to 
\begin{equation*}
    U_\alpha^Df=0 \; m\mbox{-a.e. on } D \mbox{ for each } \alpha >0. 
\end{equation*}
On the other hand, we have that $\mathop{\lim}\limits_{\alpha \to \infty}\alpha U_\alpha^D g=g$ pointwise on $D$ for every $g$ bounded and continuous on $G$.
Also, it is easy to see that $|U_\alpha^{D}|_{L^{p}(m|_D)}\leq |U_\alpha|_{L^{p}(m)}$ for every $\alpha>0$, and by a density argument it follows that $(U_\alpha^D)_{\alpha >0}$ is strongly continuous on $L^{p}(m|_D)$.

The result now follows since $f=\lim\limits_{\alpha \to \infty} \alpha U_\alpha^D f = 0$, the convergence being in $L^{p}(m|_D)$.
\end{proof}

\begin{proof}[\textit{\textbf{Proof of \Cref{coro 2.6}}}]
$(i)$ Let  $x \in E$ and set $\nu_x:=\delta_x \circ U_{\alpha_0}$.

By \Cref{prop 2.10}, we have that for any $\alpha>\alpha_0$, $\mathcal{U}_\alpha$ extends to strongly continuous resolvent on $L^1(E,\nu)$, whose generator is denoted by $({\sf L}_1^{\nu_x}, D({\sf L}_1^{\nu_x}))$, as in \eqref{2.2}.
By dominated convergence, we have that $\mathop{\lim}\limits_{t\to 0}\frac{P_tu-u}{t}=Lu$ in $L^{1}(E,\nu_x)$ for all $u\in \mathcal{C}$, hence $\mathcal{C}\subset D({\sf L}_1^{\nu_x}))$. 
Then property $(\rm{Loc}(G))$ holds for $G$ with respect to $\nu_x$. 
This is true for each $x\in E$, hence we can apply \Cref{thm 2.2} to deduce that $X$ is a diffusion in $G$ $\mathcal{U}$-a.e.
Then by \Cref{prop 2.4} (ii) applied for $X^G$ we deduce that $X^G$ is a diffusion on $G$, so the result follows by applying \Cref{prop 2.7}.

To prove (ii), let $u\in D_\alpha (L)\cap C_b(E)$ and notice that by Theorem \ref{thm 2.2}, (ii), applied for $\nu_x$ and each $x\in E$, we get that 
$$
Lu=0 \mbox{ on } \mathop{\wideparen{[u=0]}}\limits^\circ \quad \mathcal{U}\mbox{-a.e.}
$$
If $|Lu|$ is finely lower semi-continuous so that $[|Lu|>0]\cap \mathop{\wideparen{[u=0]}}\limits^\circ$ is finely open, hence empty according to Remark \ref{rem 2.2}, (i).
\end{proof}

\begin{proof}[\textit{\textbf{Proof of \Cref{coro:nonh}}}]
Let  $\overline{x} \in E\times[0,\infty)$ and set $\nu_{\overline{x}}:=\delta_{\overline{x}} \circ \overline{U}_{\alpha_0}$.
By \Cref{prop 2.10} we have that $\overline{\mathcal{U}}_{\alpha_0}$ extends to strongly continuous resolvent on $L^1(E\times[0,\infty),\nu_{\overline{x}})$, whose generator is denoted by $({\sf L}_1^{\nu_{\overline{x}}}, D({\sf L}_1^{\nu_{\overline{x}}}))$, as in \eqref{2.2}.
 Now one can easily show that $(\Lambda, D_{\alpha_0}(\Lambda))\subset ({\sf L}_1^{\nu_{\overline{x}}}, D({\sf L}_1^{\nu_{\overline{x}}}))$, and from now on the proof continues as for \Cref{coro 2.6}, (i).
\end{proof}

\begin{proof}[\textit{\textbf{Proof of \Cref{prop:Dnatural}}}]
Since the second part of the proposition can be easily deduced from the first part, let us prove only the first statement which regards the existence of the common nest.
To this end, we use again \Cref{lem:finer_top} from Appendix to assert that there exists a Ray topology $\mathcal{T}$ which is finer than both $\tau$ and $\tau'$, and furthermore, by Theorem 1.5 from \cite{BeBo05}, there exists an $m$-nest of $\mathcal{T}$-compact sets $(K_n)_n$. 
In particular, $(F_n)_n$ is an $m$-nest of closed sets with respect to both $\tau$ and $\tau'$.
Also, since $F_n$ is $\mathcal{T}$-compact, $\tau|_{F_n}=\mathcal{T}|_{F_n}=\tau'|_{F_n}$ for all $n\geq 1$.
\end{proof}


\subsection{Proofs for \Cref{s:applications}}
\label{pf:3}

\begin{proof}[\textit{\textbf{Proof of \Cref{coro 5.1}}}]
First of all, note that by \Cref{prop 2.11} we have that $({\sf L}_p^{\overline{m}}, D({\sf L}_p^{\overline{m}}))$ introduced by \eqref{2.2} from Section 2 is a closed extension of $({\sf L},\mathcal{D})$. 

Let us show that $\rm{Loc}_m(G)$ holds for $G$:
Because $(e_n)_{n\geq 1}$ is total in $H$, following Remark \ref{rem 3.2}, (iv) we can construct $(f_n)_{n\geq 1}\subset \mathcal{D}$ such that $f_n\geq 0,n\geq 1$ and for each $(s,x) \neq (t,y) \in I\times M$, there exists $n\geq 1$ with $f_n(s,x)<f_n(t,y)$; in particular, condition (i) from \Cref{defi:locG} is satisfied with $F_n=I\times M, n\geq 1$. 

Let $(\varphi_k)_{k\geq 1} \subset C_c^\infty(\mathbb{R})$ such that  $0\leq \varphi_k(x) \mathop{\nearrow}\limits_{k}x^+, x\in \mathbb{R}$.
Then $\varphi_k(f_n-\varepsilon) \in \mathcal{D}$ for all $n,k\geq 1,\varepsilon\in \mathbb{R}_+$, hence condition (ii) from \Cref{defi:locG} is also verified.

Finally, since ${\sf L}f=0$ on $\mathop{\wideparen{[f=0]}}\limits^\circ \cap G$ for all $f\in \mathcal{D}$,  condition (iii) from \Cref{defi:locG} is satisfied, hence $\rm{Loc}_m(G)$ holds for $G$ and the statement follows by \Cref{thm 2.2}.
\end{proof}

\begin{proof}[\textit{\textbf{Proof of \Cref{coro:4.2}}}]
Let $\overline{x}\in I\times M$, and recall that by \Cref{prop 2.10},  $\overline{\mathcal{U}}_{\alpha_0}$ is strongly continuous on $L^1(\delta_{\overline{x}}\circ \overline{U}_{\alpha_0})$. 
Since by hypothesis $ \mathcal{D} \cup {\sf L}(\mathcal{D}) \subset L^1(\delta_{\overline{x}}\circ \overline{U}_{\alpha_0})$, we can apply \Cref{coro 5.1} for $\overline{m}=\delta_{\overline{x}}\circ \overline{U}_{\alpha_0}$ and $p=1$ to deduce that $X$ is a diffusion in $G$ $\delta_{\overline{x}}\circ\overline{U}_{\alpha_0}$-a.e., hence $\overline{\mathcal{U}}$-a.e. because $\overline{x}$ was arbitrarily chosen. 
Then by \Cref{prop 2.4}, (ii) it follows that the killed process $X^G$ is a diffusion on $G$, and by \Cref{prop 2.7} we deduce that $X$ is a diffusion in $G$.
\end{proof}

\begin{proof}[\textit{\textbf{Proof of \Cref{lem:bbound}}}]
If $F\in D_0(L)$ so that $F(\mu)=\psi(\mu(f_1),\cdots,\mu(f_n)), \mu \in M(E)$, and if we set $\overline{f}(x):=(f_1(x),\cdots, f_n(x)), x\in E$, then one can easily get from \eqref{eq:diff} that
    \begin{align*}
    F'(\mu;x)&=\left\langle D\psi(\mu(f_1),\cdots,\mu(f_n)),\overline{f}(x)\right\rangle,\\
    F''(\mu;x)&=\left\langle D^2\psi(\mu(f_1),\cdots,\mu(f_n))\overline{f}(x),\overline{f}(x)\right\rangle.
\end{align*}
Now, \eqref{eq:bbound1} follows by employing the definition of $LF(\mu)$ and the fact that $b,c$ and the kernel $n$ are bounded, whilst \eqref{eq:bbound2} follows from \eqref{eq:bbound1} and \cite[Proposition 2.5]{Fi88}.
\end{proof}

\begin{proof}[\textit{\textbf{Proof of \Cref{cor5.5}}}]
$i)\Longrightarrow ii).$ 
Let $z\in E$, $\mu \in G$ and $\varepsilon>0$ such that $\mu+\varepsilon \delta_z\in G$; note that such $\varepsilon>0$ exists because $G$ is open and $\lim\limits_{\varepsilon\to 0}\mu+\varepsilon\delta_z=\mu$ weakly in $M(E)$.

Further, let $0\leq\psi\in C_c^\infty(\mathbb{R})$ such that $[0,\varepsilon]\subset \mathbb{R}\setminus supp(\psi)$ and consider
\begin{align*}
 \psi_\mu(x)&:=\psi(x-\mu(1)), \quad x\in \mathbb{R}, \\
 F_\mu(\nu)&:=\psi_\mu(\nu(1)), \quad \nu\in M(E).
\end{align*}
Notice that $F_\mu \in D_{00}(L)$ and $\mu+\varepsilon\delta_z\in \mathop{\wideparen{[F_\mu=0]}}\limits^\circ\cap G$.

Now, on the one hand, since $X$ is a diffusion in $G$, by \Cref{lem:bbound} and \Cref{coro 2.6}, (ii) we get that $LF_\mu(\mu+\varepsilon\delta_z)=0$.
On the other hand, since $F_\mu'(\mu+\varepsilon\delta_z;x)=\psi'(\varepsilon)=0=\psi''(\varepsilon)=F_\mu''(\mu+\varepsilon\delta_z;x)$ for all $x\in E$, one can easily see that
\begin{align*}
0&=LF_\mu(\mu+\varepsilon\delta_z)=\int_E\int_0^\infty F_\mu(\mu+\varepsilon\delta_z+u\delta_x) n(x,du)(\mu+\varepsilon\delta_z)(dx)\\
&\geq \varepsilon \int_0^\infty\psi(\varepsilon+u)n(z,du).
\end{align*}
Now, varying $\varepsilon$ and $\psi$, by a monotone class argument we deduce that $n(z,\cdot)\equiv 0$, and since $z\in E$ was chosen arbitrarily, the implication is proved.

\medskip
\noindent{$ii)\Longrightarrow iii).$}
Taking into account \Cref{lem:bbound}, this implication is easily deduced from \Cref{coro 2.6}, (i).

The implication $ii)\Longrightarrow iii)$ is trivial, so the result is completely proved.
\end{proof}

\begin{proof}[\textit{\textbf{Proof of \Cref{coro:ibranch}}}]
If $(\widetilde{Q}_t)_{t\geq 0}$ denotes the transition function of $X$, then similarly to the proof of \Cref{lem:bbound}, we get that $(\widetilde{L}, \widetilde{D}_{00}(\widetilde{L}))\subset (\widetilde{L},D_\alpha(\widetilde{L}))$ for $\alpha>\alpha_0$, where the former generator is the one given by \eqref{eq:dgen}-\eqref{eq:gen} with $V_u(\mu):=c\mu(1), \mu\in M(E)$, where $c>0$ is a generic constant.

Next, it is easy to see that condition $\rm Loc(M(E))$ given in \Cref{defi:pLocG} is satisfied by choosing $(f_n)_n\subset \widetilde{D}_{00}(\widetilde{L})$ with the required separation property, so \Cref{coro 2.6} can be immediately employed to get the result.
\end{proof}

\begin{proof}[\textit{\textbf{Proof of \Cref{lem 3.1}}}]
Let $D(L_0)\ni u:=U_\alpha f \in D(\Lambda, \mathcal{H})$ for some $f\in \mathcal{H}$.
The statement follows if we show that
\begin{equation} \label{eq 3.2}
G_\alpha f=U_\alpha f + G_\alpha\Lambda U_\alpha f.
\end{equation}
Indeed, if \eqref{eq 3.2} holds then $u= G_\alpha (f-\Lambda U_\alpha f) \in D(L^{\Lambda})$ and
\begin{equation*}
L^{\Lambda} u = \alpha G_\alpha (f-\Lambda U_\alpha f) -(f-\Lambda U_\alpha f) = \alpha U_\alpha f - f + \Lambda u = L_0u +\Lambda u.
\end{equation*}

To prove \eqref{eq 3.2}, note that by \eqref{eq 3.1}, the rhs of the equality belongs to $\mathcal{F}$, and for all $v\in \mathcal{V}$
\begin{align*}
\mathcal{E}_\alpha (U_\alpha f + G_\alpha\Lambda U_\alpha f, v)&=\mathcal{E}_\alpha^{0}(U_\alpha f, v) - \langle \Lambda U_\alpha f, v \rangle + \mathcal{E}_\alpha (G_\alpha \Lambda U_\alpha f, v)\\
&=\langle f, v \rangle - \langle \Lambda U_\alpha f, v \rangle + \langle \Lambda U_\alpha f, v \rangle \\
&=\langle f, v \rangle.
\end{align*}
The claim follows by the uniqueness of $W_\alpha (= G_\alpha \mbox{ on }\mathcal{H})$ satisfying \eqref{eq 3.1}.
\end{proof}

\begin{proof}[\textit{\textbf{Proof of \Cref{coro:nonqr}}}]
First of all, notice that by \Cref{lem 2.11} and \Cref{prop 2.10} we may assume without loss of generality that $p=1$, $m$ is finite and $(P_t)_{t>0}$ is a semigroup of (quasi-)contractions.
In particular, $\mathcal{C}\subset D({\sf L}_1^m)$.

Using that the resolvent $(V_\alpha)_{\alpha>0}$ is strongly continuous on $L^1$, as in the proof of Theorem 2.2 from \cite{BeBoRo06} (see also Lemma 2.1), we can construct a set $F\in \mathcal{B}$ and a resolvent of sub-Markovian kernels $\cw=(W_\alpha)_{\alpha>0}$ on $F$ such that
\begin{enumerate}
    \item[-] $m(E\setminus F)=0$
    \item[-] $W_\alpha = V_\alpha,\alpha >0$,  as operators on $L^p(E,m)$
    \item[-] The set of $\ce(\cw_\beta)$ all $\cw_\beta$-excessive functions is min-stable, contains the positive constant functions and generates $\cb|_{F}$, and for each $n\geq 1$, we have
    $f_n|_{F}=W_\beta g_n^+|_{F}-W_\beta g_n^-|_{F}$.
\end{enumerate}
Let $\calr$ be a Ray cone associated with $\cw_\beta$ such that $k\wedge W_\beta g_n^+|_{F}\in \calr$ for all
$k,n \geq 1$, and consider the saturation $F_1$ 
of $F$ with respect to $\cw_\beta$ (see Appendix for details) and the
resolvent of kernels $\cw^1=(W^1_\al)_{\al >0}$ on $(F_1,\cb_1)$, 
whose restriction to $F$ is $\cw|_F$ and $W^1_\al (1_{F_1\setminus
F})=0$.  
We endow $F_1$ with the (Ray) topology $\ct$ induced by $\widetilde{\calr}:= \{ \widetilde{v}: v\in \calr \}$, where $\widetilde{v}$ is the unique $\mathcal{W}^1_\beta$-excessive extension of $v\in \mathcal{R}$ from $F$ to $F_1$.
It is a Lusin
topology on $F_1$ such that $\cb_1$ is the $\sigma$-algebra of all
Borel sets on $E_1$ and $\cw^1$ is the resolvent of a right
process $X^1$ with state space $F_1$, we have
$W^1_\al =V_\al$ for all
$\al >0$, regarded as an equality of operators on $L^p(F_1, {m}_1),$
where $m_1$ is the measure on $(F_1, \cb_1)$ extending $m|_F$ with zero on $F_1\setminus F$.

Because condition ${\bf (D_m^p)}$ is in force,
it follows by Theorem 3.3 from \cite{BeBoRo06} and Proposition 5.1 from \cite{BeRo11a} 
that $X$ has c\`adl\`ag trajectories, possibly after a trivial modification on an $m_1$-innesential set.

Now we show that process $X^1$ is a diffusion on $F_1$ $m_1$-q.e., by applying  Theorem \ref{thm 2.2}.
To this end, let $\widetilde{f_{n}}:=W_\beta^1g_n^+-W_\beta^1g_n^-$ be the extension of $f_n|_{F}$, $n, \geq 1$, from $F$ to $F_1$ by $\mathcal{T}$-continuity. 
We claim that condition ${\rm Loc_m(F_1)}$ is satisfied on $F_1$, with the augmented sequence $\{\widetilde{f_{n}},|\widetilde{f_{n}}|_\infty-\widetilde{f_{n}}\}_{n\geq 1}$ instead of  $({f_n})_{n\geq 1}$.
To this end, we first claim that $\{\widetilde{f_{n}},|\widetilde{f_{n}}|_\infty-\widetilde{f_{n}}\}_{n\geq 1}$ separates the points of $F_1$ in the sense of \Cref{defi:locG}, (i):
Let $\zeta, \eta \in E_1$ and assume that $\widetilde{f_{n}}(\zeta)=\widetilde{f_{n}}(\eta)$ for all $n\geq 1$.
From the above considerations we have
$ \zeta(g_{n})= L_\beta (\zeta, W_\beta g_{n})=\widetilde{f_{n}}(\zeta)=  \widetilde{f_{n}}(\eta)= \eta(g_{n})$ for all $n\geq 1$ and we conclude that $\zeta=\eta$,
since we assumed that the sequence $(g_n)_{n\geq 1}$ separates the finite measures on $E$.
Therefore, $(\widetilde{f_n})_n$ separates the points of $F_1$.
But if $\widetilde{f_n}(\zeta)<\widetilde{f_n}(\eta)$, then $(|\widetilde{f_{n}}|_\infty-\widetilde{f_{n}})(\zeta)>(|\widetilde{f_{n}}|_\infty-\widetilde{f_{n}})(\eta)$, so the claim is proved.

Next, condition \Cref{defi:locG}, (ii) is a consequence of $(\ref{5.1})$ from assumption $\rm\mathbf{\widetilde{Loc}_m}$ and the initial remark that $\mathcal{C}\subset D({\sf L}_1^m)$, whilst \Cref{defi:locG}, (iii) is clearly fulfilled.
Consequently, $X^1$ is a diffusion on $F_1$ $m_1$-q.e.

Let $E_o$ be the disjoint union of $F_1$ and $E\setminus F$, endow $E\setminus F$ 
with any Lusin topology having as Borel $\sigma$-algebra $\cb|_{E\setminus F}$, 
and consider the trivial extension $X$ of $X^1$ from $F_1$ to $E_o$; for details see, e.g., Subsection 3.2 from \cite{BeRo11a}. 
Then clearly $X$  has continuous paths $m_o$-q.e. 
and  the  transition function  of $X$,  
regarded as a family of operators on $L^p(E_o, m_o)$,  
coincide  with $(P_t)_{t\geq 0}$,  where $m_o$ is the measure on $(E_o, \cb_o)$ extending $m$ with zero on $E_o\setminus E=F_1\setminus F$.

\end{proof}

\section{Appendix: Basics on right processes with c\`adl\`ag trajectories}

\paragraph{Excessive functions, natural topologies and right processes.}
Here we follow mainly the terminology of \cite{BeBo04a}, and we refer to the classical works \cite{BlGe68}, \cite{Sh88} and the references therein. 

Let $(E, \mathcal{B})$ be a Lusin measurable space.
We denote by $(b)p\mathcal{B}$ the set of all numerical, (bounded) positive $\mathcal{B}$-measurable functions on $E$.
Throughout, by $\mathcal{U}=(U_\alpha)_{\alpha>0}$ we denote a resolvent family of (sub-)Markovian kernels on $(E, \mathcal{B})$.
If $q >0$, we set $\mathcal{U}_q:=(U_{q+\alpha})_{\alpha>0}$.

\begin{defi} \label{defi 4.1}
A $\mathcal{B}$-measurable function $v:E\rightarrow \overline{\mathbb{R}}_+$ is called {\rm excessive} (w.r.t. $\mathcal{U}$) if $\alpha U_\alpha v \leq v$ for all $\alpha >0$ and $\mathop{\sup}\limits_\alpha \alpha U_\alpha v =v $ point-wise; by $\mathcal{E(\mathcal{U})}$ we denote the convex cone of all excessive functions w.r.t. $\mathcal{U}$.
\end{defi}

Assume that $\mathcal{U}$ is the resolvent of a right Markov  process 
$X=(X(t), \mathcal{F}_t, \mathbb{P}^x)$ 
with state space $E$, a Lusin topological space. 
Then a non-negative real valued $\mathcal{B}$-measurable function $v$  is excessive (w.r.t. $\mathcal{U}$) if and only if $(v(X(t)))_{t\geqslant 0}$ is a right continuous $\mathcal{F}_t$ -supermartingale w.r.t. $\mathbb{P}^x$  for all $x\in E$; see e.g.
Proposition 1 from \cite{BeCi18}.

If a $\mathcal{B}$-measurable function $w: E \rightarrow \overline{\mathbb{R}}_+$ is merely $\mathcal{U}_q$-supermedian (i.e. $\alpha U_{q+\alpha} w \leq w$ for all $\alpha > 0$), then its $\mathcal{U}_q${\it -excessive regularization} $\widehat{w} \in \mathcal{E(U)}$ is defined as
$\widehat{w}:=\mathop{\sup}\limits_\alpha \alpha U_{q +\alpha}w$.

\noindent
{\bf (H)} Throughout this paragraph we assume that $\mathcal{E}(\mathcal{U}_q)$ is min-stable, contains the constant functions, and generates $\mathcal{B}$ for one (hence all) $q >0$.

\medskip
Recall that {\bf (H)} is necessary (yet not sufficient) for $\mathcal{U}$ to be associated to a right process, as defined below; a practical way to check this condition for a given resolvent of kernels is given in e.g. \cite{BeRo11a}, page 846, and it is similar to $\mathbf{(H_0)}$ from the beginning of Section 2.

\begin{defi} \label{defi 3.3}
\begin{enumerate}[(i)]
\item The {\rm fine topology} on $E$ (associated with $\mathcal{U}$) is the coarsest topology on $E$ 
such that every $\mathcal{U}_q$-excessive function is continuous for some (hence all) $q>0$.
\item A topology $\tau$ on $E$ is called {\rm natural} if it is a Lusin topology (i.e. $(E,\tau)$ is homeomorphic to a Borel subset of a compact metrizable space) which is coarser than the fine topology, and whose Borel $\sigma$-algebra is $\mathcal{B}$.
\end{enumerate}
\end{defi}
\begin{rem} \label{rem 4.3}
The necessity of considering natural topologies comes from the fact that, in general, the fine topology is neither metrizable, nor countably generated.
\end{rem}

There is a convenient class of natural topologies to work with (as we do in Section 2), especially when the aim is to construct a right process associated to $\mathcal{U}$ (see Definition \ref{defi 4.4}). These topologies are called Ray topologies, and are defined as follows.

\begin{defi} \label{defi 4.5}
\begin{enumerate}[(i)]
\item If $q >0$ then a {\rm Ray cone}
associated with $\mathcal{U}_q$ is a cone $\mathcal{R}$ of bounded $\mathcal{U}_q$-excessive functions which is separable in the supremum norm, min-stable, contains the constant function $1$, generates $\mathcal{B}$, and such that $U_\alpha((\mathcal{R}-\mathcal{R})_+) \subset \mathcal{R}$ for all $\alpha > 0$.
\item A {\rm Ray topology} on $E$ is a topology generated by a Ray cone.
\end{enumerate} 
\end{defi}

\begin{rem} \label{rem 4.6}
\begin{enumerate}[(i)]
\item Clearly, any Ray topology is a natural topology.
\item By e.g. \cite{BeRo11a}, Proposition 2.2, a Ray cone always exists (its existence is  in fact equivalent with the validity of {\bf (H)}) and may be constructed as follows: start with a countable subset $\mathcal{A}_0\subset p\mathcal{B}$ which separates the points of $E$, and define inductively 
\begin{align*}
&\mathcal{R}_0:=U_q (\mathcal{A}_0)\cup \mathbb{Q}_+\\
&\mathcal{R}_{n+1}:=\mathbb{Q}_+\cdot \mathcal{R}_n \cup (\mathop{\sum}\limits_f\mathcal{R}_n )\cup (\mathop{\bigwedge}\limits_f \mathcal{R}_n) \cup (\mathop{\bigcup}\limits_{\alpha \in \mathbb{Q}_+}U_\alpha(\mathcal{R}_n))\cup U_q ((\mathcal{R}_n-\mathcal{R}_n)_+),
\end{align*}
where by $\mathop{\sum}\limits_f \mathcal{R}_n$ resp. $\mathop{\bigwedge}\limits_f\mathcal{R}_n$ we denote the space of all finite sums (resp. infima) of elements from $\mathcal{R}_n$.
Then, a Ray cone $\mathcal{R}$ is obtained by taking the closure of $\bigcup\limits_n \mathcal{R}_n$ w.r.t. the supremum norm.
\end{enumerate}
\end{rem}
The set of all natural topologies has the following remarkable structure:
\begin{lem} \label{lem:finer_top}
For any two natural topologies $\tau$ and $\tau'$ there exists a Ray (hence natural) topology which is finer than both $\tau$ and $\tau'$.
\end{lem}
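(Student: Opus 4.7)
The plan is to assemble a common Ray cone out of countable separating families for both $\tau$ and $\tau'$ via the inductive construction of Remark~\ref{rem 4.6}(ii), and then verify that its associated Ray topology refines both.

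Since $(E,\tau)$ and $(E,\tau')$ are Lusin (hence second countable), each admits a countable family of bounded continuous functions separating points and generating the corresponding topology as an initial topology; denote these by $(h_n)_{n\geq 1}\subset bC(E,\tau)$ and $(h'_n)_{n\geq 1}\subset bC(E,\tau')$. Since both $\tau$ and $\tau'$ are coarser than the fine topology, each $h_n$ and $h'_n$ is bounded and finely continuous. Invoking assumption $(\mathbf{H})$ and the standard Riesz-type decomposition of bounded finely continuous functions into differences of two bounded $\mathcal{U}_q$-excessive functions, I would write $h_n=v_n-w_n$ and $h'_n=v'_n-w'_n$ with $v_n,w_n,v'_n,w'_n\in\mathcal{E}(\mathcal{U}_q)\cap b\mathcal{B}$.

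Next, I would form $\mathcal{A}_0:=\{v_n,w_n,v'_n,w'_n:n\geq 1\}\subset p\mathcal{B}$, which is countable and separates the points of $E$, and apply the inductive procedure of Remark~\ref{rem 4.6}(ii) to produce a Ray cone $\mathcal{R}$. Let $\mathcal{T}:=\mathcal{T}(\mathcal{R})$ be the associated Ray topology, which is by Remark~\ref{rem 4.6}(i) a natural topology. To conclude $\mathcal{T}\supseteq\tau$ (and symmetrically $\mathcal{T}\supseteq\tau'$), it is enough to verify that each $h_n$ is $\mathcal{T}$-continuous, which in turn reduces to $v_n,w_n\in\mathcal{R}$, since $\mathcal{T}$ makes every function in $\mathcal{R}$ continuous and $h_n=v_n-w_n\in\mathcal{R}-\mathcal{R}$. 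The construction places $\alpha U_{q+\alpha}v_n\in\mathcal{R}$ for every $\alpha\in\mathbb{Q}_+$, and excessiveness of $v_n$ gives $\alpha U_{q+\alpha}v_n\nearrow v_n$ pointwise, so the final uniform-closure step of Remark~\ref{rem 4.6}(ii) is what is required to place $v_n$ inside $\mathcal{R}$.

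The main obstacle I expect is precisely this last point: the Ray cone construction closes under uniform limits but not under pointwise monotone limits, so the passage $\alpha U_{q+\alpha}v_n\nearrow v_n$ does not automatically yield $v_n\in\mathcal{R}$. The way to bridge the gap, which I would pursue, is to exploit the compact-nest structure available on Lusin spaces under $(\mathbf{H})$: on each compact element of a suitable nest of $\tau$-compacts (as provided e.g.\ by \cite[Theorem 1.5]{BeBo05} and used in the proof of \Cref{prop:Dnatural}), Dini's theorem upgrades monotone pointwise convergence of $\mathcal{U}_q$-excessive functions to uniform convergence, so that the uniform-closure clause in the construction of $\mathcal{R}$ captures $v_n$ piece by piece; the global assembly works because on a common nest of compacts the involved natural topologies coincide with the fine topology, which makes the excessive summands $v_n$ continuous there and reconciles the local approximations into a single element of $\mathcal{R}$.
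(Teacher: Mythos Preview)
Your strategy has a genuine gap at two places, and the fix you propose is circular.

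First, the ``standard Riesz-type decomposition'' you invoke---writing an arbitrary bounded finely continuous function as a difference of two bounded $\mathcal{U}_q$-excessive functions---is not standard and is not a consequence of hypothesis $(\mathbf{H})$ alone. The Riesz decomposition concerns splitting an excessive function into a potential part and a harmonic part, not expressing general finely continuous functions via excessive ones. Without this decomposition the rest of your argument does not get started.

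Second, even granting such a decomposition, your own diagnosis is correct: the Ray cone $\mathcal{R}$ is closed only under \emph{uniform} limits, so $\alpha U_{q+\alpha}v_n\nearrow v_n$ pointwise does not place $v_n$ in $\mathcal{R}$. Your proposed remedy via Dini on a nest of compacts fails for two reasons. The compact-nest result you cite (\cite[Theorem~1.5]{BeBo05}) is established under the domination hypothesis $(\mathbf{D}_m)$, which is not assumed in the present lemma; and more fatally, \Cref{prop:Dnatural} in this paper \emph{uses} \Cref{lem:finer_top} in its proof, so importing that machinery here is circular. Even setting circularity aside, Dini would give uniform convergence only on each compact $K_n$, not globally, so $v_n$ would still not lie in $\mathcal{R}$.

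The paper's proof avoids all of this by first invoking \cite[Proposition~2.1]{BeBo05}, which directly asserts that every natural topology is refined by some Ray topology. This reduces the problem to the case where $\tau$ and $\tau'$ are already Ray topologies with Ray cones $\mathcal{R}$ and $\mathcal{R}'$; then one simply runs the inductive construction of Remark~\ref{rem 4.6}(ii) starting from a countable set $\mathcal{A}_0$ containing generators for both $\mathcal{R}$ and $\mathcal{R}'$, producing a Ray cone whose uniform closure contains $\mathcal{R}\cup\mathcal{R}'$. The passage from ``natural'' to ``Ray'' is thus outsourced to the cited proposition, and the merging of two Ray cones is trivial.
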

\begin{proof}
By Proposition 2.1 from \cite{BeBo05} if a natural topology is given then there exists a Ray topology which is finer than it. 
Therefore, we may assume that $\tau$ and $\tau'$ are Ray topologies induced by the Ray cones $\mathcal{R}$ and $\mathcal{R}'$ respectively and we may construct a Ray cone such that its closure in the supremum norm includes both $\mathcal{R}$ and $\mathcal{R}'$.
\end{proof}

Let now $X=(\Omega, \mathcal{F}, \mathcal{F}_t , X(t), \theta(t) , \mathbb{P}^x)$ be a normal Markov process with state space $E$, shift operators $\theta(t):\Omega\rightarrow \Omega, \; t\geq 0$, and lifetime $\zeta$. 
Let $\mathcal{U}$ be the resolvent of $X$, i.e. for all $f\in b\mathcal{B}$ and $\alpha >0$
$$
U_\alpha f(x)=\mathbb{E}^{x}\Big\{\int_0^{\infty}e^{-\alpha t}f(X(t))dt\Big\},\quad  x\in E.
$$
To each probability measure $\mu$ on $(E, \mathcal{B})$ we associate the probability 
$$\mathbb{P}^\mu (A):=\mathop{\int} \mathbb{P}^x(A)\; \mu(dx)$$
for all $A \in \mathcal{F}$, and we consider the following enlarged filtration
$$
\widetilde{\mathcal{F}}_t:= \bigcap\limits_\mu \mathcal{F}_t^\mu, \; \; \widetilde{\mathcal{F}}:= \bigcap\limits_\mu \mathcal{F}^\mu,
$$
where $\mathcal{F}^\mu$ is the completion of $\mathcal{F}$ under $\mathbb{P}^\mu$, and $\mathcal{F}_t^\mu$ is the completion of $\mathcal{F}_t$ in $\mathcal{F}^\mu$ w.r.t. $\mathbb{P}^\mu$; in particular, $(x,A)\mapsto\mathbb{P}^{x}(A)$ is assumed to be a kernel from $(E,\mathcal{B}^{u})$ to $(\Omega, \mathcal{F})$, where $\mathcal{B}^{u}$ denotes the $\sigma$-algebra of all universally measurable subsets of $E$.

\begin{defi} \label{defi 4.4}
The Markov process $X$ is called 
{\rm right (Markov) process} if the following additional hypotheses are satisfied:
\begin{enumerate}[(i)]
\item The filtration $(\mathcal{F}_t)_{t\geq 0}$ is right continuous and $\mathcal{F}_t=\widetilde{\mathcal{F}}_t, t\geq 0$.
\item For one (hence all) $q >0$ and for each $f \in \mathcal{E}(\mathcal{U}_q)$ the process $f(X)$ has right continuous paths $\mathbb{P}^{x}$-a.s. for all $x\in E$.
\item There exists a natural topology on $E$ with respect to which the paths of $X$ are $\mathbb{P}^{x}$-a.s. right continuous for all $x\in E$.
\end{enumerate}
We would like to make the following convention: Whenever the space $E$ is given along with a Lusin topology $\tau$ and there is no risk of confusion, by saying that $X$ is a right process we implicitly assume that $X$ has $\mathbb{P}^x$-a.s. $\tau$-right continuous paths for all $x\in E$. 
\end{defi}

According to \cite{BlGe68}, Chapter II, Theorem 4.8, or \cite{Sh88}, Proposition 10.8 and Exercise 10.18, Definition \ref{defi 4.4} leads to a key probabilistic understanding of the fine topology, namely:

\begin{thm} \label{thm 4.6} If $X$ is a right process, then an universally $\mathcal{B}$-measurable function $f$ is finely continuous if and only if $(f(X(t)))_{t\geq 0}$ has $\mathbb{P}^{x}$-a.s. right continuous paths for all $x\in E$.
In particular,  $X$ has a.s. right continuous paths w.r.t. any natural topology on $E$
\end{thm}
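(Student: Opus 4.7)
The theorem has two halves: an iff characterization of fine continuity via the path behavior of $f(X_\cdot)$, and the upgrade of path right-continuity from the one natural topology furnished by Definition~\ref{defi 4.4}(iii) to every natural topology. I would treat them in that order, with the first resting on the classical probabilistic description of fine neighborhoods, and the second following by a countable separating-family argument.

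\emph{The equivalence.} The bridge to path regularity is the standard lemma: for $A \in \mathcal{B}$ and $x \in A$, $A$ is a fine neighborhood of $x$ if and only if $\mathbb{P}^x(T_{E\setminus A} > 0) = 1$. In terms of regular points, this amounts to the fact that the fine closure of $E\setminus A$ equals $(E\setminus A) \cup (E\setminus A)^r$. The ``only if'' direction reduces to a basic fine neighborhood $\bigcap_{i=1}^n \{|v_i - v_i(x)| < \varepsilon\}$ with $v_i \in \mathcal{E}(\mathcal{U}_q)$, available thanks to hypothesis (H) and the min-stability of $\mathcal{E}(\mathcal{U}_q)$, and then uses Definition~\ref{defi 4.4}(ii) with normality $X_0 = x$ to force $T_{E\setminus A} > 0$ $\mathbb{P}^x$-a.s. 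The ``if'' direction uses that balayage-type functions such as $y \mapsto \mathbb{E}^y[e^{-\alpha T_{E\setminus A}}]$ are $\alpha$-excessive and hence finely continuous, producing a finely open neighborhood of $x$ contained in $A$. Applied to $A_\varepsilon := \{y : |f(y) - f(x)| < \varepsilon\}$, the lemma gives the equivalence at the initial time: $f$ is finely continuous at $x$ iff $\lim_{t \downarrow 0} f(X_t) = f(x)$ $\mathbb{P}^x$-a.s. Right-continuity at general $t \geq 0$ is then obtained from the simple Markov property applied to a countable set of rational times, followed by a countable union of null sets. Conversely, if $f(X_\cdot)$ is $\mathbb{P}^x$-a.s.\ right-continuous for every $x$, then $f(X_t) \to f(X_0) = f(x)$ already gives fine continuity at $x$ via the lemma.

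\emph{From one to all natural topologies.} Given any natural topology $\tau$, second countability of the Lusin space $(E,\tau)$ supplies a countable family $(g_n)_n \subset C_b(E,\tau)$ which separates points and generates $\tau$. Since $\tau$ is coarser than the fine topology (Definition~\ref{defi 3.3}(ii)), each $g_n$ is finely continuous, so by the equivalence just proved, each $g_n(X_\cdot)$ is $\mathbb{P}^x$-a.s.\ right-continuous. Taking the countable union of the exceptional null sets, off a single $\mathbb{P}^x$-negligible set every $g_n(X_\cdot)$ is right-continuous; since $(g_n)_n$ generates $\tau$ and separates points, this forces the paths of $X$ themselves to be $\tau$-right-continuous.

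\emph{Principal obstacle.} The technical heart of the argument is the probabilistic characterization of fine neighborhoods, and in particular two points: verifying that the relevant balayage-type functions are genuinely $\alpha$-excessive, which requires $T_{E\setminus A}$ to be a stopping time and the balayage to be $\alpha$-excessive rather than merely supermedian (available precisely because $X$ is a right process), and combining hypothesis (H) with the min-stability of $\mathcal{E}(\mathcal{U}_q)$ to reduce arbitrary fine neighborhoods to the basic subbase generated by excessive functions, so that the first step of the ``only if'' direction is legitimate.
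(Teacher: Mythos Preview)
The paper does not supply its own proof of this theorem: it is stated as a known result, with the justification ``According to \cite{BlGe68}, Chapter II, Theorem 4.8, or \cite{Sh88}, Proposition 10.8 and Exercise 10.18''. Your proposal is essentially a sketch of the classical argument found in those references, and it is correct in outline: the probabilistic characterization of fine neighborhoods via $\mathbb{P}^x(T_{E\setminus A}>0)=1$, combined with Definition~\ref{defi 4.4}(ii) and the Markov property, yields the equivalence; the passage to all natural topologies then follows by pulling back along a countable separating family of $\tau$-continuous (hence finely continuous) functions, exactly as you describe.

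One small point worth tightening in the second half: ``separates points'' alone is not quite enough to conclude $\tau$-right-continuity of $X_\cdot$ from right-continuity of every $g_n(X_\cdot)$; you need the family $(g_n)_n$ to \emph{generate} $\tau$, i.e.\ that $g_n(y_k)\to g_n(y)$ for all $n$ forces $y_k\to y$ in $\tau$. You do say this, but it is worth making explicit that on a Lusin (hence metrizable separable) space such a countable family always exists, e.g.\ $g_n(\cdot)=d(\cdot,x_n)\wedge 1$ for a dense sequence $(x_n)$ and a compatible metric $d$.
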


\begin{defi}
If $u\in \mathcal{E}(\mathcal{U}_q)$ and $A \in \mathcal{B}$, then the $q$-order reduced function of $u$ on $A$ is given by
$$
R_q^A u = \inf \{ v \in \mathcal{E}(\mathcal{U}_q): \, v\geqslant u \mbox{ on } A \}.
$$
$R_q^A u$ is merely supermedian w.r.t.  $\mathcal{U}_q$, and we denote by $B_q^A u=\widehat{R_q^A u}$ its excessive regularization, called the {\rm balayage} of $u$ on $A$.
\end{defi}

The following fundamental identification due to G.A. Hunt holds (see e.g. \cite{DeMe78}):
\begin{thm} \label{thm 4.13}
If $X$ is a right process and $q>0$,  then for all $u\in \mathcal{E}(\mathcal{U}_q)$ and $A\in\mathcal{B}$
$$
B_q^A u=\mathbb{E}^x\{e^{-q T_A} u (X(T_A))\},
$$
where $T_A:=\inf \{ t>0 :  X(t)\in A \}$.
\end{thm}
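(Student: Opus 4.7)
My plan is to identify both sides as the unique $\mathcal{U}_q$-excessive function equal to $u$ on the set $A^r$ of regular points of $A$. Set
\[
v(x) := \mathbb{E}^x\{e^{-qT_A}\,u(X(T_A))\,\mathbf{1}_{\{T_A<\infty\}}\},
\]
and the goal is to prove $v = B_q^A u$ by sandwiching $v$ between two inequalities involving $R_q^A u$ and its regularization.

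First I would check that $v \in \mathcal{E}(\mathcal{U}_q)$. Measurability of $T_A$ for Borel $A$ is granted by the debut theorem in the right-process framework (the complete, right-continuous filtration of \Cref{defi 4.4}(i) together with Choquet capacitability). Excessivity follows from a strong Markov computation: for each $t>0$,
\[
e^{-qt}P_t v(x) = \mathbb{E}^x\{e^{-q(t + T_A\circ\theta_t)}\,u(X_{t + T_A\circ\theta_t})\mathbf{1}_{\{T_A\circ\theta_t<\infty\}}\},
\]
and the identity $t + T_A\circ\theta_t = \inf\{s \geq t : X_s \in A\} \geq T_A$ shows the integrand is dominated pointwise by $e^{-qT_A}u(X_{T_A})$, giving $e^{-qt}P_t v \leq v$. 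Integrating in $t$ yields $\alpha U_{q+\alpha}v \leq v$, and letting $t \downarrow 0$ (using $t+T_A\circ\theta_t \searrow T_A$, path right-continuity, and dominated convergence) gives $\sup_\alpha \alpha U_{q+\alpha} v = v$, so $v \in \mathcal{E}(\mathcal{U}_q)$.

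Second, for any $w \in \mathcal{E}(\mathcal{U}_q)$ with $w \geq u$ on $A$, the process $(e^{-qs}w(X_s))_{s\geq 0}$ is a right-continuous $\mathbb{P}^x$-supermartingale (as recalled just after \Cref{defi 4.1}). Optional stopping at $T_A \wedge n$ and Fatou as $n \to \infty$ yield
\[
w(x) \geq \mathbb{E}^x\{e^{-qT_A}\,w(X_{T_A})\,\mathbf{1}_{\{T_A<\infty\}}\}.
\]
The key technical input, standard in the right-process setting, is $X_{T_A} \in A \cup A^r$ on $\{T_A<\infty\}$; combined with $w \geq u$ on $A$ and the fine continuity of $w$ (which extends the inequality to $A^r$), this gives $w(X_{T_A}) \geq u(X_{T_A})$ a.s., hence $w \geq v$. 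Taking the infimum over such $w$ yields $v \leq R_q^A u$, and since $v$ is itself $\mathcal{U}_q$-excessive while $B_q^A u = \widehat{R_q^A u}$ is the largest excessive minorant of $R_q^A u$, we obtain $v \leq B_q^A u$.

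The reverse inequality is obtained by applying the same supermartingale argument to $w = B_q^A u$ itself. Indeed $B_q^A u$ is excessive and $B_q^A u = u$ on $A^r$ (fine continuity coupled with the definition of the excessive regularization), so $B_q^A u(X_{T_A}) \geq u(X_{T_A})$ on $\{T_A<\infty\}$ by the inclusion above. Optional stopping then gives $B_q^A u(x) \geq v(x)$, completing the proof. The main obstacle is establishing $X_{T_A}\in A\cup A^r$ on $\{T_A<\infty\}$ for \emph{Borel} $A$: for open $A$ it is immediate from right-continuity (in fact $T_A = 0$ on $A$, so $v = u$ on $A$ and the whole argument simplifies dramatically), but the Borel — and indeed analytic — case is the heart of Hunt's analysis and requires outer approximation by open sets, inner approximation by compacts, the Choquet capacitability theorem, and quasi-left-continuity of $X$, a bundle of analytic tools whose availability is precisely what the right-process hypothesis of \Cref{defi 4.4} together with Hypothesis~\textbf{(H)} guarantees.
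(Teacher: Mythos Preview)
The paper does not actually prove this theorem: it is stated as ``the following fundamental identification due to G.A.\ Hunt'' and referred to \cite{DeMe78}. So there is no in-paper argument to compare against; your sketch is an attempt to supply the classical proof.

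Your first two steps are fine: $v$ is $\mathcal{U}_q$-excessive by the strong Markov computation, and optional sampling applied to any competitor $w\in\mathcal{E}(\mathcal{U}_q)$ with $w\geq u$ on $A$ yields $v\leq R_q^A u$, hence $v\leq B_q^A u$.

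The third step, however, contains a genuine gap. You announce ``the reverse inequality'' and then apply the supermartingale argument to $w=B_q^A u$; but that argument gives, exactly as before,
\[
B_q^A u(x)\;\geq\;\mathbb{E}^x\bigl\{e^{-qT_A}\,B_q^A u(X_{T_A})\bigr\}\;\geq\;v(x),
\]
which is the \emph{same} direction $v\leq B_q^A u$ already established, not its converse. You have proved one inequality twice and the other not at all. The missing direction $B_q^A u\leq v$ does not follow from optional stopping of $B_q^A u$; one needs instead to exhibit $v$ as a candidate in the infimum defining the r\'eduite. The obstruction is that $v=u$ only on $A^r$, not on all of $A$ (for $x\in A\setminus A^r$ one has $T_A>0$ $\mathbb{P}^x$-a.s., so $v(x)$ may be strictly smaller than $u(x)$). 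The classical remedies are either (a) to observe that $A\setminus A^r$ is semipolar, whence $B_q^A u=B_q^{A^r}u\leq R_q^{A^r}u\leq v$ since $v\in\mathcal{E}(\mathcal{U}_q)$ and $v=u$ on $A^r$; or (b) to first treat open $A$ (where $A\subset A^r$, so $v\geq u$ on $A$ and the argument closes), and then pass to general Borel $A$ by capacitability and inner/outer approximation --- which is the Hunt route you allude to in your final paragraph but never carry out.

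A minor side remark: the inclusion $X_{T_A}\in A\cup A^r$ on $\{T_A<\infty\}$ does \emph{not} require quasi-left-continuity (which right processes need not possess). It follows directly from the strong Markov property and Blumenthal's zero--one law: on $\{T_A<\infty,\,X_{T_A}\notin A\}$ the infimum defining $T_A$ is not attained, hence $T_A\circ\theta_{T_A}=0$, and strong Markov forces $X_{T_A}\in A^r$.
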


It is well known that $T_A$ is a stopping time and that $B^A_q u$ is universally measurable for all $A\in \mathcal{B}$ and $u\in b\mathcal{B}$;
see \cite{BlGe68} or \cite{Sh88}.

\paragraph{Notions of "small" sets.}

\begin{defi} 
Let $m$ be a $\sigma$-finite measure on $E$.
\begin{enumerate}[(i)]
\item A set $A\in \mathcal{B}$ is called
\begin{enumerate}
\item[-] \say{ $\mathcal{U}$-negligible} if $U_q (1_A)\equiv 0$ for one (hence all) $q> 0$.
\item[-] \say{ polar}  if $\mathbb{P}^{x}(T_A<\infty)=0$ for $x\in E$.
\item[-] \say{ $m$-polar}  if $\mathbb{P}^{x}(T_A<\infty)=0$ for all $x\in E$ $m$-a.e.
\item[-] \say{ $m$-inessential}  provided that it is $m$-negligible and $\mathbb{P}^{x}(T_A<\infty)=0$ for all $x\in E\setminus A$.
\end{enumerate}
\item A property is said to hold $m${\it -quasi-everywhere} (resp. $\mathcal{U}$-a.e.), if there exists an $m$-inessential (resp. a $\mathcal{U}$-negligible) set $N$ s.t. the property holds for all $x\in E\setminus N$; on short, we write $m${\it -q.e.} instead of $m$-quasi-everywhere.
\end{enumerate}
\end{defi}

\begin{rem} \label{rem 2.2} 
For the reader convenience, let us recall several potential theoretic facts. 
\begin{enumerate}[(i)]
\item If $A \in \mathcal{B}$ is finely open and $\mathcal{U}$-negligible, then $A=\emptyset$.
\item If $m$ is a $\sigma$-finite measure on $E$ s.t. 
$m(A)=0$ implies $U_1(1_A)=0$ $m$-a.e. for all $A\in \mathcal{B}$, then any finely open and $m$-negligible set $A\in \mathcal{B}$ is $m$-polar. 
\item Any $m$-inessential set is $m$-polar; conversely, it is known that any set which is $m$-polar and $m$-negligible is the subset of an $m$-inessential set.
\item If $u\in b\mathcal{B}$ and $v$ is $\mathcal{B}$-measurable s.t. $v=u$ q.e., then $B^A_{1}v$ is well defined and equal to $B^A_{1}u$ $m$-a.e.
\end{enumerate}
\end{rem}

\paragraph{C\`adl\`ag paths in different topologies.}
As far as we know, the stability of the right continuity of the paths under the change of the (natural) topology ensured by \Cref{thm 4.6} can not be simply extended for left limits, without further conditions. 
To present such a condition, we adopt an $L^p$-framework, so let $m$ be a $\sigma$-finite measure on $E$ such that the resolvent $\mathcal{U}$ of $X$ is strongly continuous on $L^{p}(m)$ for some $1\leq p<\infty$.

Let us recall that an $q$-excessive function $s$ is called {\it regular} if for every sequence of $q$-excessive functions $u_n\mathop{\nearrow} \limits_n u$ it holds that $R_q (u-u_n)\mathop{\searrow} \limits_n 0$, where $R_q$ is the {\it reduction operator} of level $q \geq 0$. 
A $q$-excessive function $s$ is called $m$-regular if it has an $m$-version which is regular; see \cite{BeBo00} and \cite{BeBo05} for more details.

Consider the following {\it domination hypothesis}:

\vspace{0.2 cm}
\noindent
$\mathbf{(D_{m}).}$ There exists $0<f_0\in b\mathcal{B}(E)$ such that for some $q >0$ every $q$-excessive function $v$ dominated by $U_q f_0$ is $m$-regular.

The role of condition $\mathbf{(D_{m})}$ is expressed by the following fact which is a consequence of the results from \cite{BeBo05}. 

\begin{prop} \label{prop2.6}
If $X$ is a right process and $\mathbf{(D_{m})}$ holds then $X$ has $\mathbb{P}^x$-a.s. c\`adl\`ag trajectories in $E$ on $[0, \zeta)$ for all $x\in E$ $m$-q.e., with respect to all natural topologies.
\end{prop}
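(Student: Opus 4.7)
The plan is to first establish that $X$ has c\`adl\`ag trajectories $m$-q.e.\ in a carefully chosen Ray topology on $E$, and then transfer this property to every natural topology by a common-nest argument based on \Cref{lem:finer_top}.

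To carry out the first step, I would use condition $\mathbf{(D_m)}$ to build a Ray cone $\mathcal{R}$ all of whose elements are (bounded by a multiple of $U_q f_0$, hence) $m$-regular. Concretely, starting from a countable family $\mathcal{A}_0 \subset p\mathcal{B}$ that separates the points of $E$, I would run the inductive construction from \Cref{rem 4.6}(ii) but seed it with the functions $U_q f \wedge k U_q f_0$ for $f \in \mathcal{A}_0$ and $k \in \mathbb{N}$; since $\mathcal{E}(\mathcal{U}_q)$ is min-stable by hypothesis $\mathbf{(H)}$, every function produced along the way is $q$-excessive and dominated by some multiple of $U_q f_0$, so by $\mathbf{(D_m)}$ all the generators of $\mathcal{R}$ are $m$-regular. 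The Ray topology $\mathcal{T}_{\mathcal{R}}$ generated by $\mathcal{R}$ is then a natural topology in which the functions from $\mathcal{R}$ are continuous and separate points.

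Next, I would invoke the classical Hunt--Meyer characterization (as made precise in \cite{BeBo00}, \cite{BeBo05}) to assert that for any $m$-regular $q$-excessive $v$ in $\mathcal{R}$, the $q$-supermartingale $(e^{-qt} v(X_t))_{t\geq 0}$ admits $\mathbb{P}^x$-a.s.\ left limits on $[0,\zeta)$ for every $x$ outside some $m$-inessential set $N_v$: $m$-regularity amounts to continuity of the map $t \mapsto \mathbb{E}^x[e^{-qt}v(X_t)]$, and the set of starting points where this continuity fails to lift to pathwise left limits is finely open and $m$-negligible, hence contained in an $m$-inessential set by \Cref{rem 2.2}(ii)--(iii). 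Picking a countable sup-norm dense subset $\mathcal{R}_0 \subset \mathcal{R}$ (possible since any Ray cone is separable in the sup norm) and setting $N := \bigcup_{v \in \mathcal{R}_0} N_v$, which remains $m$-inessential, I obtain that for every $x \in E \setminus N$, $\mathbb{P}^x$-a.s., all trajectories $t \mapsto v(X_t)$, $v \in \mathcal{R}_0$, have left limits on $[0,\zeta)$; combined with the right-continuity already ensured by the right-process structure (\Cref{thm 4.6}) and the fact that $\mathcal{R}_0$ separates points of $E$ in the sense of $\mathcal{T}_{\mathcal{R}}$, this forces the paths of $X$ themselves to be c\`adl\`ag in $\mathcal{T}_{\mathcal{R}}$ for every $x \in E \setminus N$.

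Finally, for the transfer, let $\tau$ be any other natural topology. By \Cref{lem:finer_top} there exists a Ray topology $\mathcal{T}'$ finer than both $\tau$ and $\mathcal{T}_{\mathcal{R}}$, and by Theorem 1.5 of \cite{BeBo05} we may fix an $m$-nest $(K_n)_n$ of $\mathcal{T}'$-compact sets. Since a compact Hausdorff topology admits no strictly coarser Hausdorff topology, $\tau|_{K_n} = \mathcal{T}'|_{K_n} = \mathcal{T}_{\mathcal{R}}|_{K_n}$ for every $n$; combined with the nest property (which guarantees that $\mathbb{P}^x$-a.s., $X$ stays in some $K_n$ up to any time $< \zeta$ for $m$-q.e.\ $x$), this transfers the c\`adl\`ag property in $\mathcal{T}_{\mathcal{R}}$ to the c\`adl\`ag property in $\tau$, $m$-q.e., as desired. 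The main obstacle in this program is the middle step: upgrading abstract $m$-regularity of the excessive functions to genuine pathwise left limits outside a truly $m$-inessential (not merely $m$-negligible) set, which requires combining the fine-continuity of excessive functions with the characterization of when finely open $m$-negligible sets are $m$-polar recalled in \Cref{rem 2.2}.
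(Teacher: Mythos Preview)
Your strategy is in the right spirit---work in a Ray topology where $\mathbf{(D_m)}$ forces left limits, then transfer to an arbitrary natural topology---and this is exactly what the paper does, but the paper's execution is considerably more economical on two fronts.

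First, on the middle step: the paper does not attempt to build a Ray cone whose elements are individually dominated by multiples of $U_q f_0$ (a claim which is not obviously stable under the operations $U_\alpha(\cdot)$ and $U_q((\cdot)-(\cdot))_+$ in the inductive construction of \Cref{rem 4.6}(ii)), nor does it argue function-by-function that the bad set for each $v$ is finely open. Instead, given any natural topology $\tau$, the paper passes via \Cref{lem:finer_top} to a finer Ray topology and then invokes Theorems~1.5 and~1.3 of \cite{BeBo05} directly to obtain c\`adl\`ag paths $\mathbb{P}^m$-a.e.\ in that Ray topology. The upgrade from $m$-a.e.\ to $m$-q.e.\ is done in one stroke by observing (via \cite{BeRo11a}, Proposition~5.1) that
\[
x \longmapsto \mathbb{P}^x\bigl(t\mapsto X_t \text{ is not c\`adl\`ag w.r.t.\ } \tau\bigr)
\]
is itself $\mathcal{U}$-excessive, so its positivity set is finely open and $m$-negligible, hence $m$-polar by \Cref{rem 2.2}. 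Your version of this step---asserting that for each fixed $v$ ``the set of starting points where continuity fails to lift to pathwise left limits is finely open''---is the weakest link: you have not explained why that set should be finely open, and the clean way to see it is precisely the excessiveness of the global non-c\`adl\`ag probability above, applied once rather than generator-by-generator.

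Second, on the transfer step: your compact-nest argument (which is the content of \Cref{prop:Dnatural}) is correct but unnecessary here. Since the paper already passes to a Ray topology \emph{finer} than the given $\tau$, c\`adl\`ag paths in the finer topology are automatically c\`adl\`ag in $\tau$; no comparison of traces on compacts is needed. Your route would be required only if you insisted on proving c\`adl\`ag first in a single fixed Ray topology $\mathcal{T}_{\mathcal{R}}$ that is not a priori comparable to $\tau$---but there is no reason to fix $\mathcal{T}_{\mathcal{R}}$ in advance.
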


\begin{proof}
Let $\tau$ be an arbitrary natural topology on $E$. 
By \Cref{lem:finer_top} from Appendix, we may replace $\tau$ with a finer Ray topology on $E$. 
Now, with condition $\mathbf{(D_{m})}$ in force, Theorems 1.5 and 1.3 from \cite{BeBo05} entail that $X$ has c\`adl\`ag trajectories in $E$ on $[0, \zeta)$ with respect to $\tau$, $\mathbb{P}^m$-a.e.
Now, by the proof of Proposition 5.1 from \cite{BeRo11a} we have that the function 
\begin{equation*}
E\ni x\mapsto \mathbb{P}^x(\omega \in \Omega : t\mapsto X_t(\omega) \mbox{ is not c\`adl\`ag with respect to } \tau) 
\end{equation*}
is excessive, hence $[v>0]$ is finely open.
Also, the above discussion leads to $m([v>0])=0$, hence by \Cref{rem 2.2} we get that $[v>0]$ is contained in an $m$-inessential set.
In other words, $X$ has c\`adl\`ag trajectories with respect to $\tau$, $\mathbb{P}^x$-a.s. for all $x\in E$ $m$-q.e. 
\end{proof}

\paragraph{Existence of a right process with a given resolvent $\mathcal{U}$.}
Without further conditions, the assumption {\bf (H)} from the beginning of this section, although necessary, is not sufficient to ensure the existence of a right process associated with $\mathcal{U}$, but there is always a larger space on which such a process exists, and let us briefly recall its construction.

We denote by $Exc(\mathcal{U}_q)$ the set of all $\mathcal{U}_q$-excessive measures:
$\xi \in Exc(\mathcal{U}_q)$ if and only if $\xi$ is a $\sigma$-finite measure on $E$ and $\xi \circ \alpha U_{q+\alpha} \leq \xi$ for all $\alpha >0$.

\begin{defi} Let $q >0$.
\begin{enumerate}[(i)]
\item The {\it energy functional} associated with $\mathcal{U}_q$ is $L^{q}: Exc(\mathcal{U}_q)\times \mathcal{E}(\mathcal{U}_q) \rightarrow \overline{\mathbb{R}}_+$ given by
$$
L^{q}(\xi,v):=\sup\{\mu(v) \; : \; \mu \mbox{ is a } \sigma\mbox{- finite measure, } \mu \circ U_q \leq \xi\}
$$
\item The {\rm saturation} of $E$ (with respect to $\mathcal{U}_q$) is the set $E_1$ of all extreme points of the set $\{\xi \in Exc(\mathcal{U}_q)\; : \; L^{q}(\xi,1)=1\}$.
\end{enumerate}
\end{defi}

The map $E\ni x \mapsto \delta_x \circ U_q \in Exc(\mathcal{U}_q)$ is an embedding of $E$ into $E_1$ and 
every $\mathcal{U}_q$-excessive function $v$ has an extension $\widetilde{v}$ to $E_1$, defined as $\widetilde{v}(\xi):=L^{q}(\xi,v)$.
The set $E_1$ is endowed with the $\sigma$-algebra $\mathcal{B}_1$ generated by the family $\{\widetilde{v}: \; v\in \mathcal{E}(\mathcal{U}_q)\}$.
In addition, as in \cite{BeBoRo06}, sections 1.1 and 1.2, there exists a unique resolvent of kernels $\mathcal{U}^{1}=(U^{1}_\alpha)_{\alpha>0}$ on $(E_1, \mathcal{B}_1)$ 
which is an extension of $\mathcal{U}$ in the sense that $U^1_\alpha(1_{E_1\setminus E})\equiv 0$ and $(U^1_\alpha f)|_E=U_\alpha(f|_E)$ for all $f\in b\mathcal{B}_1,\alpha>0$, and it satisfies the assumption {\bf(H)} from the beginning of this section; more precisely, it is given by
\begin{equation} \label{eq 4.1}
U^{1}_\alpha f(\xi)=L^{q}(\xi, U_\alpha (f|_E)) 
\mbox{ for all } f \in bp\mathcal{B}_1, \xi\in E_1, \alpha >0.
\end{equation}
Notice that $(E_1,\mathcal{B}_1)$ is a Lusin measurable space, the map $x \mapsto\delta_x \circ U_q$ identifies $E$ with a subset of $E_1$, $E\in \mathcal{B}_1$ and $\mathcal{B}=\mathcal{B}_1|_E$.

We end this section with the following key result, according to (2.3)  from \cite{BeRo11a},  sections 1.7 and 1.8 in \cite{BeBo04a},  Theorem 1.3 from \cite{BeBoRo06}, and section  3 in \cite{BeBoRo06a}:

\begin{thm} \label{thm 4.15}
Suppose that assumption {\bf(H)} from the beginning of this section is satisfied.
Then there exists a right process on the saturation $(E_1,\mathcal{B}_1)$, associated with $\mathcal{U}^{1}$. Moreover, the following assertions are equivalent:
\begin{enumerate}[(i)]
\item There exists a right process on $E$ associated with $\mathcal{U}$.
\item The set $E_1\setminus E$ is polar (w.r.t. $\mathcal{U}_1$).
\end{enumerate}
\end{thm}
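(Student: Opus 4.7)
The theorem splits into two parts: constructing a right process on the saturation $E_1$ and establishing the equivalence (i)$\Leftrightarrow$(ii). My plan for the first part is to lift the Ray cone machinery from $E$ to $E_1$. Starting from hypothesis \textbf{(H)}, Remark 4.6 (ii) produces a Ray cone $\mathcal{R}$ for $\mathcal{U}_q$. I would extend each $v\in\mathcal{R}$ to $\widetilde{v}(\xi):=L^q(\xi,v)$ on $E_1$; the resulting family $\widetilde{\mathcal{R}}$ inherits min-stability, sup-norm separability, contains the constants, and generates $\mathcal{B}_1$ (the $\sigma$-algebra on $E_1$ being defined precisely in terms of such extensions). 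Using the explicit formula (4.1) for $\mathcal{U}^1$, a direct computation shows $U^1_\alpha\bigl((\widetilde{\mathcal{R}}-\widetilde{\mathcal{R}})_+\bigr)\subset\widetilde{\mathcal{R}}$, so $\widetilde{\mathcal{R}}$ is a Ray cone for $\mathcal{U}^1_q$ on $E_1$. Endowing $E_1$ with the induced Ray topology makes it a Lusin space, and the standard Ray--Knight construction (see the cited works \cite{BeBo04a,BeBoRo06,BeBoRo06a}) then produces a right process $X^1$ on $E_1$ with resolvent $\mathcal{U}^1$.

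For the implication (ii)$\Rightarrow$(i), I would identify $E$ with the Borel subset $\{\delta_x\circ U_q : x\in E\}$ of $E_1$. Polarity of $E_1\setminus E$ under $X^1$ means $\mathbb{P}^x(T_{E_1\setminus E}=\infty)=1$ for every $x\in E$, so the trajectories of $X^1$ started in $E$ remain in $E$ almost surely. Restricting $X^1$ to these trajectories preserves the filtration, the strong Markov property, and right-continuity in the Ray topology (whose trace on $E$ is a natural topology on $E$); formula (4.1) then yields that the resolvent of the restricted process is $\mathcal{U}$, producing the desired right process on $E$.

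The converse (i)$\Rightarrow$(ii) is the more delicate direction, and I expect it to be the main obstacle. The defining property $U^1_\alpha(1_{E_1\setminus E})\equiv 0$ gives immediately that $E_1\setminus E$ is $\mathcal{U}^1$-negligible. For starting points $x\in E$ the conclusion is straightforward: since both $X$ and the restriction of $X^1$ to trajectories starting in $E$ are right processes with resolvent $\mathcal{U}$ (the latter by (4.1)), they share the same finite-dimensional distributions, hence the same Skorokhod law, so $X^1$ stays in $E$ almost surely under $\mathbb{P}^x$. The subtle case is $\xi\in E_1\setminus E$, for which $\mathcal{U}^1$-negligibility alone does not a priori force polarity. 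Here my plan is to study the balayage $B^{E_1\setminus E}_q 1$ as the excessive regularization of $R^{E_1\setminus E}_q 1$ and to combine the $\mathcal{U}^1$-negligibility of $E_1\setminus E$ with the extreme-point characterization of $E_1$ (as the extreme normalized $\mathcal{U}^1_q$-excessive measures) to show that no point of $E_1\setminus E$ is regular for itself. Together with the fact that the excessive function $B^{E_1\setminus E}_q 1$ already vanishes on $E$ (by the argument for $x\in E$ just given) and is finely continuous, this forces $B^{E_1\setminus E}_q 1\equiv 0$ on $E_1$; Theorem 4.13 then converts this identity into $\mathbb{P}^\xi(T_{E_1\setminus E}=\infty)=1$ for all $\xi\in E_1$, yielding polarity.
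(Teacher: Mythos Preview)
The paper does not actually prove Theorem~\ref{thm 4.15}; it is stated as a known result, with the proof deferred entirely to the cited references (\cite{BeRo11a}, \cite{BeBo04a}, \cite{BeBoRo06}, \cite{BeBoRo06a}). So there is no in-paper argument to compare against, and your proposal is an attempt to reconstruct what those references do.

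Your outline for the existence of $X^1$ on $E_1$ and for (ii)$\Rightarrow$(i) is correct and is indeed the route taken in the cited literature: lift a Ray cone via $v\mapsto\widetilde v=L^q(\cdot,v)$, obtain a Ray topology on $E_1$, run the Ray--Knight construction, and then restrict to $E$ when $E_1\setminus E$ is polar.

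The direction (i)$\Rightarrow$(ii), however, has a genuine gap in your sketch. Two points need more than what you wrote. First, for $x\in E$, equality of resolvents gives equality of finite-dimensional distributions at deterministic times, but this alone does \emph{not} force $X^1$ to stay in $E$ for all $t$: the set $E_1\setminus E$ is $\mathcal U^1$-negligible, so $X^1$ could in principle visit it on a Lebesgue-null time set without affecting any $P_t$-marginal. One needs an argument at the level of path laws (e.g.\ both processes are right processes with the same semigroup, hence the same law on the canonical càdlàg path space after embedding $E\hookrightarrow E_1$, as in \cite[§1.7--1.8]{BeBo04a}). Second, and more seriously, your plan for $\xi\in E_1\setminus E$ is only a list of ingredients. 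You assert that ``no point of $E_1\setminus E$ is regular for itself'' will follow from the extreme-point description, but you do not say how; and even granting that, fine continuity of $B^{E_1\setminus E}_q 1$ plus vanishing on $E$ yields vanishing on $E_1$ only once you know $E$ is finely dense in $E_1$, which is itself part of what has to be proved. In the references this step is handled by the structural property of the saturation (every excessive measure of finite energy on $E_1$ is a potential, property (A1.1.a)), which forces the ``extra'' points to be semipolar and then polar; your sketch does not invoke this and would not close without it.
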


\paragraph{Existence of a right process with c\`adl\`ag trajectories associated with the resolvent $\mathcal{U}$.}
We end this section by recalling a general result that guarantees the existence of a right process with c\`adl\`ag trajectories associated to a given sub-Markovian resolvent of kernels $\mathcal{U}=(U_\alpha)_{\alpha>0}$ on a general Lusin measurable space $(E,\mathcal{B})$.

\vspace{0.2 cm}
\noindent
{\bf (Hypothesis).} Assume that $\mathcal{C}\subset b\mathcal{B}$ such that
\begin{enumerate}
\item[(H1)] $\mathcal{C}$ is a vector lattice, $1\in \mathcal{C}$.
\item[(H2)] There exists a countable subset of $\mathcal{C}_+$ which separates the points of $E$. 
\item[(H3)] $U_\alpha (\mathcal{C})\subset \mathcal{C}$ for all $\alpha>0$.
\item[(H4)] $\lim\limits_{\alpha \to \infty} \alpha U_\alpha f=f$ pointwise on $E$.
\item[(H5)] There exists a $\mathcal{B}$-measurable function $v\in \mathcal{E}(\mathcal{U}_q)$ for some $q \geqslant 0$ such that $v<\infty$ 
and $[v\leqslant n],n\geqslant 1$, is $\tau(\mathcal{C})$-compact, 
$n\geqslant 1$,  is $\tau(\mathcal{C})$-compact; here $\tau(\mathcal{C})$ denotes the topology on $E$ generated by $\mathcal{C}$.
\end{enumerate} 

\begin{thm}\label{thm:cadlag}
If (H1)-(H5) hold, then there exists a c\`adl\`ag right process $X$ on $E$, endowed with the topology $\tau(\mathcal{C})$,  with resolvent $\mathcal{U}$. 
\end{thm}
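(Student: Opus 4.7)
The plan is to verify the hypotheses of Theorem \ref{thm 4.15} for $\mathcal{U}$ on $(E,\mathcal{B})$ endowed with $\tau(\mathcal{C})$, construct the right process on the saturation $E_1$, and then use the tightness encoded in (H5) both to exclude $E_1\setminus E$ and to guarantee left limits.

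First I would verify that $(E,\tau(\mathcal{C}))$ is a Lusin topological space with Borel $\sigma$-algebra equal to $\mathcal{B}$: by (H2) the topology $\tau(\mathcal{C})$ is Hausdorff, second countable and metrizable, and the decomposition $E=\bigcup_n [v\leq n]$ from (H5) makes it $\sigma$-compact, hence Lusin; the equality $\mathcal{B}=\mathcal{B}(\tau(\mathcal{C}))$ follows from a standard Blackwell argument using the countable separating family from (H2). Next, following the recipe in Remark \ref{rem 4.6}(ii), I would iteratively build a Ray cone $\mathcal{R}\subset\mathcal{C}\cap\mathcal{E}(\mathcal{U}_q)$ starting from that separating family: (H1) and (H3) keep the iterations inside $\mathcal{C}$, $1\in\mathcal{C}$ provides the constants, and (H4) combined with the resolvent equation ensures that the resulting $U_qf$'s still separate points. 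This supplies hypothesis (H) of the Appendix ($\mathcal{E}(\mathcal{U}_q)$ min-stable, containing constants, generating $\mathcal{B}$), and a Ray topology $\tau(\mathcal{R})$ which is coarser than $\tau(\mathcal{C})$. Theorem \ref{thm 4.15} then yields a right process $X^1$ on the saturation $(E_1,\mathcal{B}_1)$ with resolvent $\mathcal{U}^1$.

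The main step is to show that $E_1\setminus E$ is $\mathcal{U}^1$-polar. Extending $v$ to $E_1$ by $\widetilde v(\xi):=L^q(\xi,v)$ yields a $\mathcal{U}^1_q$-excessive function with $\widetilde v|_E=v$ and $[\widetilde v\leq n]\cap E=[v\leq n]$, which are $\tau(\mathcal{C})$-compact. The $\sigma$-compact exhaustion of $E$ by these sets allows one to show that any extreme $\mathcal{U}_q$-excessive probability with finite $v$-energy is of the form $\delta_x\circ U_q$ for some $x\in E$, so that $\widetilde v\equiv+\infty$ on $E_1\setminus E$. Since $(e^{-qt}\widetilde v(X^1_t))_{t\geq 0}$ is, under any $\mathbb{P}^x$ with $x\in E$, a non-negative right-continuous supermartingale starting at $v(x)<\infty$, it stays finite for all $t<\zeta$, forcing $X^1_t\in E$ almost surely. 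Hence $E_1\setminus E$ is polar and Theorem \ref{thm 4.15}(ii) delivers a right process $X$ on $E$ associated with $\mathcal{U}$.

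Finally, to obtain the càdlàg property in $\tau(\mathcal{C})$, right continuity in the Ray topology $\tau(\mathcal{R})$ is automatic by Theorem \ref{thm 4.6}. For left limits, $v(X_t)$ admits finite left limits on $[0,\zeta)$ by Doob's theorem, so on small intervals approaching any $t$ from the left the trajectory lies in a fixed compact $[v\leq n]$; on such a compact, for every $f\in\mathcal{R}$ the process $f(X_s)$ is a difference of bounded right-continuous supermartingales and hence has left limits, and separation of points in $[v\leq n]$ by the countable family in $\mathcal{R}$ produces $X_{t-}\in E$. Because $\tau(\mathcal{C})$ and $\tau(\mathcal{R})$ are both Hausdorff on $[v\leq n]$ with the first finer than the second and compactness available in both, the two topologies coincide on each level set, and the càdlàg property transfers from $\tau(\mathcal{R})$ to $\tau(\mathcal{C})$. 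The principal obstacle is the polarity step: it is the one place where the tightness hypothesis (H5) does genuine work, through the interplay between the compact exhaustion of $E$ and the representation of extreme normalized $\mathcal{U}_q$-excessive measures.
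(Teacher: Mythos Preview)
Your proof follows essentially the same architecture as the paper's: build a Ray cone with topology coarser than $\tau(\mathcal{C})$, pass to the saturation $E_1$ via Theorem~\ref{thm 4.15}, use the compact level sets $[v\le n]$ from (H5) to force the process to stay in $E$, and finally exploit that $\tau(\mathcal{C})$ and the Ray topology coincide on each $[v\le n]$ to upgrade right-continuity to c\`adl\`ag. The difference is one of packaging: the paper delegates each of these steps to external references (Proposition~2.2 of \cite{BeRo11a} for the Ray cone, Theorem~\ref{thm 4.13} for $\lim_n T_{E\setminus K_n}=+\infty$, Lemma~3.5 of \cite{BeBoRo06a} for the restriction to $E$, and Theorem~1.3 of \cite{BeBo05} for left limits), whereas you sketch the mechanisms directly. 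Your supermartingale argument for confinement to $E$ and your left-limits argument via $v(X_t)$ are exactly the ideas underlying those citations. The one place where your sketch is thin is the assertion that ``any extreme $\mathcal{U}_q$-excessive probability with finite $v$-energy is of the form $\delta_x\circ U_q$ for some $x\in E$'' (equivalently $\widetilde v\equiv+\infty$ on $E_1\setminus E$): this is precisely the content the paper imports from \cite{BeBoRo06a}, and it does require a genuine compactness argument in the Ray topology rather than just the $\sigma$-compact exhaustion, so be aware that this line hides the real work.
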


\begin{proof}
By Proposition 2.2 from \cite{BeRo11a} there exists a Ray cone $\mathcal{R}$ such that the Ray topology generated by $\mathcal{R}$ is smaller than $\tau(\mathcal{C})$.
Let $K_n=[v\leqslant n]$, $n\geqslant 1.$ 
It is an increasing sequence of $\tau(\mathcal{C})$-compact
and by Theorem \ref{thm 4.13} if follows that
$\lim_n T_n= + \infty$ $\mathbb{P}^x$-a.s. 
for every $x\in E$, where $T_n:= \inf \{ t>0 : X(t) \in E\setminus K_n \}.$ 
Notice that the Ray topology and $\tau(\mathcal{C})$ coincide on each compact set $K_n$.
Applying Lemma 3.5 from \cite{BeBoRo06a} and Theorem \ref{thm 4.15} it follows that there exists a right process $X$ with state space $E$ and $\mathcal{U}$ as associated resolvent.
By Theorem 1.3 from \cite{BeBo05} we conclude now that $X$ has c\`adl\`ag trajectories in the topology $\tau(\mathcal{C})$.
\end{proof}

\begin{rem}  Results related to the domination hypothesis (${\bf D_m}$) and to hypothesis (H5) may be found in \cite{BeTr11} and \cite{BeRo11b}, in terms of the tightness property for the associated capacities;
see also \cite{LyRo92}.
\end{rem} 

\vspace{2mm}


\noindent \textbf{Acknowledgements.} 
Funded by the Deutsche Forschungsgemeinschaft (DFG, German Research Foundation)-SFB 1283/2 2021-317210226. 
This work was supported by grants of the Ministry of Research, Innovation and Digitization, CNCS - UEFISCDI,
project number PN-III-P4-PCE-2021-0921, within PNCDI III for the first author, and
project number PN-III-P1-1.1-PD-2019-0780, within PNCDI~III for the second author.

\end{document}